\newcommand{\rd}{\mathrm d}
\newcommand{\cd}{\,\mathrm d}
\newtheorem{theorem}{Theorem}
\newtheorem{lemma}[theorem]{Lemma}
\theoremstyle{remark}
\newtheorem{remark}[theorem]{Remark}
\theoremstyle{definition}
\newtheorem{definition}[theorem]{Definition}
\numberwithin{theorem}{section}
\numberwithin{equation}{section}
\DeclareMathOperator{\supp}{supp}
\DeclareMathOperator{\grad}{grad}
\DeclareMathOperator*{\esslimsup}{ess\;lim\;sup}
\DeclareMathOperator*{\esssup}{ess\;sup}
\DeclareMathOperator{\Div}{div}
\newcommand{\energy}{\mathcal W}
\newcommand{\entropy}{\mathcal E}
\newcommand{\entropyproduction}{D\mathcal E}
\newcommand {\R} {\mathbb{R}}
\date{}
\title[Free energy Fokker-Planck]{Nonlinear Fokker-Planck equations with 
reaction as gradient flows of the free energy}
\author[S.~Kondratyev]{Stanislav Kondratyev}
\address[S.~Kondratyev]{CMUC, Department of
Mathematics, University of Coimbra, 3001-501 Coimbra, Portugal}{}
\email{kondratyev@mat.uc.pt}
\author[D.~Vorotnikov]{Dmitry Vorotnikov}
\address[D.~Vorotnikov]{CMUC, Department of
Mathematics, University of Coimbra, 3001-501 Coimbra, Portugal}{}
\email{mitvorot@mat.uc.pt}
\begin{document}
\maketitle
\begin{abstract}
We interpret a class of nonlinear Fokker-Planck equations with reaction as 
gradient flows over the space of Radon measures equipped with the recently 
introduced Hel\-linger-Kantorovich distance. The driving entropy of the gradient flow is not assumed to be geodesically convex or semi-convex.  We prove new generalized dissipation inequalities, which allow us to control the 
relative entropy by its production.  We establish the 
entropic exponential convergence of the trajectories of the flow to the 
equilibrium. Along with other applications, this result has an ecological interpretation as a 
trend to the ideal free distribution for a class of fitness-driven models of 
population dynamics. Our existence theorem for weak solutions under mild 
assumptions on the nonlinearity is new even in the absence of the reaction 
term.
\end{abstract}

\vspace{10pt}

Keywords: functional inequalities, optimal transport, Hellinger-Kantorovich distance, geodesic non-convexity

\vspace{10pt}

\textbf{MSC [2010] 26D10, 35Q84, 49Q20, 58B20}

\section{Introduction}

\subsection{Setting}
\label{ss:s}

Let~$\Omega$ be an open connected bounded domain in~$\mathbb R^d$ with 
sufficiently smooth boundary and let~$\nu$ be the outward unit normal 
along~$\partial \Omega$.  We are interested in nonnegative solutions of
\begin{align}
\partial_t u & = -\Div (u \nabla f) + fu, & (x, t) & \in \Omega \times (0, 
\infty),
\label{eq:p1}
\\
u \frac{\partial f}{\partial \nu} & = 0, & (x, t) & \in \partial \Omega \times 
(0, \infty),
\label{eq:p2}
\\
u & = u^0, & (x, t) & \in \Omega \times {0}.
\label{eq:p3}
\end{align}
Here $u$ is the unknown function, $f = f(x, u(x,t))$ is a known nonlinear 
function of $x$ and $u$, equation~\eqref{eq:p2} is the no-flux boundary 
condition and the initial data~$u^0$ are nonnegative. We refer to Section 
\ref{ss:mb} for the motivation and background.

When considering problem~\eqref{eq:p1}--\eqref{eq:p3}, we always make the 
following assumptions concerning the function~$f \colon \Omega \times (0, 
\infty) \to \mathbb R$:
\begin{gather}
f \in
C^2( \overline\Omega \times (0, \infty))
\cap L^1_\text{loc}(\overline \Omega \times [0, \infty))
\label{eq:f1a}
\\
uf, uf_x
\in C(\overline \Omega \times [0, \infty))
\label{eq:f1b}
\\
f_u < 0,
\label{eq:f2}
\\
\limsup_{u \to \infty} f(x, u) < 0 \quad \forall x \in \overline \Omega,
\label{eq:f3}
\\
\liminf_{u \to +0} f(x, u) > 0 \quad \forall x \in \overline \Omega
,
\label{eq:f4}
\\
|f(x, u)| + u |f_u(x, u)| + u |f_{xu}(x, u)| \le g(u)
\quad
\text{a.~a. } u > 0; \ g \in L^1_\text{loc}[0, \infty)
,
\label{eq:f5}
\\
(uf_x)\big|_{u = 0} = 0
.
\label{eq:f1d}
\end{gather}

When needed, we also assume that
\begin{align}
\text{either $f_x = 0$ for large $u$} & \text{\quad or\quad} \lim_{u \to 
\infty} f(x, u) = - \infty \ \forall x \in \overline \Omega
\label{eq:large-u}
\\
\text{either $f_x = 0$ for small $u$} & \text{\quad or\quad} \lim_{u \to +0} 
f(x, u) = \infty \ \forall x \in \overline \Omega
\label{eq:small-u}
\end{align}

\begin{remark}
We make comfortable assumptions about the smoothness of~$f$.  We do not insist 
that~$f$ should be defined for $u = 0$ so as not to exclude the interesting 
cases such as $f = -(\log u+V(x))$ (which corresponds to the linear 
Fokker-Planck equation, cf. \cite{F05,JKO}) and $f=u^\alpha-1$, $-1<\alpha<0$, 
(the fast diffusion, cf. \cite{Vaz07}).  However, we assume in~\eqref{eq:f1b} 
that the functions~$uf$ and~$uf_x$ admit continuous extensions to $\overline 
\Omega \times [0, \infty)$.  This ensures that the terms in~\eqref{eq:p1} make 
sense.  Moreover, we assume~\eqref{eq:f1d} to avoid certain complications with 
the entropy production to be defined below.
\end{remark}

\begin{remark}
Assumption~\eqref{eq:f2} is essential, it ensures the parabolicity 
of~\eqref{eq:p1}.  The equation may become degenerate or singular only if $u = 
0$ or $u$ is large.  The latter does not bother us as we only consider bounded 
solutions in what follows.
\end{remark}

\begin{remark}
Assumptions~\eqref{eq:f3}, \eqref{eq:f4} ensure the existence of a positive 
equilibrium, see below.
\end{remark}

\begin{remark}
Estimate~\eqref{eq:f5} ensures that the entropy and energy of the equation are 
well-defined and well-behaved.  Note that at least some restrictions on the 
growth of~$f_u$ as $u \to 0$ are inevitable, as the related very fast 
diffusion equation is known to behave abnormally \cite{Vaz-igolki}.
\end{remark}

\begin{remark}
Conditions~\eqref{eq:large-u} and~\eqref{eq:small-u} are convenient technical 
assumptions needed for $L^\infty$-bounds (hence for the existence theorem) and 
for controlling the energy for large~$u$ in the proof of Theorem~\ref{th:eep}.  
However, they are not necessary everywhere, so we explicitely mention them 
when the need arises.
\end{remark}

\begin{remark} The results of the paper remain valid if $\Omega$ is the periodic box $\mathbb T^d$. 
\end{remark}

It follows from~\eqref{eq:f2}--\eqref{eq:f4} that for any~$x \in \overline 
\Omega$ there exists a unique $m(x) > 0$ such that
\begin{equation*}
f(x, m(x)) = 0
.
\end{equation*}
Clearly, $m \in C^2(\overline \Omega)$.  It is a stationary solution 
of~\eqref{eq:p1}, \eqref{eq:p2}.  As we will see, all non-zero solutions of 
the problem converge to~$m$.

\subsection{Energy and entropy}
\label{ss:ee}

Now we will introduce the energy and entropy functionals for 
equation~\eqref{eq:p1} as well as the notion of weak solution.

Put
\begin{equation*}
\Phi(x, u) = - \int_0^u \xi f_u(x, \xi) \cd \xi,
\quad
\Psi(x, u) = \int_0^u \Phi(x, \xi) \cd \xi
.
\end{equation*}
It is easy to see that
\begin{equation*}
\Phi(x, 0) = \Psi(x, 0) = 0,
\ \Phi_u = - u f_u,
\ \Phi_x = - \int_0^u \xi f_{xu}(x, \xi) \cd \xi,
\ \Psi_u = \Phi.
\end{equation*}
Observe that both~$\Phi$ and~$\Psi$ are nonnegative and strictly increase with 
respect to~$u$.

Note that if $u$ is a nonnegative function of~$x$ and possibly of~$t$, an 
$L^\infty$-bound on~$u$ is translated into an $L^\infty$-bound on~$\Phi(\cdot, 
u(\cdot))$, i.~e., the superposition operator associated with~$\Phi$ is 
$L^\infty$-bounded.  The same is true of~$\Psi$.

Let $u$ be a classical solution of~\eqref{eq:p1}--\eqref{eq:p3}.  
Equation~\eqref{eq:p1} can be cast in the equivalent form
\begin{equation}
\label{eq:p1a}
\partial_t u = \Delta \Phi - \Div(\Phi_x + uf_x) + uf
,
\end{equation}
where we write~$\Phi$ for~$\Phi(x, u(x, t))$, etc.  Multiplying by $\Phi(x, 
u(x, t))$ and integrating over~$\Omega$, we obtain
\begin{equation}
\label{eq:energy-id}
\partial_t
\int_\Omega \Psi \cd x
= - \int_{\Omega} | \nabla \Phi |^2 \cd x
+ \int_{\Omega} (\Phi_x + uf_x) \cdot \nabla \Phi \cd x
+ \int_{\Omega} uf\Phi \cd x
.
\end{equation}
We call the functional
\begin{equation*}
\energy(u) = \int_\Omega \Psi(x, u(x)) \cd x
\end{equation*}
the \emph{energy} of problem \eqref{eq:p1}--\eqref{eq:p3} and 
equation~\eqref{eq:energy-id}, the \emph{energy identity}.  Thus, \emph{any 
classical solution of~\eqref{eq:p1}--\eqref{eq:p3} satisfies the energy 
identity~\eqref{eq:energy-id}}.

For our purposes, the energy identity is useful because it allows us to 
control the integral $\iint_{Q_T} |\nabla \Phi |^2 \cd x \cd t$.  In 
particular, we can define the weak solution of \eqref{eq:p1}--\eqref{eq:p3} in 
a class of functions~$u$ such that $\Phi(\cdot, u(\cdot)) \in L^2(0, T; 
H^1(\Omega))$.  It is easier to exploit this assumption in the case of 
equation~\eqref{eq:p1a}.  Thus, we define the weak solution as follows:

\begin{definition}
Let $u^0 \in L^\infty(\Omega)$.  A function~$u \in L^\infty(Q_T)$ is called a 
\emph{weak solution} of~\eqref{eq:p1}--\eqref{eq:p3} on $[0, T]$ if 
$\Phi(\cdot, u(\cdot)) \in L^2(0, T; H^1(\Omega))$ and
\begin{equation}
\label{eq:def1}
\int_0^T
\int_\Omega
(u \partial_t \varphi + ( - \nabla \Phi + \Phi_x + u f_x) \cdot \nabla \varphi 
+ f u \varphi) \cd x \cd t
=
\int_\Omega u^0(x) \varphi(x, 0) \cd x
\end{equation}
for any function $\varphi \in C^1(\overline \Omega \times [0, T])$ such that 
$\varphi(x, T) = 0$.
A function $u \in L^\infty_\text{loc}([0, \infty); L^\infty(\Omega))$ is 
called a \emph{weak solution} of \eqref{eq:p1}--\eqref{eq:p3} on $[0, \infty)$ 
if for any~$T > 0$ it is a weak solution on~$[0, T]$.
\end{definition}

Now, let us address the entropy of the problem.  Define
\begin{equation*}
E(x, u) = - \int_{m(x)}^u f(x, \xi) \cd \xi
.
\end{equation*}
It follows from~\eqref{eq:f5} that~$E$ is well-defined and continuous on 
$\overline \Omega \times [0, \infty)$.  As $f$ decreases with respect to~$u$ 
and $f(x, m(x)) = 0$, it is clear that $E \ge 0$ and $E(x, u) = 0$ if and only if 
$u = m(x)$.  The relative \emph{entropy} of equation~\eqref{eq:p1} is the 
functional
\begin{equation} \label{eq:entr}
\entropy (u) = \int_\Omega E(x, u(x)) \cd x
.
\end{equation}
Observe that it is well-defined at least for $u \in L^\infty_+(\Omega)$ as the 
superposition operator $u \mapsto E(\cdot, u(\cdot))$ is bounded in the spaces 
$L^\infty_+ \to L^\infty_+$.

A straightforward computation shows that for a positive classical solution 
of~\eqref{eq:p1}--\eqref{eq:p3} we have
\begin{equation}
\label{eq:entropy-diss-class}
\partial_t \entropy(u) = - \int_\Omega u(f^2 + | \nabla f |^2) \cd x
.
\end{equation}
Equation~\eqref{eq:entropy-diss-class} is called the \emph{entropy dissipation 
identity} and the integral on the right-hand side 
of~\eqref{eq:entropy-diss-class} is called the~\emph{entropy production}.  
However, the term $\int_\Omega u|\nabla f|^2 \cd x$ may make no sense for 
vanishing or non-smooth~$u$.  In order to generalise the definition of the 
entropy production, we use the identity
\begin{equation*}
u | \nabla f|^2 = \frac{1}{u} | -\nabla \Phi + \Phi_x + uf_x |^2 \quad (u > 0)
.
\end{equation*}
Given a function $u \in L_+^\infty(\Omega)$ such that $\Phi(\cdot, u(\cdot)) 
\in H^1(\Omega)$, the right-hand side of the last identity is a nonnegative 
measurable function on $[u > 0]$, so we can define the entropy production for 
such functions by the formula
\begin{equation*}
\entropyproduction(u) = \int_\Omega uf^2 \cd x
+ \int_{[ u > 0]} \frac 1 u | - \nabla \Phi + \Phi_x + uf_x |^2 \cd x
,
\end{equation*}
where the second integral on the right-hand side may be infinite. Thus, we see 
that \emph{any positive classical solution of~\eqref{eq:p1}--\eqref{eq:p3} 
satisfies the entropy dissipation identity}
\begin{equation}
\label{eq:entropy-id}
\partial_t \entropy(u) = - \entropyproduction(u)
.
\end{equation}

As usual, in the case of weak solutions we establish not the 
identities~\eqref{eq:energy-id} and \eqref{eq:entropy-id} but rather 
corresponding inequalities, viz.\ the \emph{energy inequality}
\begin{equation}
\label{eq:energy}
\partial_t
\energy(u)
\le
\int_{\Omega}
\big (
- | \nabla \Phi |^2
+ (\Phi_x + uf_x) \cdot \nabla \Phi
+ uf\Phi
\big )
\cd x
\end{equation}
and the \emph{entropy dissipation inequality}
\begin{equation}
\label{eq:entropy}
\partial_t \entropy(u)
\le - \entropyproduction(u)
.
\end{equation}
For functions $u \in L_+^\infty(\Omega)$ such that $\Phi \in L^2(0, T; 
H^1(\Omega))$ we understand \eqref{eq:energy} and \eqref{eq:entropy} in the 
sense of measures, i.~e., that for any smooth nonnegative compactly supported 
function $\chi \colon (0, T) \to \mathbb R$ we respectively have
\begin{gather*}
- \int_0^T \chi'(t) \energy(u) \cd t
\le \iint_{Q_T}
\chi(t)\big(- | \nabla \Phi |^2
+ (\Phi_x + uf_x) \cdot \nabla \Phi
+ uf\Phi
\big)
\cd x \cd t
,
\\
\int_0^T \chi'(t) \entropy(u) \cd t
\ge
\int_0^T \chi(t) \entropyproduction(u) \cd t
.
\end{gather*}
If~\eqref{eq:entropy} holds in the sense of measures, the derivative 
$\partial_t \entropy(u)$ is a nonpositive distribution and hence a measure, 
while the entropy $\entropy(u)$ itself a.~e.\ coincides with a non-increasing 
function.

An important question is whether the entropy can be controlled by the entropy 
production, since this would imply the exponential stability of the 
equilibrium.  It turns out that this is true
provided that the $L^1$-norm of $u$ is bounded away from~$0$.  Specifically, we 
have
\begin{theorem}[Entropy-entropy production inequality]
\label{th:eep}
Suppose that~$f$ satisfies~\eqref{eq:f1a}--\eqref{eq:f1d} as well 
as~\eqref{eq:large-u}.  Let $U \subset L_+^\infty(\Omega)$ be a set of 
functions such that for any $u \in U$, we have $\Phi(\cdot, u(\cdot)) \in 
H^1(\Omega)$ and
\begin{equation}
\label{eq:eep-a}
\inf_{u \in U} \| u \|_{L^1(\Omega)} > 0
.
\end{equation}
Then there exists $C_U$ such that
\begin{equation}
\label{eq:eep-b}
\entropy(u) \le C_U \entropyproduction(u) \quad (u \in U)
.
\end{equation}
\end{theorem}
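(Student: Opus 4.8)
The plan is to split $\Omega=[u\ge\delta]\cup[u<\delta]$ for a small parameter $\delta>0$, to be fixed at the very end in terms of $a:=\inf_{u\in U}\|u\|_{L^1(\Omega)}>0$: on $[u\ge\delta]$ I would bound the entropy density $E(x,u(x))$ pointwise by the entropy production density, and on $[u<\delta]$, where $E$ is uniformly bounded, it is enough to bound the measure $|[u<\delta]|$ by the entropy production.

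\emph{Pointwise estimate on $[u\ge\delta]$.} I would first prove that for every $\delta>0$ there is $C_1(\delta)$ with $E(x,u)\le C_1(\delta)\,uf(x,u)^2$ for all $x\in\overline\Omega$, $u\ge\delta$. By \eqref{eq:f3} and a Dini-type argument over the compact set $\overline\Omega$ there are $u_0,\ell_0>0$ with $-f(x,u)\ge\ell_0$ whenever $u\ge u_0$; for $u\ge u_0$ one has $E(x,u)=\int_{m(x)}^u(-f(x,\xi))\cd\xi\le(u-m(x))(-f(x,u))\le u(-f(x,u))$, while $uf(x,u)^2=u(-f(x,u))^2$, so the ratio is $\le1/\ell_0$. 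On the compact range $\delta\le u\le u_0$ one writes $E(x,u)=\int_u^{m(x)}f(x,\xi)\cd\xi$ and $f(x,u)=\int_{m(x)}^uf_u(x,\xi)\cd\xi$ and uses $f\in C^2$ together with $f_u<0$ to see that $E(x,u)/(uf(x,u)^2)$ extends continuously across the zero set $\{u=m(x)\}$, hence is bounded there. Integrating over $[u\ge\delta]$ and using $\entropyproduction(u)\ge\int_\Omega uf^2\cd x$ gives $\int_{[u\ge\delta]}E(x,u(x))\cd x\le C_1(\delta)\,\entropyproduction(u)$. (This is also where \eqref{eq:large-u} is invoked when the large-$u$ behaviour must be controlled further.)

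\emph{Measure estimate on $[u<\delta]$.} Since the superposition operator $u\mapsto E(\cdot,u(\cdot))$ is $L^\infty_+\to L^\infty_+$ bounded, $\bar E(\delta):=\sup_{x\in\overline\Omega,\ 0\le\xi\le\delta}E(x,\xi)<\infty$, so it remains to show $|[u<\delta]|\le C_2\,\entropyproduction(u)$. By \eqref{eq:f4} and a compactness argument there are $\delta_0,c_0>0$ with $f(x,u)\ge c_0$ for $x\in\overline\Omega$, $0<u\le\delta_0$; I would take $\delta\le\min\{\delta_0,a/(2|\Omega|)\}$, which forces $\int_{[u\ge\delta]}u\cd x\ge a/2$. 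For $0<\epsilon<\delta$ split $[u<\delta]=[\epsilon\le u<\delta]\cup[u<\epsilon]$. On the first set $uf(x,u(x))^2\ge\epsilon c_0^2$, so $|[\epsilon\le u<\delta]|\le(\epsilon c_0^2)^{-1}\entropyproduction(u)$. For $[u<\epsilon]$ I would exploit that $\Phi(\cdot,u(\cdot))\in H^1(\Omega)$: a bounded truncation $w$ of $\Phi(\cdot,u(\cdot))$ (or of $f(\cdot,u(\cdot))$) at a suitable level is then in $H^1(\Omega)$, equals a constant on $[u<\epsilon]$ once $\epsilon$ is small, and, because its ``active'' region forces $u$ to be bounded below, satisfies $\int_\Omega|\nabla w|^2\cd x\lesssim\entropyproduction(u)$ via the identity $u|\nabla f|^2=\frac1u|-\nabla\Phi+\Phi_x+uf_x|^2$. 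Either $\entropyproduction(u)$ is bounded below by a fixed positive constant (depending only on $a$ and $f$), in which case $|[u<\epsilon]|\le|\Omega|\lesssim\entropyproduction(u)$ trivially; or $\entropyproduction(u)$ is small, and then the uniform version of \eqref{eq:f3} applied to $\int uf^2$ forces $u$ below the fixed level $u_0$ off a set of small measure that cannot carry all the mass, so the $L^1$-bound produces a level set of $w$ of definite measure on which $w$ is far from its value on $[u<\epsilon]$; a Poincar\'e/isoperimetric-type inequality then bounds $|[u<\epsilon]|$ by $\|\nabla w\|_{L^2}^2$, hence by $\entropyproduction(u)$. Summing the two pieces gives $|[u<\delta]|\le C_2\,\entropyproduction(u)$ and $\int_{[u<\delta]}E(x,u(x))\cd x\le\bar E(\delta)C_2\,\entropyproduction(u)$.

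Adding the two regions yields $\entropy(u)\le C_U\,\entropyproduction(u)$ with $C_U:=C_1(\delta)+\bar E(\delta)C_2$, and $\delta$, $\epsilon$ depend only on $a$, $\Omega$, $f$, so $C_U$ depends only on $U$ through $a$. The genuinely delicate step — the ``isoperimetric-type functional inequality'' announced in the introduction — is the control of $|[u<\epsilon]|$: one must convert the $H^1$-regularity of $\Phi(\cdot,u(\cdot))$, the bound on the gradient part of the entropy production, and the $L^1$-lower bound into an estimate on the measure of $[u<\epsilon]$ whose constant does not deteriorate as $\|u\|_{L^\infty(\Omega)}\to\infty$. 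The remaining ingredients — the case distinction on the size of $\entropyproduction(u)$, truncation of $u$ at a high level (whose contribution is absorbed by the pointwise large-$u$ bound of the second paragraph), and the elementary estimates on $\Phi$, $E$, $f$ furnished by \eqref{eq:f1a}--\eqref{eq:f1d} — are what make the argument go through, but assembling them cleanly is the main work.
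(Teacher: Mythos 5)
Your high-level plan is in the right spirit — the paper also reduces the entropy-entropy production inequality to a pointwise bound for moderate/large $u$ plus an isoperimetric-type estimate involving the gradient part of the production — but the step you yourself flag as ``the genuinely delicate step'' is in fact left as a gap, and the sketch you give for it does not close.

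The pointwise estimate on $[u\ge\delta]$ is essentially fine and parallels the verification of hypothesis~\eqref{eq:Efg3} in Theorem~\ref{th:eepab}, except that you should note it is the condition~\eqref{eq:large-u}, not~\eqref{eq:f3} alone, that gives the \emph{uniform-in-$x$} lower bound $-f\ge\ell_0$ for $u\ge u_0$ (the Dini argument does not apply directly since the pointwise limit is not continuous, and~\eqref{eq:f3} is only a pointwise $\limsup$).

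The real problem is the estimate $|[u<\varepsilon]|\lesssim\entropyproduction(u)$. Your argument wants a truncation $w$ of $f(\cdot,u(\cdot))$ that is constant on $[u<\varepsilon]$, has $\|\nabla w\|_{L^2}^2\lesssim\entropyproduction(u)$, and takes a definitely different value on a set of uniformly positive measure, so that a Poincar\'e inequality applies. But the existence of that second reference set is not established. The $L^1$-bound plus smallness of $\int uf^2$ gives you a set of definite measure where $a/(4|\Omega|)\le u\le u_0$, but on that set $f$ can still be anywhere in the range $[f(x,u_0),\,f(x,a/(4|\Omega|))]$, which overlaps the truncation level $c_0$; nothing forces $w$ to be far from its value on $[u<\varepsilon]$ there. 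You would really need a set of definite measure where $u$ is close to $m(x)$ (so that $f\approx 0$), and that is exactly the nontrivial content that the paper extracts by a contradiction argument. Additionally, a straight Poincar\'e inequality controls $\|w-\bar w\|_{L^2}^2$, and turning that into a one-sided bound on $|[u<\varepsilon]|$ requires knowing that $\bar w$ stays away from the value of $w$ on $[u<\varepsilon]$ \emph{uniformly over $U$}, which again hinges on the unresolved existence of the reference set. There is also a regularity gap you do not address: $u$ is only in $L^\infty_+(\Omega)$ with $\Phi(\cdot,u(\cdot))\in H^1(\Omega)$, so the truncated $w=\mathrm{trunc}(f(\cdot,u(\cdot)))$ is not obviously in $H^1$, and some care (as in the paper's density argument mapping $\Phi$-approximations back to smooth positive $u_n$) is needed before any coarea/level-set manipulation is legitimate.

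In contrast, the paper proves the key estimate (Theorem~\ref{th:yai}) by contradiction: assuming a violating sequence $\{u_n\}$, it shows $\{E(\cdot,u_n(\cdot))\}$ is bounded in $L^1$ and $u_n\to 0$ in measure by a three-case analysis of $\lim|[u_n\le\varepsilon_n]|$, using the coarea formula for $f_n$ and the \emph{relative isoperimetric inequality} $P(A;\Omega)\ge c_\Omega|A|^{(d-1)/d}$ applied to the level sets $[f_n>t]$ with $t\in(\alpha,\beta)$. The key geometric input is that level sets of $f_n$ at intermediate heights have their boundary entirely inside the region where $u_n$ is bounded away from $0$, so the coarea integral is controlled by $\varepsilon_n^{-1}\int u_n|\nabla f_n|^p$. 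Your Poincar\'e sketch misses this finer level-set/perimeter structure, which is why it cannot see that the case $|[u_n\le\varepsilon_n]|\to\mu_0\in(0,|\Omega|)$ is also excluded. In short: your decomposition is a reasonable starting point and correctly identifies where the difficulty lies, but the isoperimetric step is not a Poincar\'e inequality in disguise, and the sketch does not amount to a proof.
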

Theorem~\ref{th:eep} is a consequence of a fairly general functional 
inequality established in Section~\ref{ss:s2}.

\begin{theorem}[Existence of weak solutions]
\label{th:ex}
Suppose that~$f$ satisfies~\eqref{eq:f1a}--\eqref{eq:f1d} as well 
as~\eqref{eq:large-u} and~\eqref{eq:small-u}.  Then for any~$u^0 \in 
L^\infty_+(\Omega)$ there exists a nonnegative weak solution $u \in 
L^\infty(\Omega \times (0, \infty))$ of problem \eqref{eq:p1}--\eqref{eq:p3} 
enjoying the following properties:
\begin{enumerate}
\item (upper $L^\infty$-bound)
\begin{equation}
\label{eq:linfty}
\| u \|_{L^\infty(\Omega \times (0, \infty))}
\le
\inf
\left\{
\xi \ge 0 \colon \sup_{x \in \Omega} f(x, \xi)
\le - \esssup_{x \in \Omega} f^-(x, u^0(x))
\right\}
;
\end{equation}
\item $u$ satisfies the energy inequality~\eqref{eq:energy} in the sense of 
measures and
\begin{equation}
\label{eq:init-energy}
\esslimsup_{t \to +0} \energy(u(t)) \le \energy(u^0);
\end{equation}
\item
$u$ satisfies the entropy dissipation inequality~\eqref{eq:entropy} in the 
sense of measures and
\begin{equation}
\label{eq:init-entropy}
\esssup_{t > 0} \entropy(u(t)) \le \entropy(u^0);
\end{equation}

\item (lower $L^1$-bound)
\begin{equation}
\label{eq:lower}
\| u(t) \|_{L^1(\Omega)} \ge \| \min(u^0, m) \|_{L^1(\Omega)} \quad 
\text{a.~a.\ } t > 0
.
\end{equation}
\end{enumerate}
\end{theorem}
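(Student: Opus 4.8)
The plan is to construct the weak solution by a regularization (vanishing viscosity) procedure together with a compactness argument, and to recover the quantitative properties \eqref{eq:linfty}--\eqref{eq:lower} from $\varepsilon$-uniform a priori estimates that survive the limit. Using \eqref{eq:large-u} and \eqref{eq:small-u}, for $\varepsilon\in(0,1)$ I would choose $f_\varepsilon\in C^2(\overline\Omega\times[0,\infty))$ and the associated $\Phi_\varepsilon$ that agree with $f$, $\Phi$ on a large $u$-range, are free of the singularity/degeneracy at $u=0$, have bounded coefficients on bounded $u$-ranges, retain the sign structure \eqref{eq:f2}--\eqref{eq:f4}, and have $f_{\varepsilon,x}$ vanishing for $u$ large; and mollify the data to $u^0_\varepsilon\in C^\infty(\overline\Omega)$ with $\varepsilon\le u^0_\varepsilon$, $\|u^0_\varepsilon\|_{L^\infty}\le\|u^0\|_{L^\infty}+\varepsilon$, $u^0_\varepsilon\to u^0$ in every $L^q$. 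Then I would solve the uniformly parabolic quasilinear problem
\begin{equation*}
\partial_t u=\Delta\Phi_\varepsilon+\varepsilon\Delta u-\Div(\Phi_{\varepsilon,x}+uf_{\varepsilon,x})+uf_\varepsilon,
\end{equation*}
with the no-flux condition $u\,\partial_\nu f_\varepsilon-\varepsilon\,\partial_\nu u=0$ and datum $u^0_\varepsilon$; by standard quasilinear parabolic theory (Ladyzhenskaya--Solonnikov--Uraltseva, or Amann) this has a unique global smooth solution $u_\varepsilon$, and the maximum principle gives $u_\varepsilon>0$ since the reaction term and the fluxes vanish on $\{u=0\}$ (cf.\ \eqref{eq:f1d}).

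Next come the $\varepsilon$-uniform bounds. For the $L^\infty$ bound, since $f_{\varepsilon,x}=0$ for large $u$, large constants are supersolutions once $f_\varepsilon(\cdot,\xi)\le0$; comparing $u_\varepsilon$ with suitable constants and with $m$ (stationary for the limit problem, hence a supersolution of the regularized one up to an $O(\varepsilon)$ correction) yields $\|u_\varepsilon\|_{L^\infty(\Omega\times(0,\infty))}\le R_\varepsilon$ with $R_\varepsilon$ converging to the right-hand side of \eqref{eq:linfty} --- this is exactly where \eqref{eq:large-u} enters. Testing the regularized equation (in the form analogous to \eqref{eq:p1a}) with $\Phi_\varepsilon(\cdot,u_\varepsilon)$ and, separately, with $-f_\varepsilon(\cdot,u_\varepsilon)$ gives the regularized energy and entropy identities; on the bounded range $[0,R_\varepsilon]$ all coefficients are bounded, so after a Young inequality absorbing the cross term one obtains, uniformly in $\varepsilon$ and for each $T$,
\begin{gather*}
\iint_{Q_T}|\nabla\Phi_\varepsilon|^2+\varepsilon\iint_{Q_T}|\nabla u_\varepsilon|^2\le C(T),\qquad \esssup_{t>0}\entropy_\varepsilon(u_\varepsilon(t))\le\entropy_\varepsilon(u^0_\varepsilon),\\
\iint_{Q_T}\Bigl(u_\varepsilon f_\varepsilon^2+\tfrac{1}{u_\varepsilon}\bigl|-\nabla\Phi_\varepsilon+\Phi_{\varepsilon,x}+u_\varepsilon f_{\varepsilon,x}\bigr|^2\Bigr)\le\entropy_\varepsilon(u^0_\varepsilon)\le C.
\end{gather*}
For the lower $L^1$ bound, let $w_\varepsilon$ solve the regularized problem with datum $\min(u^0_\varepsilon,m)$; comparison gives $w_\varepsilon\le u_\varepsilon$ and (up to the $O(\varepsilon)$ correction) $w_\varepsilon\le m$, hence $f_\varepsilon(\cdot,w_\varepsilon)\ge0$ and $\frac{\rd}{\rd t}\|w_\varepsilon\|_{L^1}=\int_\Omega f_\varepsilon(x,w_\varepsilon)w_\varepsilon\ge-o(1)$, so $\|u_\varepsilon(t)\|_{L^1}\ge\|\min(u^0_\varepsilon,m)\|_{L^1}-o(1)$.

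Finally, compactness and the limit. From the equation and the above, $\partial_t u_\varepsilon$ is bounded in $L^2(0,T;(H^1(\Omega))')$, $\Phi_\varepsilon(\cdot,u_\varepsilon)$ in $L^2(0,T;H^1(\Omega))$, and $u_\varepsilon$ in $L^\infty$; since $u\mapsto\Phi(x,u)$ is a strictly increasing continuous bijection of $[0,\infty)$ and $\Phi_\varepsilon\to\Phi$ locally uniformly, a nonlinear Aubin--Lions argument (Alt--Luckhaus type) gives, along a subsequence, $u_\varepsilon\to u$ in $L^q(Q_T)$ for all $q<\infty$ and a.e., with $0\le u\le$ the bound in \eqref{eq:linfty}. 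Then $\Phi_\varepsilon(\cdot,u_\varepsilon)\to\Phi(\cdot,u)$ in $L^q$, so $\Phi(\cdot,u)\in L^2(0,T;H^1)$ and $\nabla\Phi_\varepsilon\rightharpoonup\nabla\Phi(\cdot,u)$ weakly in $L^2(Q_T)$; likewise $\Phi_{\varepsilon,x}(\cdot,u_\varepsilon)\to\Phi_x(\cdot,u)$, $u_\varepsilon f_{\varepsilon,x}(\cdot,u_\varepsilon)\to uf_x(\cdot,u)$ and $u_\varepsilon f_\varepsilon(\cdot,u_\varepsilon)\to uf(\cdot,u)$ in $L^q$, where continuity of these superposition operators up to $u=0$ is guaranteed by \eqref{eq:f1b}, \eqref{eq:f1d}, \eqref{eq:f5}. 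Passing to the limit in the weak formulation of the regularized equation against $\varphi\in C^1(\overline\Omega\times[0,T])$ with $\varphi(\cdot,T)=0$ yields \eqref{eq:def1}. The inequalities \eqref{eq:energy} and \eqref{eq:entropy} in the sense of measures follow by testing the $\varepsilon$-identities with $\chi\ge0$ and taking $\liminf_{\varepsilon\to0}$, using $\energy_\varepsilon(u_\varepsilon(t))\to\energy(u(t))$, $\entropy_\varepsilon(u_\varepsilon(t))\to\entropy(u(t))$ and weak lower semicontinuity of $V\mapsto\iint\chi|V|^2$ and of the jointly convex $(u,V)\mapsto\iint\chi\,|V|^2/u$; the latter, with the uniform bound above, also forces $-\nabla\Phi(\cdot,u)+\Phi_x+uf_x=0$ a.e.\ on $\{u=0\}$, so $\entropyproduction(u)$ is finite. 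The bounds \eqref{eq:init-entropy}, \eqref{eq:init-energy} come from passing to the limit in $\entropy_\varepsilon(u_\varepsilon(t))\le\entropy_\varepsilon(u^0_\varepsilon)$ and $\energy_\varepsilon(u_\varepsilon(t))\le\energy_\varepsilon(u^0_\varepsilon)+Ct$, and \eqref{eq:lower} from the lower $L^1$ estimate.

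The delicate point --- the expected main obstacle --- is the behaviour near the vacuum set $\{u=0\}$: the coefficients $\Phi_x$, $uf_x$, $uf$, and above all the entropy production density, involve $f$ or $f_u$, which are singular as $u\to0$, so both the uniform control and the lower semicontinuity of $\entropyproduction$ rely essentially on the integrability built into \eqref{eq:f5} and on the compatibility condition \eqref{eq:f1d}; establishing that these superposition operators are continuous on $L^q$ up to the vacuum and that $\int_{[u_\varepsilon>0]}\tfrac{1}{u_\varepsilon}|\cdots|^2$ is lower semicontinuous under a.e.\ convergence with possibly degenerating $u_\varepsilon$ is the heart of the matter. A secondary difficulty is extracting the \emph{sharp} constant in \eqref{eq:linfty}, for which \eqref{eq:large-u} must be used carefully to neutralise the transport term $\Div(uf_x)$ in the large-$u$ regime.
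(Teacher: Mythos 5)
Your construction---vanishing viscosity $\varepsilon\Delta u$, a regularized fitness $f_\varepsilon$ smooth up to $u=0$, data lifted above $\varepsilon$, and an Aubin--Lions/Alt--Luckhaus compactness passage---is a genuinely different route from the paper's. The paper does not regularize the equation at all: it solves the \emph{original} equation classically for smooth strictly positive data (Lemma~\ref{lem:ex-smooth}), relying on comparison with the family $u_c$ defined by $f(x,u_c(x))\equiv c$ to keep the solution pinned away from $0$ and $\infty$ so the parabolicity never degenerates; it then proves an $L^1$-contraction (Lemmas~\ref{lem:leones}, \ref{lem:l1c}, \ref{lem:l1cr}) and obtains weak solutions for data bounded away from zero as limits of Cauchy sequences in $C([0,T];L^1(\Omega))$ (Theorem~\ref{lem:sp}); general nonnegative $u^0$ are then handled via the monotone approximation $u^0+\varepsilon_n$. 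The contraction route yields, essentially for free, uniqueness and a comparison principle for positive data, which your compactness argument does not provide. For the lower semicontinuity of the entropy production, your appeal to joint convexity of $(u,V)\mapsto|V|^2/u$ serves the same purpose as the paper's Lemma~\ref{lem:variant_banach_alaoglu_vector_fields}; those are essentially the same tool.

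The genuine gap is the sharp upper bound~\eqref{eq:linfty}. By Remark~\ref{rem:norm-uc}, its right-hand side equals $\|u_c\|_{L^\infty(\Omega)}$ with $c=-\esssup_x f^-(x,u^0(x))$, and $u_c$ is in general a nonconstant function of~$x$. The correct barrier is $u_c$ itself: $\nabla_x f(x,u_c(x))\equiv 0$ annihilates the transport term and $u_c f(x,u_c(x))=c\,u_c\le 0$ for $c\le 0$, so $u_c$ is a supersolution---this is precisely what the paper's restricted contraction (Lemma~\ref{lem:l1cr}) exploits. Constant barriers are supersolutions only at levels $\xi$ where $f_x(\cdot,\xi)\equiv 0$, which under~\eqref{eq:large-u} may force $\xi$ strictly larger than $\|u_c\|_{L^\infty(\Omega)}$ (and under the second alternative of~\eqref{eq:large-u} constants need not be supersolutions at any finite level); and $m$ dominates only those data with $u^0\le m$. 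So comparing with constants and with $m$ delivers a cruder $L^\infty$ bound, not~\eqref{eq:linfty}. To close the gap within your scheme you would need to verify that (a regularization of) the $u^0$-dependent barrier $u_c$ is a supersolution of the viscous, $f_\varepsilon$-regularized problem up to an $O(\varepsilon)$ error, and this interacts nontrivially with how $f_\varepsilon$ is chosen near the level $u_c$.
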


\begin{remark}
Theorem \ref{th:ex}, mutatis mutandis, is also valid in the case of the pure 
Fokker-Planck equation \eqref{eq:nfp}. Even in this case, our conditions on 
the nonlinearity $f$ are more relaxed than the ones available in the 
literature, see, e.g., \cite{AL83,BH86,K90A,K90B,FK95,Vaz07,CJM01,Bar16} and 
the references therein.
\end{remark}

\begin{remark}
In the general case, uniqueness of solutions cannot be expected due to the 
non-Lipschitz reaction term. However, our weak solutions are unique provided 
the initial data is bounded away from zero, see Theorem~\ref{lem:sp}. 
\end{remark}

\begin{remark}
Under the hypotheses of Theorem~\ref{th:ex}, the right-hand side 
of~\eqref{eq:linfty} is always finite (see Remark~\ref{rem:bnd}).  Moreover, 
if $u^0$ satisfies an estimate $\| u^0 \|_{L^\infty(\Omega)} \le a$, 
inequality~\eqref{eq:linfty} provides an estimate $\| u 
\|_{L^\infty(\Omega \times (0, \infty))} \le C_a$.
\end{remark}

The next theorem shows that the solutions that we have constructed 
exponentially converge to $m$. Note that \eqref{eq:small-u}
is not needed for the long-time convergence.
\begin{theorem}[Convergence to equilibrium]
\label{th:convergence}
Assume \eqref{eq:large-u} and suppose that a weak solution~$u$ 
of~\eqref{eq:p1}--\eqref{eq:p3} with the initial data $u^0\not\equiv 0$ 
satisfies the entropy dissipation inequality~\eqref{eq:entropy}, 
inequality~\eqref{eq:init-entropy}, and the lower $L^1$-bound~\eqref{eq:lower}. 
Then~$u$ exponentially converges to~$m$ in the sense of entropy:
\begin{equation}
\label{eq:convergence-a}
\entropy(u(t)) \le \entropy(u^0) \mathrm{e}^{-\gamma t} \quad \text{a.~a.\ } t 
> 0
,
\end{equation}
where $\gamma > 0$ can be chosen uniformly over initial data satisfying
\begin{equation}
\label{eq:convergence-b}
\| \min(u^0, m) \|_{L^1(\Omega)} \ge c
\end{equation}
with some $c > 0$.
\end{theorem}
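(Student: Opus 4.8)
The plan is to combine the entropy--entropy production inequality of Theorem~\ref{th:eep} with the entropy dissipation inequality~\eqref{eq:entropy} and run a Gronwall-type argument; the role of the lower $L^1$-bound~\eqref{eq:lower} (and of the hypothesis $u^0\not\equiv 0$, which makes it nontrivial) is precisely to keep Theorem~\ref{th:eep} applicable along the whole trajectory with a constant that does not degenerate. It suffices to prove~\eqref{eq:convergence-a} with a rate $\gamma=\gamma(c)>0$ for every solution whose data obeys~\eqref{eq:convergence-b}: indeed, any $u^0\not\equiv 0$ obeys~\eqref{eq:convergence-b} with $c=\|\min(u^0,m)\|_{L^1(\Omega)}>0$, since $m>0$ on $\overline\Omega$.

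So fix $c>0$ and put
\[
U_c = \bigl\{\, v\in L^\infty_+(\Omega):\ \Phi(\cdot,v(\cdot))\in H^1(\Omega),\ \|v\|_{L^1(\Omega)}\ge c \,\bigr\}.
\]
This class satisfies the hypotheses of Theorem~\ref{th:eep}: its members have $\Phi(\cdot,v)\in H^1(\Omega)$ by construction, and $\inf_{v\in U_c}\|v\|_{L^1(\Omega)}\ge c>0$. Hence there is $C=C(c)<\infty$ such that $\entropy(v)\le C\,\entropyproduction(v)$ for all $v\in U_c$. On the other hand, by the definition of weak solution $\Phi(\cdot,u(\cdot))\in L^2(0,T;H^1(\Omega))$ for every $T>0$, so $u(t)\in L^\infty_+(\Omega)$ (recall only nonnegative solutions are considered) and $\Phi(\cdot,u(t))\in H^1(\Omega)$ for a.e.\ $t>0$; combining this with~\eqref{eq:lower} and~\eqref{eq:convergence-b} we get $\|u(t)\|_{L^1(\Omega)}\ge\|\min(u^0,m)\|_{L^1(\Omega)}\ge c$, so $u(t)\in U_c$, for a.e.\ $t>0$. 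Therefore
\[
\entropyproduction(u(t))\ \ge\ \tfrac1C\,\entropy(u(t))\qquad\text{for a.e.\ }t>0.
\]

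Insert this into~\eqref{eq:entropy} read in the sense of measures: for every smooth nonnegative compactly supported $\chi$ on $(0,\infty)$,
\[
\int_0^\infty \chi'(t)\,\entropy(u(t)) \cd t\ \ge\ \int_0^\infty \chi(t)\,\entropyproduction(u(t)) \cd t\ \ge\ \frac1C\int_0^\infty \chi(t)\,\entropy(u(t)) \cd t,
\]
which means that the distribution $\partial_t\bigl(\mathrm{e}^{t/C}\entropy(u(t))\bigr)=\mathrm{e}^{t/C}\bigl(\partial_t\entropy(u(t))+\tfrac1C\entropy(u(t))\bigr)$ is nonpositive. As observed after~\eqref{eq:entropy}, $\entropy(u(\cdot))$ coincides a.e.\ with a non-increasing function; hence so does $t\mapsto\mathrm{e}^{t/C}\entropy(u(t))$. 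By~\eqref{eq:init-entropy}, $\entropy(u(s))\le\entropy(u^0)<\infty$ for a.e.\ $s>0$ (finiteness because $u^0\in L^\infty_+(\Omega)$); letting $s\to0^+$ along such $s$ and using the monotonicity just established, we obtain $\mathrm{e}^{t/C}\entropy(u(t))\le\entropy(u^0)$ for a.e.\ $t>0$. This is~\eqref{eq:convergence-a} with $\gamma=1/C$, and since $C$ depends on the solution only through $c$, the rate $\gamma$ is uniform as asserted. Note that~\eqref{eq:large-u} enters only via Theorem~\ref{th:eep} and~\eqref{eq:small-u} is never used.

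The only delicate points are (i) ensuring that the constant in the entropy--entropy production estimate depends on $u$ solely through the lower $L^1$-bound --- which is why Theorem~\ref{th:eep} is applied to the fixed class $U_c$ rather than to the orbit $\{u(t):t>0\}$ --- and (ii) executing the Gronwall step within the measure-valued framework, in particular checking that $\mathrm{e}^{t/C}\entropy(u(t))$ admits a non-increasing representative and that its value at $t=0$ is handled through~\eqref{eq:init-entropy} in the absence of continuity at the origin. Both are routine; the substantive content of the theorem is entirely carried by Theorem~\ref{th:eep}, which we take as established.
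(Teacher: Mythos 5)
Your proof is correct and follows the same route as the paper: apply Theorem~\ref{th:eep} to the class $U_c$ of admissible functions with $L^1$-norm at least $c$, combine the resulting entropy--entropy production inequality with the entropy dissipation inequality, and run a Gronwall argument in the distributional sense, using \eqref{eq:init-entropy} to anchor the bound at $t=0^+$. In fact your write-up is tidier than the paper's at two points: you correctly use the $L^1$-norm (not the $L^\infty$-norm) in the definition of the admissible class, matching the hypothesis~\eqref{eq:eep-a} of Theorem~\ref{th:eep}, and you correctly obtain the rate $\gamma = 1/C$, whereas the paper's displayed argument has the constant inverted (it writes $\partial_t\entropy(u)\le -C_U\entropy(u)$ and $\gamma=C_U$ where it should be $-C_U^{-1}\entropy(u)$ and $\gamma=C_U^{-1}$).
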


Theorems~\ref{th:eep}, \ref{th:ex}, and \ref{th:convergence} are proved in 
Section~\ref{ss:nid}.

\subsection{Motivation and background} \label{ss:mb}  

The nonlinear Fokker-Planck equation \begin{equation} \label{eq:nfp}
\partial_t u = -\Div (u \nabla (f(x,u))) \end{equation} is intended to express 
the behaviour of stochastic systems coming from various branches of physics, 
chemistry and biology, see \cite{F05,Ts09,J16,BLMV}. In order to take into 
account the creation and annihilation of mass, the general 
drift-diffusion-reaction equation \eqref{eq:p1} was suggested in \cite{F04}. 
In the considerations of \cite{F04} (cf. also \cite{F05}), the crucial role is 
played by the free energy functional that up to an additive constant 
coincides with our relative entropy functional $\mathcal E$ from 
\eqref{eq:entr}. We opt for this change of terminology (though for 
thermodynamists the free energy involves the (physical) entropy, the internal 
energy, and the temperature) because in mathematical analysis it is convenient 
to refer to the basic Lyapunov functional of a system as the entropy, cf. 
\cite[p. 270]{villani03topics}.

On the other hand, equation \eqref{eq:p1} is a general nonlinear model for the 
spatial dynamics of a population that is tending to achieve the \emph{ideal 
free distribution} \cite{FC69,fr72} (the distribution that happens if 
everybody is free to choose its location) in a heterogeneous environment. The 
dispersal strategy is determined by a local intrinsic characteristic of 
organisms called \emph{fitness} (see, e.g., \cite{cosner05,cos13}). The 
fitness manifests itself as a growth rate, and simultaneously affects the 
dispersal as the species move along its gradient towards the most favorable 
environment. In \eqref{eq:p1}, $u(x,t)$ is the density of organisms, and 
$f(x,u)$ is the fitness.  The equilibrium $u(x)\equiv m(x)$ when the fitness 
is constantly zero corresponds to the ideal free distribution.  The original 
model \cite{mc90,cosner05} assumes a linear logistic fitness \begin{equation} 
f=m(x)-u \label{eq:lfit} \end{equation} but in general it can be any nonlinear 
function of the spatial variable and the density, cf. \cite{cos13}. The 
assumptions \eqref{eq:f2}, \eqref{eq:f3}, \eqref{eq:f4} are natural as they 
simply mean that the fitness is decreasing with respect to the population 
density (as the resources are limited), being positive for very small 
densities and negative for very large densities. Our Theorem 
\ref{th:convergence} indicates that the populations converge to the ideal free 
distribution with an exponential rate. 

The existence of weak solutions for the fitness-driven dispersal 
model~\eqref{eq:p1}--\eqref{eq:p3} with the logistic fitness \eqref{eq:lfit} 
was shown in  \cite{CW13}, and the entropic exponential convergence to $m$ was 
established in \cite{KMV16A}. The same kind of results for cross-diffusion 
systems involving several interacting populations (with logistic fitnesses) 
can be found in \cite{KMV16B}. Related two-species models were investigated in 
\cite{ccl13,ltw14}, where one population uses the fitness-driven dispersal 
strategy and the other diffuses freely or does not move at all. A system of two 
interacting populations with a particular nonlinear fitness function has 
recently been considered in \cite{XBF17}, which is the only existing 
mathematical treatment of a non-logistic fitness model that we are aware of.

But perhaps our main motivation to study \eqref{eq:p1} is that it is a 
gradient flow of the entropy functional $\mathcal{E}$ with respect to the 
intriguing recently introduced distance on the space of Radon measures, which 
is related to the \emph{unbalanced} optimal transport (i.e., failing to 
preserve the total transported mass), and that is referred to as the 
Hellinger-Kantorovich distance or the Wasserstein-Fisher-Rao distance 
\cite{KMV16A,peyre_1_2015,LMS_big_2015,LMS16,CPSV18}. This distance endows   the set 
of Radon measures with a formal (infinite dimensional) Riemannian metric 
$\langle\cdot,\cdot\rangle$, and provides first- and second-order differential 
calculus \cite{KMV16A} in the spirit of Otto 
\cite{otto01,villani03topics,villani08oldnew}. In particular, one can compute 
the metric gradients of the functionals of the form $$\mathcal 
F(u)=\int_\Omega F(x,u(x))\cd x$$ by the formula
\begin{equation}
\label{eq:formula_grad_d_scalar}
\grad \mathcal F(u)=-\Div\left(u\nabla \frac{\delta F}{\delta 
u}\right)+u\frac{\delta F}{\delta u},
\end{equation}
where $\frac{\delta F}{\delta u}=\partial_uF(x,u)$ stands for the first 
variation with respect to $u$ and $\nabla=\nabla_x$ is the usual gradient in 
space.  We refer to \cite{KMV16A} for further details and explanations. Since 
$f=-\partial_u E$, we can recast \eqref{eq:p1} as a gradient flow 
\begin{equation}
\partial_t u=-\grad \mathcal{E}(u).
\label{eq:gradf}
\end{equation}
The entropy dissipation identity \eqref{eq:entropy-diss-class}, which by the 
way was already known to Frank \cite{F04}, is then nothing but the archetypal 
property of  gradient flows
\begin{equation*}
\frac d {dt}\mathcal{E}(u)=-\langle\grad \mathcal{E}(u),\grad \mathcal{E}(u) 
\rangle_u.
\end{equation*}

In this connection, we recall that for the metric gradient flows like \eqref{eq:gradf}, the geodesic convexity of the driving entropy functional (or at least semi-convexity, i.e., $\lambda$-convexity with a negative constant $\lambda$) makes a difference \cite{otto01,AGS06,villani03topics,villani08oldnew,San17}. 
The presence of convexity allows one to apply minimizing movement schemes 
\cite{AGS06,JKO} to construct solutions to the gradient flow.  Moreover, 
$\lambda$-convexity with $\lambda$ strictly positive enables the Bakry-Emery 
procedure that usually yields the exponential convergence of the relative 
entropy to zero. Minimizing movement schemes for Hellinger-Kantorovich 
gradient flows of geodesically convex functionals and for related 
reaction-diffusion equations were suggested in \cite{MG16,GLM17}.  

Our entropy $\mathcal{E}$ is geodesically $(-1/2)$-convex with respect to the Hellinger-Kantorovich structure if $f=1-u^\alpha$, $\alpha>0$, but fails to be semi-convex for $f=u^\alpha-1$, $\alpha<0,$ and for $f=-\log u$ (the latter option corresponds to the interesting case of the Boltzmann entropy).  The spatial heterogeneity further complicates the situation. The quadratic (logistic) multicomponent entropy considered in \cite{KMV16B,KMV17} is not even semi-convex. All this can be observed by computing the Hessian of the entropy, cf. \cite[Section 3.4]{KMV16A}; the non-convexity of the Boltzmann entropy with respect to the Hellinger-Kantorovich metric was also mentioned in \cite{MG16,GLM17,LMS_big_2015,LMS16}. We refer to \cite{KV19A} for a more detailed discussion of examples of $f$ and the corresponding geodesic non-convexity. However, Santambrogio \cite{San17} emphasizes that the lack of geodesic convexity is not a universal obstacle for the study of gradient flows; our results in  the current paper and in \cite{KMV16B,KMV17,KMV16A,KV19,KV19A,SV17} illustrate this idea. 

\section{Generalized dissipation inequalities}
\label{ss:s2}
\subsection{Setting}
\label{ss:statement}
Motivated by the expressions for the entropy and entropy production, we forget 
for a while problem~\eqref{eq:p1}--\eqref{eq:p3} and consider the integrals
\begin{gather}
\int_\Omega E(x, u(x)) \, \mathrm dx
,
\label{eq:int1}
\\
\int_\Omega \left( g(x, u(x)) + u | \nabla_x f(x, u(x)) |^p \right) \mathrm dx
\label{eq:int2}
\end{gather}
on their own right.  Here $\Omega$ a domain in  $\mathbb R^d$; $p \ge 1$; the 
functions
\begin{gather*}
E, g \colon \Omega \times (0, \infty) \to [0, \infty),
\\
f \colon \Omega \times (0, \infty) \to \mathbb R
\end{gather*}
are fixed, and $u$ varies over a set $U$ of functions $\Omega \to (0, \infty)$.  
Observe that the nonnegativity of $E$ and $g$ ensures the existence of the 
integrals~\eqref{eq:int1} and~\eqref{eq:int2}, although they need not be 
finite.

The functions~$f$ and~$E$ introduced in Section~\ref{ss:ee} are, of course, 
prototypes for the ones appearing in~\eqref{eq:int1} and~\eqref{eq:int2}, but 
we assume no formal relationship between them.  In particular, in this section 
we do not suppose that~$f$ satisfies~\eqref{eq:f1a}--\eqref{eq:small-u}.

We would like to know whether~\eqref{eq:int1} can be controlled 
by~\eqref{eq:int2} uniformly with respect to $u \in U$.  In general, this is 
not the case, cf. a related discussion in \cite{KV19}.  However, we show that under suitable assumptions on the 
functions $E$, $f$, and $g$, \eqref{eq:int2} does indeed 
control~\eqref{eq:int1} provided that the set $U$ of admissible $u$ is 
separated from $0$ in some sense. 

For simplicity, we concentrate on the regular case.  Section~\ref{ss:gs} 
contains a discussion of possible generalisations. 
\begin{theorem}
\label{th:yai}
Let $\Omega$ be a bounded, connected, open domain in $\mathbb R^d$ admitting 
the relative isoperimetric inequality.  Let $p \ge 1$.  Suppose that functions 
$E, g \in C(\Omega \times (0, 
\infty))$ and $f \in C^1(\Omega \times (0, 
\infty))$ satisfy
\begin{gather}
E \ge 0, \ g \ge 0;
\label{eq:Efg1}
\\
\lim_{\varepsilon \to 0} \sup_{\substack{0 < u \le \varepsilon \\ x \in 
\Omega}}
E(x, u) < \infty;
\label{eq:Efg2}
\\
\inf_{\substack{u > \varepsilon \\ x \in \Omega \\E(x,u) \ne 0}}
\frac{g(x, u)}{E(x, u)} > 0
\quad \forall \varepsilon > 0,
\label{eq:Efg3}
\\
\lim_{\varepsilon \to 0} \inf_{\substack{0 < u \le \varepsilon \\ x \in 
\Omega}} f(x, u)
>
\lim_{\varepsilon \to 0} \sup_{\substack{u > 0 \\ E(x, u) < \varepsilon}}
f(x, u)
\label{eq:Efg4}
.
\end{gather}
Finally, suppose that a set $U \subset C^1(\Omega)$ consisting of strictly positive
functions contains no sequence $\{u_n\}$ such that $\{E(\cdot, u_n(\cdot))\}$ is 
bounded in $L^1(\Omega)$ and $\{u_n\}$ converges to $0$ in measure.  Then 
there exists a constant $C = C(\Omega, p, E, g, f, U)$ such that
\begin{equation}
\label{eq:yai}
\int_\Omega
E(x, u(x)) \cd x
\le
C
\left(
\int_\Omega
\big( g(x, u(x))
+
u(x)| \nabla_x f(x, u(x))|^p
\big) \cd x
\right)
\quad
( u \in U)
.
\end{equation}
\end{theorem}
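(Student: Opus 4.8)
The plan is to argue by contradiction, converting the failure of the inequality into the existence of a sequence that the hypothesis on $U$ forbids. Suppose no such $C$ exists. Then for every $n$ there is $u_n \in U$ with
\begin{equation}
\label{eq:proof-contra}
\int_\Omega E(x, u_n(x)) \cd x > n \int_\Omega \big( g(x, u_n(x)) + u_n(x) |\nabla_x f(x, u_n(x))|^p \big) \cd x .
\end{equation}
Write $a_n = \int_\Omega E(x, u_n) \cd x$. The first dichotomy is on whether $a_n$ stays bounded or not. If $a_n \to \infty$ along a subsequence, then by \eqref{eq:Efg2} the set where $u_n$ is small contributes a bounded amount to $a_n$, so most of the mass of $E(\cdot, u_n)$ must come from $[u_n > \varepsilon]$ for every fixed $\varepsilon$; but there \eqref{eq:Efg3} forces $\int g(x, u_n) \cd x$ to be comparable to that portion of $a_n$, hence also unbounded, contradicting \eqref{eq:proof-contra} (whose right side is at least $\int g$). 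So $a_n$ is bounded; after normalizing (or just by boundedness) we may assume the right-hand side of \eqref{eq:proof-contra} tends to $0$, i.e.\ both $\int_\Omega g(x, u_n) \cd x \to 0$ and $\int_\Omega u_n |\nabla_x f(x, u_n)|^p \cd x \to 0$, while $\liminf a_n > 0$ (using that $\{E(\cdot,u_n)\}$ bounded in $L^1$ plus the hypothesis on $U$ rules out $u_n \to 0$ in measure, and a separate easy argument shows $a_n \not\to 0$ — otherwise again $u_n\to 0$ in measure since $E$ controls the measure of $[u_n \ge \delta, E \ge \eta]$ via \eqref{eq:Efg3}... I should be careful here and instead extract the contradiction directly below).

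The heart of the argument is the following isoperimetric step. Fix a threshold: by \eqref{eq:Efg4} choose $\delta > 0$ and levels so that there is a gap, namely constants $\lambda < \Lambda$ and $\varepsilon_0, \eta_0 > 0$ with $f(x, u) > \Lambda$ whenever $0 < u \le \varepsilon_0$, and $f(x, u) < \lambda$ whenever $E(x, u) < \eta_0$. Split $\Omega = A_n \cup B_n \cup R_n$ where $A_n = [u_n \le \varepsilon_0]$, $B_n = [E(\cdot, u_n) \ge \eta_0]$, and $R_n$ the rest (on $R_n$ we have $u_n > \varepsilon_0$ and $E(\cdot, u_n) < \eta_0$). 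On $R_n$, \eqref{eq:Efg3} gives $\int_{R_n} E(x, u_n) \cd x \le c^{-1}\int_{R_n} g(x, u_n)\cd x \to 0$, so the entropy concentrates on $A_n \cup B_n$; since $E \le$ const on $A_n$ by \eqref{eq:Efg2}, and $\liminf a_n > 0$, the set $B_n$ has measure bounded below. Meanwhile on $A_n$ the composed function $v_n := f(x, u_n(x))$ exceeds $\Lambda$, and on $R_n \cup B_n$... — precisely, on $R_n$ it is below $\lambda$. Thus $v_n$ jumps across the gap $(\lambda, \Lambda)$ between $A_n$ and $R_n$. The relative isoperimetric inequality, applied to the level set $[v_n > (\lambda+\Lambda)/2]$, bounds $\min(|A_n'|, |\Omega \setminus A_n'|)^{(d-1)/d}$ by the perimeter, which in turn is controlled by $\int_\Omega |\nabla v_n| \cd x$ (coarea) — and this gradient integral is small because $\int_\Omega u_n |\nabla v_n|^p \cd x \to 0$, with $u_n$ bounded below by $\varepsilon_0$ exactly on the transition region, together with an $L^\infty$ or $L^1$ control of $v_n$ needed to pass from the $L^p$-weighted bound to a genuine smallness of perimeter. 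This forces either $|A_n| \to 0$ or $|\Omega \setminus A_n| \to 0$. The former, combined with the fact that on $\Omega \setminus A_n$ one has $u_n > \varepsilon_0$, would give $\liminf \int E(x,u_n)$ controlled by the $R_n \cup B_n$ decomposition without smallness — need to re-run; the latter says $u_n \le \varepsilon_0$ on almost all of $\Omega$, i.e.\ $u_n \to 0$ in measure (after sending $\varepsilon_0 \to 0$ through a diagonal argument over the whole construction), which contradicts the hypothesis on $U$ since $\{E(\cdot, u_n)\}$ is bounded in $L^1$.

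The last paragraph hides the one genuinely delicate point, which I expect to be the main obstacle: extracting, from the weighted smallness $\int_\Omega u_n |\nabla_x f(x,u_n)|^p \cd x \to 0$, a bound on the \emph{unweighted} total variation of (a truncation of) $v_n = f(\cdot, u_n)$ that is small enough to beat the isoperimetric constant. The weight $u_n$ is harmless where $u_n \gtrsim \varepsilon_0$, but the transition set could in principle sit partly where $u_n$ is tiny; this is exactly why the gap in \eqref{eq:Efg4} is phrased asymmetrically (a pointwise lower bound $f > \Lambda$ on $[u_n \le \varepsilon_0]$, versus a bound via the \emph{entropy} on the other side), so that one can instead truncate $v_n$ from above at level $\Lambda$ and from below at $\lambda$: the truncated function $w_n = \max(\lambda, \min(\Lambda, v_n))$ equals $\Lambda$ on $A_n$, equals $v_n$ only where $\lambda \le v_n \le \Lambda$, which by the choice of $\Lambda$ forces $u_n \ge \varepsilon_0$ there (since $u_n < \varepsilon_0 \Rightarrow v_n > \Lambda$), so on the set where $\nabla w_n \ne 0$ we genuinely have $u_n \ge \varepsilon_0$ and hence $\int_\Omega |\nabla w_n|^p \cd x \le \varepsilon_0^{-1}\int_\Omega u_n |\nabla v_n|^p \cd x \to 0$. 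Then Hölder (using $|\Omega| < \infty$) upgrades this to $\int_\Omega |\nabla w_n| \cd x \to 0$, the coarea/isoperimetric machinery applies to $w_n$, and the rest goes through as sketched. Once the truncation trick is in place, the remaining work — the two dichotomies, the bookkeeping with $A_n, B_n, R_n$, and the diagonal passage $\varepsilon_0 \to 0$ — is routine.
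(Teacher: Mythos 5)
Your truncation $w_n=\max\bigl(\lambda,\min(\Lambda, f(\cdot,u_n))\bigr)$ is a clean way to localize $\nabla f(\cdot,u_n)$ to the set where $u_n\ge\varepsilon_0$; it is essentially the same move as the paper's Lemma~\ref{lem:coareaappl}, which observes that for $t$ in the gap $(\alpha,\beta)$ the level sets $[f_n>t]$ have all of their boundary inside $[E_n>a]\cap[u_n>\varepsilon_n]$, so that restricting the coarea integral there costs nothing. But what you dismiss as ``routine bookkeeping'' is exactly where the proof lives, and two of its pieces are missing. First, Hölder with the crude constant $|\Omega|^{1-1/p}$ only yields $\int|\nabla w_n|\to 0$; then, in the subcase where the good level set $[w_n>t]$ has small measure, you deduce $|A_n|\to 0$ and obtain \emph{no} contradiction: $u_n$ then stays away from zero in measure, so the hypothesis on $U$ is not violated, and $\int E(\cdot,u_n)\to 0$ is perfectly compatible with the failure of the inequality. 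One needs instead $\int|\nabla w_n|^p\ge|S_n|^{1-p}\bigl(\int|\nabla w_n|\bigr)^p$ with $S_n=[\lambda<f(\cdot,u_n)<\Lambda]\subset[u_n>\varepsilon_0]\cap[E(\cdot,u_n)\ge\eta_0]$, together with a separately established bound $|S_n|\lesssim|A_n|$ (the analogue of Lemma~\ref{lem:claim2}); with the crude constant, the final comparison leaves a $0\cdot\infty$ indeterminacy for $p>d/(d-1)$ — which includes the case $p=2$, $d\ge 3$ used in Theorem~\ref{th:eep}.

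Second, the case analysis is not a formality. Only the case $|A_n|\to|\Omega|$ produces $u_n\to 0$ in measure, and even then only after a diagonal passage $\varepsilon_0\downarrow 0$, during which the constant in~\eqref{eq:Efg3} may degenerate. The paper handles this by choosing the auxiliary sequences $\varepsilon_n\to 0$ and $\xi_n\to 0$ with $\xi_n$ dominated by the infimum in~\eqref{eq:Efg3} at scale $\varepsilon_n$, and phrasing the violating hypothesis as $\int(g_n+u_n|\nabla f_n|^p)\le\varepsilon_n\xi_n\int E_n$ rather than your $<\tfrac1n\int E_n$; the factor $\varepsilon_n$ is then absorbed by the lower bound $u_n\ge\varepsilon_n$ off $A_n$, and the extra $\xi_n\to 0$ is what forces the contradiction in the two cases where $\lim|A_n|<|\Omega|$. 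With these repairs your outline coincides with the paper's argument; without them, the cases you explicitly flag as ``need to re-run'' do not close.
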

\begin{remark}
The isoperimetric inequality for $\Omega$ reads
\begin{equation}
P(A; \Omega) \ge c_\Omega |A|^{\frac{d-1}{d}},
\quad A \subset \Omega,\ |A| \le \frac 12 | \Omega |
,
\end{equation}
where $P(A; \Omega)$ denotes the relative perimeter of a Lebesgue measurable 
set $A$ of locally finite perimeter with respect to $\Omega$, cf.~\cite[Remark 
12.39]{Mag12}, \cite{Mazja}.  We recall that the relative perimeter is defined as
\begin{equation*}
P(A; \Omega) = |\mu_A| (\Omega)
,
\end{equation*}
where $\mu_A:=\nabla 1_A$ is the Gauss-Green measure associated with~$A$. The support of~$\mu_A$ is contained \cite{Mag12} in the 
topological boundary of~$A$.
\end{remark}
\begin{remark}
\label{rem:hyp}
If $E \in C(\overline \Omega \times \mathbb R_+)$, condition~\eqref{eq:Efg2} 
is automatically true. If the set $\{ (x, u) \in \overline \Omega \times 
\mathbb R_+ \colon E(x, u) = 0\}$ is compact, the right-hand side 
of~\eqref{eq:Efg4} is simplified to $\max_{E(x, u) = 0} f(x, u)$
and likewise, if $f \in C(\overline \Omega \times \mathbb R_+)$, the left-hand 
side of~\eqref{eq:Efg4} can be written as $\min_{x} f(x, 0)$.  As 
for~\eqref{eq:Efg3}, it is more tricky.  In Section~\ref{ss:gs} we show that 
it always holds in a particular setting relevant for gradient flows 
(Theorem~\ref{th:eepab}).
\end{remark}

\begin{remark} The infimum in \eqref{eq:Efg3} depends on $\varepsilon$ and may tend to zero as $\varepsilon\to 0$, otherwise the claim would be trivial. \end{remark}

\subsection{Strategy of the proof of Theorem~\ref{th:yai}}

Before starting the proof of Theorem~\ref{th:yai}, we would like to informally 
outline the underlying ideas.

For simplicity, we will opt for an argument by contradiction.  Of course, a direct proof 
could be presented (as we have recently done in \cite{KV19} for a related inequality), and a quantitative constant could be derived 
from it.  However, this would be much more cumbersome, 
and the constant obtained in this way would anyway not be optimal. 
Any discussion of quantitative 
constants lies beyond the scope of this article.

It easily follows from~\eqref{eq:Efg3} that~$g$ controls~$E$ from above unless~$u$ is 
small.  Moreover, we infer (Lemma \ref{lem:claim1}) that if the constant in \eqref{eq:yai} blows up, the sets where either~$u$ 
or~$E$ are small tend to grow and together occupy nearly all of $\Omega$, 
while the `transitional annulus'---where neither is small---collapses.  At 
this point we must be prepared to face the situation where the integral
\begin{equation}
\label{eq:uapprox0}
\int_{[u \ll 1]} E \cd x
\end{equation}
is controlled neither by $$\int_{[u \ll 1]} g \cd x$$ (because \eqref{eq:Efg3} is not applicable), nor by $$\int_{[E \ll 1]} g \cd x$$ (because $g$ may be small), nor by $$\int_{[u \gg 0, E \gg 0]} g \cd x$$ (because the 
`annulus' is too small).

This is where the term with the gradient comes into play.  The crucial 
observation is that the total variation of~$f$ over the `annulus' can be 
estimated from below.  Actually, condition ~\eqref{eq:Efg4} gives a universal 
lower bound on the variation of~$f$ between the `inner boundary' of the annulus 
(say, where~$u$ is small) and its `outer boundary' (where~$E$ is small).  All in all, the 
integral~\eqref{eq:uapprox0} is controlled by the area of the set $[ u\ll
1]$ (due to~\eqref{eq:Efg2}), which is controlled by the perimeter of this set 
(by the isoperimetric inequality), which is in turn controlled by the total 
variation of~$f$ over the `annulus'.  This eventually leads to a contradiction.  
Naturally, when this idea is implemented in Lemma~\ref{lem:coareaappl} and the subsequent 
reasoning, we must relate the total variation and the integral $\int_\Omega u 
|\nabla f|^p \cd x$.  Then we use the coarea formula and estimate the total 
variation of~$f$ by the perimeters of its superlevel sets.

\subsection{Proof of Theorem~\ref{th:yai}}
Here we prove Theorem~\ref{th:yai}.  We start with the following observations.

Under the hypotheses of Theorem~\ref{th:yai}, integral 
\eqref{eq:int1} is finite for $u \in U$ whenever so is
\begin{equation*}
\int_\Omega g(x, u(x)) \cd x
.
\end{equation*}
Indeed, according to~\eqref{eq:Efg2} we can choose $\varepsilon > 0$ such that
\begin{equation*}
A := \sup_{\substack{0 < u \le \varepsilon \\ x \in \Omega}}
E(x, u) < \infty
.
\end{equation*}
By~\eqref{eq:Efg3}, we have
\begin{equation*}
B := \inf_{\substack{u > \varepsilon \\ x \in \Omega \\E(x,u) \ne 0}}
\frac{g(x, u)}{E(x, u)} > 0
\end{equation*}
(possibly $B = \infty$).  Then $E(x, u) \le g(x, u)/B$ whenever $u > 
\varepsilon$, so
\begin{align*}
\int_\Omega E(x, u(x)) \cd x
& =
\int_{[u \le \varepsilon]} E(x, u(x)) \cd x
+
\int_{[u > \varepsilon]} E(x, u(x)) \cd x
\\
&
\le
A |\Omega| +
\frac 1B
\int_\Omega g(x, u(x)) \cd x
<\infty
,
\end{align*}
as claimed.

Take sequences $\{\varepsilon_n\}$ and $\{\xi_n\}$ such that $\varepsilon_n > 
0$, $\varepsilon_n \to 0$,
\begin{equation*}
0 < \xi_n \le \inf_{\substack{u > \varepsilon_n \\ x \in \Omega}} \frac{g(x, 
u)}{E(x, u)}
\end{equation*}
(this is possible according to~\eqref{eq:Efg3}), and $\xi_n \to 0$.

Assume that Theorem~\ref{th:yai} is not true.  Then there exists a sequence of 
functions $\{u_n\} \subset U$ such that
\begin{equation}
\label{eq:p01}
\int_\Omega
(
g_n
+
u_n | \nabla f_n |^p
)
\cd x
\le
\varepsilon_n \xi_{n}
\int_\Omega
E_n
\, \mathrm dx
,
\end{equation}
where
\begin{align*}
E_n(x) &= E(x, u_n(x)), \\
f_n(x) &= f(x, u_n(x)), \\
g_n(x) &= g(x, u_n(x))
.
\end{align*}
Clearly, $E_n, g_n \in C(\Omega)$ and $f_n \in C^1(\Omega)$.  Moreover, it 
easily follows from~\eqref{eq:Efg1}--\eqref{eq:Efg4} that
\begin{gather}
E_n(x) \ge 0, \ g_n(x) \ge 0;
\label{eq:Efg1n}
\\
\varlimsup_{n \to \infty} \sup_{[u_n \le \varepsilon_n]} E_n < \infty;
\label{eq:Efg2n}
\\
\varliminf_{n \to \infty} \inf_{[u_n \le \varepsilon_n]} f_n
>
\varlimsup_{\varepsilon \to 0} \sup_{[E_n < \varepsilon]}
f_n
\label{eq:Efg4n}
,
\end{gather}
and according to the choice of~$\xi_n$, we have
\begin{equation}
\label{eq:Efg3n}
g_n \ge \xi_n E_n \text{ on } [u_n > \varepsilon_n]
.
\end{equation}

We want to show that the sequence $\{E_n\}$ is bounded in $L^1(\Omega)$ and 
$u_n \to 0$ in measure, thus obtaining a contradiction.

We use~\eqref{eq:p01} to estimate
\begin{align*}
\frac{1}{\varepsilon_n}
\int_{\Omega}
u_n | \nabla f_n |^p
\, \mathrm dx
& \le
\xi_n
\int_\Omega
E_n
\, \mathrm dx
-
\frac{1}{\varepsilon_n}
\int_\Omega
g_n
\, \mathrm dx
\\
& \le
\xi_n
\int_\Omega
E_n
\, \mathrm dx
-
\frac{1}{\varepsilon_n}
\int_{[u_n > \varepsilon_n]}
g_n
\, \mathrm dx
\\
& \le
\xi_n
\int_\Omega
E_n
\, \mathrm dx
-
\frac{\xi_n}{\varepsilon_n}
\int_{[u_n > \varepsilon_n]}
E_n
\, \mathrm dx
\\
& =
- \xi_n (\varepsilon_n^{-1} - 1)
\int_{[u_n > \varepsilon_n]}
E_n
\, \mathrm dx
+ \xi_n
\int_{[u_n \le \varepsilon_n]}
E_n
\, \mathrm dx
.
\end{align*}
Thus, we have
\begin{equation}
\label{eq:p02}
\frac{1}{\varepsilon_n}
\int_\Omega
u_n | \nabla f_n |^p
\, \mathrm dx
\le
- \xi_n (\varepsilon_n^{-1} - 1)
\int_{[u_n > \varepsilon_n]}
E_n
\, \mathrm dx
+ \xi_n
\int_{[u_n \le \varepsilon_n]}
E_n
\, \mathrm dx
.
\end{equation}
For large $n$, the first term on the right-hand side is negative, so we 
conclude that
\begin{equation}
\label{eq:p03}
\frac{1}{\varepsilon_n}
\int
u_n | \nabla f_n |^p
\, \mathrm dx
\le
\xi_n \sup_{[u_n \le \varepsilon_n]} E_n | [ u_n \le \varepsilon_n ] |
.
\end{equation}

From~\eqref{eq:p02} we get
\begin{equation}
\label{eq:p031}
\int_{[u_n > \varepsilon_n]}
E_n
\, \mathrm dx
\le
\frac{1}{\varepsilon_n^{-1} - 1}
\int_{[u_n \le \varepsilon_n]}
E_n
\, \mathrm dx
\le
\frac{\sup_{[u_n \le \varepsilon_n]} E_n|[u_n \le 
\varepsilon_n]|}{\varepsilon_n^{-1} - 1}
\end{equation}
and by~\eqref{eq:Efg2n}, the last expression is bounded uniformly with respect 
to $n$.  Hence the sequence $\{E_n\}$ is bounded in $L^1(\Omega)$. 

\begin{lemma}
\label{lem:claim1}
Given $a > 0$,
\begin{equation}
\label{eq:claim1}
\lim_{n \to \infty}
|
[ u_n > \varepsilon_n ]
\cap
[ E_n > a ]
|
=0
.
\end{equation}
\end{lemma}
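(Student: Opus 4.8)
The plan is to read the lemma off directly from the estimate \eqref{eq:p031} that has just been established, using Chebyshev's inequality. The key observation is that \eqref{eq:p031} actually contains more than the $L^1$-boundedness of $\{E_n\}$ that was extracted from it: the bound on its right-hand side is the product of the factor $1/(\varepsilon_n^{-1}-1) = \varepsilon_n/(1-\varepsilon_n)$, which tends to $0$, and the factor $|[u_n\le\varepsilon_n]|\cdot\sup_{[u_n\le\varepsilon_n]} E_n$, which stays bounded because $|[u_n\le\varepsilon_n]|\le|\Omega|<\infty$ (the domain is bounded) and $\varlimsup_n\sup_{[u_n\le\varepsilon_n]}E_n<\infty$ by \eqref{eq:Efg2n}. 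Consequently the mass of $E_n$ over $[u_n>\varepsilon_n]$ vanishes:
\begin{equation*}
\int_{[u_n>\varepsilon_n]} E_n\,\mathrm dx \longrightarrow 0\qquad(n\to\infty).
\end{equation*}

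The first step is to record this limit explicitly from \eqref{eq:p031}. Then, fixing $a>0$, I apply Chebyshev's inequality on the set $[u_n>\varepsilon_n]$, using $E_n\ge 0$ (from \eqref{eq:Efg1n}) to restrict the integral to the smaller set:
\begin{equation*}
a\,\bigl|\,[u_n>\varepsilon_n]\cap[E_n>a]\,\bigr|
\le \int_{[u_n>\varepsilon_n]\cap[E_n>a]} E_n\,\mathrm dx
\le \int_{[u_n>\varepsilon_n]} E_n\,\mathrm dx .
\end{equation*}
Letting $n\to\infty$ and dividing by $a$ gives \eqref{eq:claim1}.

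There is essentially no obstacle here; the only minor point is that \eqref{eq:p031} is valid only once $\varepsilon_n<1$, which is immaterial for an asymptotic statement. It is worth emphasizing that this step uses only \eqref{eq:p031} and \eqref{eq:Efg2n}; the complementary bound \eqref{eq:p03} on $\varepsilon_n^{-1}\int_\Omega u_n|\nabla f_n|^p$ and the separation property \eqref{eq:Efg4n}, together with the relative isoperimetric inequality for $\Omega$, are reserved for the next step, where one controls the measure of $[u_n>\varepsilon_n]$ itself and thereby forces $u_n\to 0$ in measure, contradicting the hypothesis on $U$.
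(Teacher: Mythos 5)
Your proof is correct and follows essentially the same route as the paper's: both apply Chebyshev's inequality on $[u_n>\varepsilon_n]\cap[E_n>a]$, enlarge the integration domain to $[u_n>\varepsilon_n]$, and then use \eqref{eq:p031} together with \eqref{eq:Efg2n} and boundedness of $|\Omega|$ to conclude that the bound tends to $0$ because of the vanishing factor $1/(\varepsilon_n^{-1}-1)$. The only difference is presentational: you extract the limit $\int_{[u_n>\varepsilon_n]}E_n\to 0$ as a standalone observation before applying Chebyshev, whereas the paper chains the three inequalities in one display.
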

\begin{proof}
Using~\eqref{eq:p031}, we have:
\begin{align*}
\label{eq:claim101}
|
[ u_n > \varepsilon_n ]
\cap
[ E_n > a ]
|
& \le
\frac 1a
\int_{[u_n > \varepsilon_n]\cap[E_n > a]}
E_n
\, \mathrm dx
\\
& \le
\frac 1a
\int_{[u_n > \varepsilon_n]}
E_n
\, \mathrm dx
\\
& \le
\frac{|[u_n \le \varepsilon_n]|}{a(\varepsilon_n^{-1} - 1)}
\sup_{[u_n \le \varepsilon_n]} E_n
\to 0
\quad (n \to \infty)
,
\end{align*}
where we have taken into account~\eqref{eq:Efg2n}, so \eqref{eq:claim1} is 
proved.
\end{proof}

\begin{lemma}
\label{lem:claim2}
Given $a > 0$, for large $n$ we have
\begin{equation}
\label{eq:claim2}
| [ E_n > a  ] |
\le
2
| [ u_n \le \varepsilon_n  ] |
.
\end{equation}
\end{lemma}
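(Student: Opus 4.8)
The plan is to split the superlevel set $[E_n > a]$ into its intersections with $[u_n \le \varepsilon_n]$ and with $[u_n > \varepsilon_n]$, to bound the first piece trivially by $|[u_n \le \varepsilon_n]|$, and to show that for all large~$n$ the second piece is no larger than $|[u_n \le \varepsilon_n]|$ as well, by recycling the chain of estimates already used in the proof of Lemma~\ref{lem:claim1}.

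First I would write $|[E_n > a]| = |[E_n > a] \cap [u_n \le \varepsilon_n]| + |[E_n > a] \cap [u_n > \varepsilon_n]| \le |[u_n \le \varepsilon_n]| + |[E_n > a] \cap [u_n > \varepsilon_n]|$, so that it suffices to prove $|[E_n > a] \cap [u_n > \varepsilon_n]| \le |[u_n \le \varepsilon_n]|$ for all large~$n$. Estimating the measure of a superlevel set by the corresponding integral and invoking~\eqref{eq:p031},
\[
|[E_n > a] \cap [u_n > \varepsilon_n]|
\le \frac 1a \int_{[u_n > \varepsilon_n]} E_n \cd x
\le \frac{\sup_{[u_n \le \varepsilon_n]} E_n}{a\,(\varepsilon_n^{-1} - 1)}\, |[u_n \le \varepsilon_n]|.
\]
Now the coefficient multiplying $|[u_n \le \varepsilon_n]|$ tends to~$0$: by~\eqref{eq:Efg2n} the numerator $\sup_{[u_n \le \varepsilon_n]} E_n$ stays bounded, while $\varepsilon_n^{-1} - 1 \to \infty$ since $\varepsilon_n \to 0$. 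Hence this coefficient is at most~$1$ for all large~$n$, which yields the required bound and therefore $|[E_n > a]| \le 2 |[u_n \le \varepsilon_n]|$.

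I do not expect any genuine obstacle here: the statement is pure bookkeeping assembled from~\eqref{eq:p031}, \eqref{eq:Efg2n}, and Lemma~\ref{lem:claim1}. The only point worth being careful about is that the ``small'' contribution $|[E_n > a] \cap [u_n > \varepsilon_n]|$ is not merely $o(1)$ but in fact $o(|[u_n \le \varepsilon_n]|)$ --- this is exactly what the factor $|[u_n \le \varepsilon_n]|$ appearing in~\eqref{eq:p031} supplies, and it is what makes the bound $2|[u_n \le \varepsilon_n]|$ survive even when $|[u_n \le \varepsilon_n]| \to 0$. (One could instead split into the cases $\liminf_n |[u_n \le \varepsilon_n]| > 0$ and $\liminf_n |[u_n \le \varepsilon_n]| = 0$, but the direct estimate above renders this case distinction unnecessary.)
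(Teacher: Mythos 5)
Your proof is correct and matches the paper's argument essentially step for step: both decompose $|[E_n > a]|$ as $|[E_n > a] \cap [u_n \le \varepsilon_n]| + |[E_n > a] \cap [u_n > \varepsilon_n]|$, reuse the estimate $|[u_n > \varepsilon_n] \cap [E_n > a]| \le \frac{\sup_{[u_n \le \varepsilon_n]} E_n}{a(\varepsilon_n^{-1}-1)}|[u_n \le \varepsilon_n]|$ from the proof of Lemma~\ref{lem:claim1}, and observe that the coefficient tends to $0$ so is at most $1$ for large $n$. The closing remark about the bound being $o(|[u_n \le \varepsilon_n]|)$ rather than just $o(1)$ is a nice explicit articulation of why the lemma is useful, but the proof itself is the same as the paper's.
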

\begin{proof}
Using the estimate
\begin{equation*}
|
[ u_n > \varepsilon_n ]
\cap
[ E_n > a ]
|
\le
\frac{|[u_n \le \varepsilon_n]|}{a(\varepsilon_n^{-1} - 1)}
\sup_{[u_n \le \varepsilon_n]} E_n
\end{equation*}
obtained in the proof of Lemma~\ref{lem:claim1}, we get
\begin{equation*}
| [ E_n > a  ] |
\le
| [ u_n \le \varepsilon_n  ] |
+
| [ u_n > \varepsilon_n  ] \cap [ E_n > a  ] |
\le
\left(
1 +
\frac{\sup_{[u_n \le \varepsilon_n]} E_n}{a(\varepsilon_n^{-1} - 1)}
\right)
| [ u_n \le \varepsilon_n  ] |
,
\end{equation*}
and the lemma follows.
\end{proof}

It follows from~\eqref{eq:Efg4n} that we can choose $a>0$, $\alpha$, and 
$\beta$, all independent of~$n$, such that for large $n$ we have
\begin{equation}
\label{eq:alphabeta}
\sup_{[E_n \le a]} f_n
\le \alpha < \beta \le
\inf_{[u_n \le \varepsilon_n]} f_n
.
\end{equation}

We can assume that the limit
\begin{equation*}
\lim_{n \to \infty} |[ u_n \le \varepsilon_n  ]|
\end{equation*}
exists.  It follows from~\eqref{eq:alphabeta} that for large $n$ the sets 
$[u_n \le \varepsilon_n]$ and $[E_n \le a]$ are disjoint, so in view of 
Lemma~\ref{lem:claim1} we have
\begin{equation}
\label{eq:u+E}
|
[ u_n \le \varepsilon_n ]
|
+
|
[ E_n \le a ]
|
\to |\Omega|
\end{equation}
Thus, we actually face three logical possibilities:
\begin{gather}
\lim_{n \to \infty} |[ u_n \le \varepsilon_n ]| = |\Omega|;
\label{eq:case-i}
\\
\lim_{n \to \infty} |[ u_n \le \varepsilon_n ]| = 0;
\label{eq:case-ii}
\\
\lim_{n \to \infty} |[ u_n \le \varepsilon_n ]| = \mu_0 \in (0, |\Omega|);
\label{eq:case-iii}
\end{gather}

As $\varepsilon_n \to 0$, \eqref{eq:case-i} clearly implies $u_n \to 0$ in 
measure, a contradiction.

In what follows we show that~\eqref{eq:case-ii} and~\eqref{eq:case-iii} are in 
fact impossible.  The following lemma is crucial.
\begin{lemma}
\label{lem:coareaappl}
We have
\begin{equation}
\label{eq:coareaappl}
\frac{1}{\varepsilon_n}
\int_{\Omega}
u_n | \nabla f_n |^p \rd x
\ge
\frac{1}{|[E_n > a] \cap [u_n > \varepsilon_n]|^{p-1}}
\left(
\int_\alpha^\beta P([f_n > t], \Omega) \cd t
\right)
^p
\end{equation}
\end{lemma}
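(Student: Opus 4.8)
The plan is to obtain \eqref{eq:coareaappl} from the coarea formula combined with Hölder's inequality, exploiting the cutoff levels $\alpha,\beta$ from \eqref{eq:alphabeta}. First I would note that $\nabla f_n$ is supported on $[u_n>\varepsilon_n]$ up to a set where we do not care, and more precisely that the superlevel sets $[f_n>t]$ for $t\in(\alpha,\beta)$ interpolate between a set containing $[E_n\le a]$ (since by \eqref{eq:alphabeta} $f_n>\alpha$ there, wait — actually $\sup_{[E_n\le a]}f_n\le\alpha$, so $[E_n\le a]\subset[f_n\le\alpha]$) and a set contained in $[u_n\le\varepsilon_n]^c$... let me restate: by \eqref{eq:alphabeta}, on $[u_n\le\varepsilon_n]$ we have $f_n\ge\beta>\alpha$, so $[u_n\le\varepsilon_n]\subset[f_n>t]$ for all $t\in(\alpha,\beta)$; and on $[E_n\le a]$ we have $f_n\le\alpha<t$, so $[f_n>t]\cap[E_n\le a]=\emptyset$, i.e. $[f_n>t]\subset[E_n>a]$. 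The key point is that the part of the boundary of $[f_n>t]$ lying in $\Omega$ must sit inside the region $[E_n>a]\cap[u_n>\varepsilon_n]$: indeed on $[u_n\le\varepsilon_n]$ the set $[f_n>t]$ contains a full neighbourhood (so no boundary there) and off $[E_n>a]$ it is empty (so no boundary there either), modulo the null technicalities of relative perimeter.

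Next I would invoke the coarea formula for the $C^1$ function $f_n$: for each $t$,
\begin{equation*}
\int_\Omega |\nabla f_n|\,\mathbf 1_{[|\nabla f_n|>0]}\,\mathrm dx
= \int_{\mathbb R} \mathcal H^{d-1}\big(f_n^{-1}(t)\cap\Omega\big)\,\mathrm dt
\ge \int_\alpha^\beta P([f_n>t];\Omega)\,\mathrm dt,
\end{equation*}
using that $P([f_n>t];\Omega)\le\mathcal H^{d-1}(f_n^{-1}(t)\cap\Omega)$ for a.e. $t$. The issue is that we have the weighted integral $\int u_n|\nabla f_n|^p$, not $\int|\nabla f_n|$, so I need to pass between these by Hölder. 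Write, on the set $R_n:=[E_n>a]\cap[u_n>\varepsilon_n]$ which (by the discussion above) contains $\mathrm{supp}\,\nabla f_n$ restricted to the relevant levels, and using $u_n>\varepsilon_n$ there:
\begin{equation*}
\int_\alpha^\beta P([f_n>t];\Omega)\,\mathrm dt
\le \int_{R_n} |\nabla f_n|\,\mathrm dx
\le \left(\int_{R_n} u_n|\nabla f_n|^p\,\mathrm dx\right)^{1/p}
\left(\int_{R_n} u_n^{-1/(p-1)}\,\mathrm dx\right)^{(p-1)/p},
\end{equation*}
for $p>1$, and then bound $u_n^{-1/(p-1)}\le\varepsilon_n^{-1/(p-1)}$ on $R_n$ so that the last factor is at most $(\varepsilon_n^{-1/(p-1)}|R_n|)^{(p-1)/p}=\varepsilon_n^{-1/p}|R_n|^{(p-1)/p}$. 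Raising to the $p$-th power and rearranging gives exactly
\begin{equation*}
\frac{1}{\varepsilon_n}\int_\Omega u_n|\nabla f_n|^p\,\mathrm dx
\ge \frac{1}{|R_n|^{p-1}}\left(\int_\alpha^\beta P([f_n>t];\Omega)\,\mathrm dt\right)^p,
\end{equation*}
which is \eqref{eq:coareaappl}; the case $p=1$ is the direct coarea bound with $|R_n|^0=1$, so no Hölder is needed.

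The main obstacle I expect is the careful justification that the relative perimeter of $[f_n>t]$ in $\Omega$ is indeed controlled by $\int_{R_n}|\nabla f_n|$ rather than the full $\int_\Omega|\nabla f_n|$ — that is, that the portion of $\partial[f_n>t]$ inside $\Omega$ cannot escape the set $R_n$. This requires arguing that near a point of $[u_n\le\varepsilon_n]$ the superlevel set $[f_n>t]$ with $t<\beta$ contains an open neighbourhood (true since $f_n$ is continuous and $\ge\beta$ there), hence contributes nothing to $P(\cdot;\Omega)$, and symmetrically that where $E_n\le a$ one has $f_n\le\alpha<t$ so the superlevel set is locally empty. Combined with the fact (from the isoperimetric inequality remark) that $\mathrm{supp}\,\mu_{[f_n>t]}\subset\partial[f_n>t]$, this localizes the perimeter to $R_n$ as needed. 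A secondary technical point is handling the set $[|\nabla f_n|=0]$ and the fact that $\{f_n^{-1}(t):t\in(\alpha,\beta)\}$ are essentially disjoint, which is automatic from the coarea formula. Everything else is a routine application of coarea plus Hölder.
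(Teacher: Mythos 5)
Your proposal is correct and follows essentially the same route as the paper: restrict to the set $R_n=[E_n>a]\cap[u_n>\varepsilon_n]$ using the level bounds~\eqref{eq:alphabeta}, apply the coarea formula there, localize the relative perimeter via the Gauss--Green measure to conclude $P([f_n>t];R_n)=P([f_n>t];\Omega)$ for $t\in(\alpha,\beta)$, and finish with H\"older. The only cosmetic difference is that the paper first replaces $u_n$ by $\varepsilon_n$ and then applies unweighted H\"older on $R_n$, whereas you keep the weight $u_n$ in H\"older and then bound $u_n^{-1/(p-1)}$ by $\varepsilon_n^{-1/(p-1)}$; both give the same estimate.
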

\begin{proof}
We have
\begin{align}
\frac{1}{\varepsilon_n}
\int_{\Omega}
u_n | \nabla f_n |^p
\, \mathrm dx
& \ge
\int_{[E_n > a] \cap [u_n > \varepsilon_n]}
| \nabla f_n |^p
\, \mathrm dx
\\
& \ge
\frac{1}{|[E_n > a] \cap [u_n > \varepsilon_n] |^{p-1}}
\left(
\int_{[E_n > a] \cap [u_n > \varepsilon_n]}
| \nabla f_n |
\, \mathrm dx
\right)
^p
.
\label{eq:tmp-101}
\end{align}
Using the coarea formula, we get:
\begin{align}
\int_{[E_n > a] \cap [u_n > \varepsilon_n]}
| \nabla f_n |
\, \mathrm dx
& =
\int_{-\infty}^{\infty}
P([f_n > t]; [E_n > a] \cap [u_n > \varepsilon_n])
\, \mathrm dx
\notag
\\
& \ge
\int_{\alpha}^{\beta}
P([f_n > t]; [E_n > a] \cap [u_n > \varepsilon_n])
\, \mathrm dx
\label{eq:tmp-102}
\end{align}
Fix $t \in (\alpha, \beta)$.  Evoking the definition of the relative 
perimeter, we have
\begin{equation}
\label{eq:case2-1}
P([f_n > t]; [E_n > a] \cap [u_n > \varepsilon_n])
=
\left |\mu_{[f_n > t]} \right|([E_n > a] \cap [u_n > \varepsilon_n])
,
\end{equation}
where $\mu_{[f_n > t]}$ is the Gauss-Green measure.  Obviously, we have
\begin{equation*}
\supp \mu_{[f_n > t]} \cap \Omega \subset \partial_\Omega [f_n > t] \subset 
[f_n = t]
\end{equation*}
for any $t \in (\alpha, \beta)$.  It follows from~\eqref{eq:alphabeta} that
\begin{equation*}
[f_n = t] \subset [E_n > a] \cap [u_n > \varepsilon_n]
,
\end{equation*}
so
\begin{equation*}
\supp \mu_{[f_n > t]} \cap \Omega\subset [E_n > a] \cap [u_n > \varepsilon_n]
\end{equation*}
and continuing~\eqref{eq:case2-1}, we obtain
\begin{align*}
P([f_n > t]; [E_n > a] \cap [u_n > \varepsilon_n])
& =
\left|\mu_{[f_n > t]}\right|([E_n > a] \cap [u_n > \varepsilon_n])
\\
& =
\left|\mu_{[f_n > t]}\right|(\Omega)
\\
& =
P([f_n > t]; \Omega)
.
\end{align*}
Combining this with~\eqref{eq:tmp-101} and~\eqref{eq:tmp-102}, we 
obtain~\eqref{eq:coareaappl}.
\end{proof}

Let us show that~\eqref{eq:case-ii} is impossible.  Assume that it holds.

If at a point~$x$ we have $f_n(x) > t$, $t\in (\alpha,\beta)$, \eqref{eq:alphabeta} guarantees that $E_n(x) > a$. Hence, $[f_n > t] 
\subset [E_n > a]$.  It follows from~\eqref{eq:case-ii} and~\eqref{eq:u+E} that 
$[|E_n \le a]| \to |\Omega|$, and thus $|[E_n > a]| \to 0$, so we conclude that 
$|[f_n > t]|$ is uniformly in~$t$ small when~$n$ is large.  For such large~$n$ 
we can apply the isoperimetric inequality:
\begin{equation*}
P([f_n > t]; \Omega) \ge c_\Omega |[f_n > t]|^\frac{d-1}{d}
.
\end{equation*}

Now it follows from~\eqref{eq:alphabeta} that $[u_n \le \varepsilon_n] \subset 
[f_n > t]$, so we have
\begin{equation*}
P([f_n > t]; \Omega) \ge c_\Omega |[u_n \le \varepsilon_n]|^\frac{d-1}{d}
.
\end{equation*}

Plugging this estimate into~\eqref{eq:coareaappl}, we obtain
\begin{equation*}
\frac{1}{\varepsilon_n}
\int_{\Omega}
u_n | \nabla f_n |^p
\, \mathrm dx
\ge
\frac{c_\Omega^p (\beta - \alpha)^p|[u_n \le \varepsilon_n]|^{p(d-1)/d}}{|[E_n 
> a] \cap [u_n > \varepsilon_n]|^{p-1}}
.
\end{equation*}
Estimating
\begin{equation*}
|[E_n > a] \cap [u_n > \varepsilon_n]|
\le
|[E_n > a]|
\le 2
|[u_n \le \varepsilon_n]|
\end{equation*}
by virtue of~\eqref{eq:claim2}, we obtain
\begin{equation*}
\frac{1}{\varepsilon_n}
\int_{\Omega}
u_n | \nabla f_n |^p
\, \mathrm dx
\ge
\frac{c_\Omega^p (\beta - \alpha)^p|[u_n \le 
\varepsilon_n]|^{p(d-1)/d}}{2^{p-1}|[u_n \le \varepsilon_n ] |^{p-1}}
= C |[u_n \le \varepsilon]|^{1-p/d}
,
\end{equation*}
where $C$ is independent of $n$.

Combining obtained estimate with~\eqref{eq:p03}, we get:
\begin{equation*}
C
| [ u_n \le \varepsilon_n ] |^{1 - \frac{p}{d}}
\le
\xi_n \sup_{[u_n \le \varepsilon_n]} E_n | [ u_n \le \varepsilon_n ] |
,
\end{equation*}
whence
\begin{equation*}
C
\le
\xi_n \sup_{[u_n \le \varepsilon_n]} E_n
|[u_n \le \varepsilon_n]|^{\frac pd} \to 0
\quad (n \to \infty)
,
\end{equation*}
as $\xi_n \to 0$ and the suprema are bounded by~\eqref{eq:Efg2n}.  This 
contradicts the fact that the left-hand side is a positive constant 
independent of $n$.  Thus, \eqref{eq:case-ii} is impossible.

It remains to show that~\eqref{eq:case-iii} is also impossible.  Assume that 
it holds.

It is easy to check that in this case we have
\begin{equation}
\label{eq:c2a1}
P([f_n > t]; \Omega) \ge p_0 \quad(\alpha < t < \beta)
,
\end{equation}
where $p_0 > 0$ is independent of $t$ and $n$.  Indeed, we have the inclusions
\begin{equation*}
[u_n \le \varepsilon_n] \subset [f_n > t] \subset [E_n > a]
\end{equation*}
and as in our case the measure of the first and third terms goes to~$\mu_0$ as 
$n \to \infty$, we also have
\begin{equation*}
|[f_n > t]| \to \mu_0 \text{ uniformly in } t \in (\alpha, \beta)
.
\end{equation*}
Now it suffices to apply the isoperimetric equality to $[f_n > t]$ if $\mu_0 < 
1/2$ and to $[f_n \le t]$ otherwise.

Plugging~\eqref{eq:c2a1} into~\eqref{eq:coareaappl}, we get
\begin{equation*}
\frac{1}{\varepsilon_n}
\int_{\Omega} u_{n} |\nabla f_{n}|^p
\cd x  \ge
\frac{1}{|[u_{n} > \varepsilon_n] \cap [E_n > a]|^{p-1}}
(\beta - \alpha)^p p_0^p
.
\end{equation*}
Comparing this with~\eqref{eq:p03}, we obtain
\begin{equation*}
  \frac{1}{|[u_{n} > \varepsilon_n] \cap [E_n > a]|^{p-1}}
    (\beta - \alpha)^p p_0^p
  \le
\xi_n \sup_{[u_n \le \varepsilon_n]} E_n | [ u_n \le \varepsilon_n ] | \to 0 
\quad (n \to \infty)
  .
\end{equation*}
As $n \to \infty$, the left-hand side remains bounded away from~0, while the 
right-hand side goes to~0, a contradiction.

\subsection{Generalisations and specialisations}
\label{ss:gs}

We start with the remark that Theorem~\ref{th:yai} can often be applied if $U$ 
is a subset of a space~$X$ of functions defined on~$\Omega$ provided that 
$C^1(\Omega)$ is dense in~$X$ and the integrals~\eqref{eq:int1} 
and~\eqref{eq:int2} are continuous with respect to the topology of~$X$.  
Indeed, if $U_1 = U \cap C^1(\Omega)$ is dense in~$U$, we apply the theorem 
to~$U_1$ and proceed by density to make sure that the same constant works 
for~$U$ as well.  On the other hand, if $U_1$ is not dense in~$U$, we 
replace~$U$ with its small enlargement~$\widetilde U$ in the cone of 
nonnegative functions in~$X$ and apply the same reasoning to~$\widetilde U$.  A more complicated density 
argument is used in the proof of Theorem~\ref{th:eep} given in 
Section~\ref{ss:nid}.

Another question is whether the constant~$C$ can be chosen uniformly with 
respect to $(E, g, f)$ if the latter triple is allowed to vary 
over a set $\mathcal X$.  It turns out that Theorem~\ref{th:yai} can be easily 
extended to handle this case.  Specifically, if the suprema and infima 
in~\eqref{eq:Efg2}--\eqref{eq:Efg4} are additionally taken over $(E, g, f) \in 
\mathcal X$, the constant~$C$ can be chosen independently of~$(E, g, f)$.  The 
proof remains essentially the same.  Assuming the converse, we have violating 
sequences $\{(\widetilde E_n, \tilde g_n, \tilde f_n)\} \subset \mathcal X$ 
and $\{u_n\} \subset U$ such that~\eqref{eq:p01} holds with
\begin{align*}
E_n(x) &= \tilde E_n(x, u_n(x)), \\
f_n(x) &= \tilde f_n(x, u_n(x)), \\
g_n(x) &= \tilde g_n(x, u_n(x))
.
\end{align*}
Moreover, the functions $E_n$, $g_n$, and $f_n$ 
satisfy~\eqref{eq:Efg1n}--\eqref{eq:Efg3n}.  The rest of the proof can be reused 
verbatim.

It should also be noted that the bare~$u$ on the right-hand side 
of~\eqref{eq:yai} can be replaced by a nonnegative function~$v(x, u(x))$.  Of 
course, in this case it no longer makes sense to require that $U$ should 
consist exclusively of positive functions.  The separation from~$0$ should be 
taken in the sense that no sequence~$\{v(\cdot, u_n(\cdot))\}$, where $u_n \in 
U$ and the sequence $\{E_n(\cdot, u_n(\cdot))\}$ is bounded in $L^1(\Omega)$, 
converges to~$0$ in measure.  However, if~$v$ is, for example, an increasing 
function vanishing at~$0$, this new condition is clearly equivalent to the 
original one.

Again, the proof remains essentially unchanged, the sets $[u_n > 
\varepsilon_n]$ and $[u_n \le \varepsilon_n]$ being replaced by $[v_n > 
\varepsilon_n]$ and $[v_n \le \varepsilon_n]$, respectively (here $v_n(x) = 
v(x, u_n(x))$).

Summarising, we have the following strengthened version of 
Theorem~\ref{th:yai}:
\begin{theorem}
\label{th:very-strong}
Let $\Omega$ be a bounded, connected, open domain in $\mathbb R^d$ admitting 
the relative isoperimetric inequality.  Let $p \ge 1$ and $I$ be an interval 
(possibly unbounded).  Let $\mathcal X = \{(E, g, f, v)\}$ be a set of tuples 
such that $E, g, v \in C(\Omega \times I)$, $f \in C^1(\Omega \times I)$, and
\begin{gather}
E \ge 0,\ g \ge 0,\ v \ge 0 \quad \forall (E, g, f, v) \in \mathcal X;
\label{eq:sEfg1}
\\
\lim_{\varepsilon \to 0}
\sup\{ E(x, u) \colon
(E, f, g, v) \in \mathcal X,
(x, u) \in \Omega \times I,
v(x, u) \le \varepsilon \} < \infty
\label{eq:sEfg2}
\\
\inf
\left \{
\frac{g(x, u)}{E(x, u)}
\colon
(E, f, g, v) \in \mathcal X,
(x, u) \in \Omega \times I,
E(x, u) \ne 0,
v(x, u) > \varepsilon
\right \}
> 0 \quad \forall \varepsilon > 0
\label{eq:sEfg3}
\\
\lim_{\varepsilon \to 0}
\inf \{ f(x, u)
\colon (E, f, g, v) \in \mathcal X,
(x, u) \in \Omega \times I,
v(x, u) \le \varepsilon \}
\notag
\\
>
\lim_{\varepsilon \to 0}
\sup \{ f(x, u)
\colon (E, f, g, v) \in \mathcal X,
(x, u) \in \Omega \times I,
E(x, u) \le \varepsilon \}
\label{eq:sEfg4}
\end{gather}
Finally, suppose that a set~$U \subset C^1(\Omega; I)$ satisfies the following 
requirement: for any sequences $\{(E_n, g_n, f_n, v_n)\} \subset \mathcal X\}$ 
and $\{u_n\} \subset U$ such that the sequence $\{E_n(\cdot, u_n(\cdot))\}$ is 
bounded in~$L^1(\Omega)$, the sequence~$\{v_n(\cdot, u_n(\cdot))\}$ does not 
converge to $0$ in measure.  Then there exists a constant $C$ depending only 
on $\Omega$, $p$, $U$ and $\mathcal X$ such that
\begin{multline*}
\int_\Omega
E(x, u(x)) \cd x
\\
\le
C
\left(
\int_\Omega
\big(
g(x, u(x))
+
v(x, u(x))| \nabla_x f(x, u(x))|^p
\big) \cd x
\right)
\quad
((E, g, f, v) \in \mathcal X, u \in U)
.
\end{multline*}
\end{theorem}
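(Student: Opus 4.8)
\subsection*{Proof proposal}

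The plan is to argue by contradiction, reducing everything to the proof of Theorem~\ref{th:yai}, which the present statement is designed to accommodate almost verbatim. Suppose no constant $C$ depending only on $\Omega$, $p$, $U$, $\mathcal X$ works. Pick $\varepsilon_n \downarrow 0$ and, using~\eqref{eq:sEfg3}, pick $\xi_n \downarrow 0$ with
\begin{equation*}
0 < \xi_n \le \inf\Big\{ \frac{g(x,u)}{E(x,u)} : (E,g,f,v) \in \mathcal X,\ (x,u) \in \Omega \times I,\ E(x,u) \ne 0,\ v(x,u) > \varepsilon_n \Big\}.
\end{equation*}
Then one can find tuples $(\widetilde E_n, \widetilde g_n, \widetilde f_n, \widetilde v_n) \in \mathcal X$ and functions $u_n \in U$ for which the failure of the desired inequality is quantified as in~\eqref{eq:p01}, namely
\begin{equation*}
\int_\Omega \big( g_n + v_n | \nabla f_n |^p \big) \cd x \le \varepsilon_n \xi_n \int_\Omega E_n \cd x,
\end{equation*}
where now $E_n(x) = \widetilde E_n(x, u_n(x))$, $g_n(x) = \widetilde g_n(x, u_n(x))$, $f_n(x) = \widetilde f_n(x, u_n(x))$, and $v_n(x) = \widetilde v_n(x, u_n(x))$. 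As before, the goal is to deduce that $\{E_n\}$ is bounded in $L^1(\Omega)$ and $\{v_n\}$ converges to $0$ in measure, contradicting the standing hypothesis on $U$ and $\mathcal X$.

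The only place where the \emph{uniformity} of the hypotheses over $\mathcal X$ enters is the verification that $E_n$, $g_n$, $f_n$, $v_n$ satisfy the $n$-uniform estimates used in the proof of Theorem~\ref{th:yai}, with the superlevel and sublevel sets $[u_n > \varepsilon_n]$, $[u_n \le \varepsilon_n]$ systematically replaced by $[v_n > \varepsilon_n]$, $[v_n \le \varepsilon_n]$. Concretely: \eqref{eq:sEfg1} gives $E_n, g_n, v_n \ge 0$; on $[v_n \le \varepsilon_n]$ one has $\widetilde v_n(x,u_n(x)) \le \varepsilon_n$, so~\eqref{eq:sEfg2} yields $\varlimsup_n \sup_{[v_n \le \varepsilon_n]} E_n < \infty$ (the analogue of~\eqref{eq:Efg2n}); the choice of $\xi_n$ together with~\eqref{eq:sEfg3} gives $g_n \ge \xi_n E_n$ on $[v_n > \varepsilon_n]$ (the analogue of~\eqref{eq:Efg3n}); and~\eqref{eq:sEfg4} produces the separation $\varliminf_n \inf_{[v_n \le \varepsilon_n]} f_n > \varlimsup_{\varepsilon \to 0} \sup_{[E_n < \varepsilon]} f_n$ (the analogue of~\eqref{eq:Efg4n}), from which, exactly as in~\eqref{eq:alphabeta}, one extracts $a > 0$ and $\alpha < \beta$ independent of $n$ with $\sup_{[E_n \le a]} f_n \le \alpha < \beta \le \inf_{[v_n \le \varepsilon_n]} f_n$ for large $n$. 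Each of these four bounds is uniform in $n$ precisely because the corresponding suprema and infima in~\eqref{eq:sEfg1}--\eqref{eq:sEfg4} are taken over all of $\mathcal X$, so replacing a single triple by a family costs nothing.

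With these estimates established, the remainder of the argument is the one already given, read with $[v_n \le \varepsilon_n]$, $[v_n > \varepsilon_n]$ in place of $[u_n \le \varepsilon_n]$, $[u_n > \varepsilon_n]$: the chain leading to~\eqref{eq:p02}--\eqref{eq:p031} shows $\{E_n\}$ is bounded in $L^1(\Omega)$; Lemmas~\ref{lem:claim1} and~\ref{lem:claim2} go through verbatim; and one distinguishes the three possibilities~\eqref{eq:case-i}, \eqref{eq:case-ii}, \eqref{eq:case-iii} for $\lim_n |[v_n \le \varepsilon_n]|$. Case~\eqref{eq:case-i} forces $v_n \to 0$ in measure since $\varepsilon_n \to 0$, which is the contradiction sought; cases~\eqref{eq:case-ii} and~\eqref{eq:case-iii} are ruled out by the coarea/isoperimetric estimate of Lemma~\ref{lem:coareaappl}, whose proof rests only on the inclusions $[f_n > t] \subset [E_n > a]$ and $[v_n \le \varepsilon_n] \subset [f_n > t]$ for $t \in (\alpha,\beta)$ and the level-set structure of $f_n$ — none of which is disturbed by passing from $u$ to $v$ or by letting the tuple vary. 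I do not anticipate a genuinely new obstacle: the only real work is the bookkeeping of the previous paragraph, i.e.\ confirming that the passage to a family $\mathcal X$ and a weight $v$ preserves the $n$-uniformity of~\eqref{eq:Efg1n}--\eqref{eq:Efg3n} and~\eqref{eq:alphabeta}, which the uniform formulation of~\eqref{eq:sEfg1}--\eqref{eq:sEfg4} guarantees by design.
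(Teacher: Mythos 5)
Your proposal is correct and follows exactly the route the paper indicates in Section~\ref{ss:gs}: argue by contradiction, produce violating sequences $(\widetilde E_n,\widetilde g_n,\widetilde f_n,\widetilde v_n)\in\mathcal X$ and $u_n\in U$, and rerun the proof of Theorem~\ref{th:yai} with $[u_n\gtrless\varepsilon_n]$ replaced by $[v_n\gtrless\varepsilon_n]$, the uniformity over $\mathcal X$ in \eqref{eq:sEfg1}--\eqref{eq:sEfg4} being precisely what makes the $n$-uniform estimates \eqref{eq:Efg1n}--\eqref{eq:Efg3n} and \eqref{eq:alphabeta} survive. The paper "leaves the proof to the reader," and your write-up supplies the intended bookkeeping with no gaps.
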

The proof is left to the reader.

Another option would be to allow for nonnegative instead of strictly 
positive~$u$ in Theorem~\ref{th:yai}.  In this case one assumes that $E \in 
C(\Omega \times [0, \infty))$ and that the supremum in~\eqref{eq:Efg2} is taken 
over $0 \le u \le \varepsilon$ and $x \in \Omega$.  The resulting inequality 
differs from~\eqref{eq:yai} in that the integral on the right-hand side is 
taken over $[u > 0]$.  The only modification needed in the proof is that 
whenever~$g$ or~$u | \nabla f|^p$ are integrated over~$\Omega$, the domain of 
integration should be changed to~$[u > 0]$. Note that this does not fit into the previous theorem because $f$ can be undefined on $[u=0]$. 

We conclude by showing that Theorem~\ref{th:yai} is applicable in a situation 
relevant for gradient flows.  In the subsequent formulation, $f_u$ and $E_u$ 
denote the derivatives of the functions~$f$ and~$E$, respectively, with respect 
to their second argument.

\begin{theorem}
\label{th:eepab}
Suppose that functions $E \in C(\overline \Omega \times [0, \infty))$, $f 
\in C^1(\overline \Omega \times (0, \infty))$, and $m \in C(\overline \Omega)$ 
satisfy
\begin{gather}
E(x, u) \ge 0, \quad (x, u) \in \overline \Omega \times [0, \infty);
\label{eq:eep-E}
\\
m(x) > 0, \quad x \in \overline \Omega;
\label{eq:eep-m}
\\
E(x, m(x)) = 0, \quad x \in \Omega;
\label{eq:eep-Em}
\\
E_u(x, u) = -f(x, u), \quad (x, u) \in \Omega \times (0, \infty);
\label{eq:eep-Ef}
\\
f_u(x, u) < 0, \quad (x, u) \in \overline \Omega \times (0, \infty)
\label{eq:eep-f}
\end{gather}
and let $U \subset C^1(\Omega)$ be a set of strictly positive functions having 
the property that no sequence $\{u_n\} \subset U$ such that $\{E(\cdot, 
u_n(\cdot))\}$ is bounded in~$L^1(\Omega)$, converges to~$0$ in measure.  
Finally, let $\sigma \in (0, \min_{\overline \Omega} m)$ and
\begin{equation*}
v_\sigma(\xi) = \frac{\xi^2}{\max(\xi, \sigma)}
.
\end{equation*}
Then we have
\begin{equation}
\int_\Omega
E(x, u(x)) \cd x
\le C
\int_\Omega
v_\delta(u(x))
\big(
(f(x, u(x)))^2
+
|\nabla_x f(x, u(x))|^2
\big)
\cd x
\quad (u \in U)
,
\end{equation}
where~$C > 0$ depends on~$\Omega$, $f$, $\sigma$, and~$U$.
\end{theorem}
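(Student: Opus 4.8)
The plan is to derive the inequality from Theorem~\ref{th:yai}, in the version discussed in Section~\ref{ss:gs} in which the bare factor $u$ on the right of~\eqref{eq:yai} is replaced by a nonnegative coefficient $v(x,u)$. I would apply that result with $p=2$, with the given $E$ and $f$, and with
\[
g(x,u)=v_\sigma(u)\,f(x,u)^2,\qquad v(x,u)=v_\sigma(u),
\]
so that the right-hand side of~\eqref{eq:yai} becomes precisely $\int_\Omega v_\sigma(u)\big(f^2+|\nabla_x f|^2\big)\cd x$. Since $v_\sigma$ is continuous, strictly increasing and vanishes at $0$, a sequence $v_\sigma(u_n(\cdot))$ converges to $0$ in measure if and only if $u_n$ does; hence the separation-from-zero hypothesis imposed on $U$ in the present statement is exactly the one required by that version of Theorem~\ref{th:yai}. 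It then remains to verify the structural hypotheses~\eqref{eq:Efg1}--\eqref{eq:Efg4} for the triple $(E,g,f)$.

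Hypotheses~\eqref{eq:Efg1} and~\eqref{eq:Efg2} are immediate: $E\ge0$, $g=v_\sigma f^2\ge0$, and $E\in C(\overline\Omega\times[0,\infty))$ makes the supremum in~\eqref{eq:Efg2} bounded as $\varepsilon\to0$ (cf.\ Remark~\ref{rem:hyp}). Before checking the remaining two, I would record the elementary consequences of $f_u<0$: the function $E(x,\cdot)$ is $C^1$ on $(0,\infty)$ with $E_u=-f$, hence $E_{uu}=-f_u>0$, so it is strictly convex; being nonnegative and vanishing at $m(x)>0$, it attains its minimum there, so $f(x,m(x))=-E_u(x,m(x))=0$, the point $m(x)$ is its only zero, and $E(x,0)>0$. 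In particular $\{E=0\}=\{(x,m(x)):x\in\overline\Omega\}$ is compact, so Remark~\ref{rem:hyp} gives that the right-hand side of~\eqref{eq:Efg4} equals $\max_{x}f(x,m(x))=0$; its left-hand side, since $f(x,\cdot)$ is decreasing, equals $\lim_{\varepsilon\to0}\min_{x}f(x,\varepsilon)\ge\min_{x}f\big(x,\tfrac12\min_{\overline\Omega}m\big)>0$. Thus~\eqref{eq:Efg4} holds.

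The core of the argument is~\eqref{eq:Efg3}, i.e.\ that for every $\varepsilon>0$
\[
\inf\Big\{\,\tfrac{v_\sigma(u)\,f(x,u)^2}{E(x,u)}\ \colon\ x\in\overline\Omega,\ u>\varepsilon,\ E(x,u)\ne0\,\Big\}>0 .
\]
I would fix $M=\max_{\overline\Omega}m+1$ and treat the ranges $u\in(\varepsilon,M]$ and $u>M$ separately. On the compact set $\overline\Omega\times[\varepsilon,M]$ I would show that $h(x,u):=f(x,u)^2/E(x,u)$, defined for $u\ne m(x)$, extends continuously by $h(x,m(x)):=-2f_u(x,m(x))$: writing $f(x,u)=\int_{m(x)}^u f_u(x,s)\cd s$ and $E(x,u)=-\int_{m(x)}^u f(x,s)\cd s$, continuity of $f_u$ yields $f(x,u)/(u-m(x))\to f_u(x_0,m(x_0))$ and $E(x,u)/(u-m(x))^2\to-\tfrac12 f_u(x_0,m(x_0))$ as $(x,u)\to(x_0,m(x_0))$, whence the limit for $h$. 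As $h$ is then continuous and strictly positive on a compact set it is bounded below by some $c_1(\varepsilon)>0$, and $v_\sigma(u)\ge v_\sigma(\varepsilon)>0$ for $u\ge\varepsilon$, so the quotient is $\ge v_\sigma(\varepsilon)c_1(\varepsilon)$ there. For $u>M$ one has $v_\sigma(u)=u$, and since $|f(x,\cdot)|=-f(x,\cdot)$ is positive and increasing on $(m(x),\infty)$, $E(x,u)=\int_{m(x)}^u|f(x,s)|\cd s\le u\,|f(x,u)|$, so the quotient is $\ge|f(x,u)|\ge\min_{x}|f(x,M)|>0$. Taking the smaller of the two lower bounds gives~\eqref{eq:Efg3}, and Theorem~\ref{th:yai} then yields the claim with a constant depending only on $\Omega$, $f$, $\sigma$ and $U$.

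I expect the main obstacle to be exactly the verification of~\eqref{eq:Efg3} near the equilibrium $u\approx m(x)$: here numerator and denominator both vanish quadratically ($f^2\sim f_u^2(u-m)^2$, $E\sim\tfrac12|f_u|(u-m)^2$) and their ratio must be controlled uniformly in $x$, which is what the continuous-extension argument above does. The complementary regime $u\to\infty$ is where the particular shape of $v_\sigma$ — namely $v_\sigma(u)=u$ for $u\ge\sigma$, so that its numerator $u^2$ cancels the linear growth of $E/|f|$ — keeps $v_\sigma(u)f^2/E$ bounded away from $0$ without any extra growth assumption on $f$.
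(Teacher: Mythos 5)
Your proof is correct and follows the same overall strategy as the paper — reduce to Theorem~\ref{th:very-strong} (the $v$-weighted extension of Theorem~\ref{th:yai}) with $p=2$, $g=v_\sigma f^2$ and weight $v_\sigma$, and then verify the structural hypotheses — but your verification of the crucial condition~\eqref{eq:Efg3} is genuinely different from the paper's. The paper applies Cauchy's mean value theorem to the ratio $g/E$ (using $g(x,m(x))=E(x,m(x))=0$) to get the pointwise expression $g/E = -f(x,\xi) - 2\xi f_u(x,\xi)$ at an intermediate $\xi$ between $u$ and $m(x)$, and then controls this by uniform continuity of $f_u$ near the curve $u=m(x)$, splitting into the cases $|u-m(x)|<\delta$, $u\ge m(x)+\delta$, and $\varepsilon\le u\le m(x)-\delta$. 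You instead split the range of $u$ into a bounded part $(\varepsilon, M]$ and an unbounded tail $u>M$. On the bounded part you show that $h=f^2/E$ extends continuously across the diagonal $u=m(x)$ with the limit value $-2f_u(x,m(x))>0$ (from the quadratic Taylor behaviour of $f$ and $E$ at $u=m(x)$), so a compactness argument gives a uniform lower bound, which $v_\sigma(\varepsilon)>0$ then preserves. On the tail you use that $v_\sigma(u)=u$ for $u\ge\sigma$ together with the bound $E(x,u)\le u|f(x,u)|$ (since $|f(x,\cdot)|$ increases past $m(x)$), yielding $v_\sigma f^2/E\ge|f(x,u)|\ge\min_x|f(x,M)|>0$. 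Your decomposition makes very explicit why the specific shape of $v_\sigma$ matters — its linear growth for large $u$ exactly cancels the linear growth of $E/|f|$ — while the paper's single MVT formula treats both regimes at once but then requires the more delicate uniform-continuity case analysis. Both routes are valid; the end constant has the same dependence on $\Omega$, $f$, $\sigma$, and $U$.
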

\begin{remark}
Observe that under the hypotheses of Theorem~\ref{th:eepab}, the functions~$E$ 
and~$m$ are uniquely determined by~$f$.  Indeed, if $x \in \Omega$ is fixed, 
$E(x, u)$ as a function of~$u$ attains its minimum at $m(x) > 0$, so $E_u(x, 
m(x)) = 0$, i.~e., $f(x, m(x)) = 0$, according to~\eqref{eq:eep-Ef}.  This 
uniquely defines~$m(x)$, as it follows from~\eqref{eq:eep-f} that~$f(x, u)$ 
strictly decreases with respect to~$u$.  Now, $E(x, u)$ is the antiderivative 
of~$-f(x, u)$ with respect to~$u$ vanishing at~$m(x)$.
\end{remark}
\begin{proof}
We check the hypotheses of Theorem~\ref{th:very-strong} with $I = (0, 
\infty)$, $p = 2$, $g(x, u) = v_\sigma(u)(f(x, u))^2$, and the set $\mathcal X$ 
consisting of the single tuple~$(E, g, f, v_\sigma)$.  Clearly, we 
have~\eqref{eq:sEfg1}, while~\eqref{eq:sEfg2}--\eqref{eq:sEfg4} are equivalent 
to~\eqref{eq:Efg2}--\eqref{eq:Efg4}.

Recalling Remark~\ref{rem:hyp}, we see that~\eqref{eq:Efg2} holds.

Let us check~\eqref{eq:Efg4}.  Fix $x \in \Omega$.  The function~$E(x, u)$ is 
strictly convex in~$u$ and attains its zero minimum only at $u = m(x)$.  As 
$f(x, m(x)) = 0$, we see that
\begin{equation*}
\lim_{\varepsilon \to 0} \sup_{E < \varepsilon} f = \max_{E = 0} f = 0.
\end{equation*}
On the other hand, as~$f$ decreases with respect to~$u$, we have
\begin{align*}
\lim_{\varepsilon \to 0}
\inf_{\substack{0 < u \le \varepsilon \\ x \in \Omega}} f(x, u)
& \ge
\inf_{x \in \Omega} f(x, \sigma)
\\
& = \inf_{x \in \Omega} \int_{\sigma}^{m(x)} (- f_u(x, u)) \cd u
\\
& \ge
\min_{\substack{\sigma \le u \le m(x) \\ x \in \overline \Omega}}
(- f_u(x, u))
\min_{x \in \overline \Omega} (m(x) - \sigma) > 0
,
\end{align*}
so~\eqref{eq:Efg4} indeed holds.

It remains to check~\eqref{eq:Efg3}.  Without loss of generality, assume that $\varepsilon > 0$ is such that
\begin{gather}
\varepsilon < \frac 12
\min_{x \in \overline \Omega}
(-2m(x) f_u(x, m(x))),
\label{eq:tmp-alpha1}
\\
\varepsilon < 
\min_{x \in \overline \Omega}
(- f_u(x, m(x))).
\label{eq:tmp-alpha2}
\end{gather}
By Cauchy's mean value theorem, for any $x \in \Omega$, $u > \sigma$, $u \ne 
m(x)$, we have
\begin{equation}
\label{eq:tmp-5}
\frac{g(x, u)}{E(x, u)}
=
\frac{g(x, u)- g(x, m(x))}{E(x, u)- E(x, m(x))}
=
\frac{g_u(x, \xi_{x,u})}{E_u(x, \xi_{x,u})}
= - f(x, \xi_{x, u}) - 2\xi_{x, u} f_u(x, \xi_{x, u})
,
\end{equation}
where $\xi_{x, u}$ is some point between~$u$ and~$m(x)$.

By uniform continuity, there exists $\delta \in (0, \min_{\overline \Omega} m 
- \sigma)$ such that
\begin{equation*}
| \xi - m(x) | < \delta
\end{equation*}
implies
\begin{gather}
| -f(x, \xi) - 2\xi f_u(x, \xi) + 2m(x) f_u(x, m(x)) | < \varepsilon,
\label{eq:tmp-1}
\\
| f_u(x, \xi) - f_u(x, m(x))| < \varepsilon
.
\label{eq:tmp-2}
\end{gather}
Then from~\eqref{eq:tmp-1} and~\eqref{eq:tmp-alpha1} we see that
\begin{equation}
\label{eq:tmp-3}
|\xi - m(x)| < \delta \Rightarrow -f(x, \xi) - 2\xi f_u(x, \xi) > \varepsilon
.
\end{equation}
Further, using~\eqref{eq:tmp-2} and~\eqref{eq:tmp-alpha2}, we have
\begin{equation*}
-f(x, m(x) + \delta) =
\int_{m(x)}^{m(x) + \delta} (-f_u(x, u) \cd u) \ge \varepsilon \delta
,
\end{equation*}
whence, recalling that $f_u$ is negative and~$f$ is decreasing, we conclude
\begin{equation}
\label{eq:tmp-4}
\xi \ge m(x) + \delta \Rightarrow -f(x, \xi) - 2 \xi f_u(x, \xi) > \varepsilon 
\delta
.
\end{equation}
Now, if $|u - m(x)| < \delta$, the point~$\xi_{x, u}$ also satisfies $|\xi - 
m(x)| < \delta$, so we use~\eqref{eq:tmp-3} to conclude from~\eqref{eq:tmp-5} 
that
\begin{equation}
\label{eq:tmp-6}
\frac{g(x, u)}{E(x, u)} > \varepsilon
.
\end{equation}
If $u \ge m(x) + \delta$, then either $m(x) < \xi_{x, u} < m(x) + \delta$ and 
we again obtain~\eqref{eq:tmp-6}, or $\xi_{x, u} \ge m(x) + \delta$ and then 
we use~\eqref{eq:tmp-4} to get
\begin{equation*}
\frac{g(x, u)}{E(x, u)} > \varepsilon \delta
.
\end{equation*}
Thus,
\begin{equation*}
\inf_{\substack{u > \varepsilon \\ x \in \Omega \\ E(x, u) \ne 0}}
\frac{g(x, u)}{E(x, u)}
\ge
\min
\left(
\min_{\substack{\varepsilon \le u \le m(x) - \delta \\ x \in \overline 
\Omega}}
\frac{g(x, u)}{E(x, u)}
,
\varepsilon
,
\varepsilon \delta
\right)
> 0
,
\end{equation*}
since the function $g/E$ is continuous and positive on the compact set
\begin{equation*}
\{(x, u) \colon x \in \overline \Omega, \varepsilon \le u \le m(x) - \delta \}
.
\end{equation*}
We have showed that~\eqref{eq:Efg3} holds.

Thus, the hypotheses of Theorem~\ref{th:very-strong} are fulfilled and the 
inequality follows.
\end{proof}

\section{Technicalities}
\subsection{Positive classical solutions}

Let
\begin{equation*}
\theta(s) =
\begin{cases}
1 & \text{if } s > 0, \\
0 & \text{if } s \le 0
\end{cases}
\end{equation*}
be the Heaviside step function.

\begin{lemma}
\label{lem:leones}
If nonnegative $u, \hat u \in C^\infty(\overline \Omega)$ satisfy the no-flux 
boundary condition \eqref{eq:p2}, then
\begin{equation}
\label{eq:leones}
\int_\Omega
\theta(u - \hat u) \Div(u \nabla f - \hat u \nabla \hat f) \cd x \ge 0
,
\end{equation}
where $f$ and $\hat f$ stand for $f(x, u(x))$ and $f(x, \hat u(x))$, 
respectively.
\end{lemma}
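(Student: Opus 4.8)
The plan is to read \eqref{eq:leones} as a Kato-type inequality: regularise the Heaviside function, integrate by parts, isolate a manifestly nonnegative diffusion term, and show the remaining terms vanish as the regularisation is removed. Write $w=u-\hat u\in C^\infty(\overline\Omega)$ and $V=u\nabla f-\hat u\nabla\hat f$, so the left-hand side of \eqref{eq:leones} is $\int_\Omega\theta(w)\Div V\cd x$. I would fix, for $\varepsilon>0$, a smooth nondecreasing $p_\varepsilon\colon\mathbb R\to[0,1]$ with $p_\varepsilon\equiv0$ on $(-\infty,0]$, $p_\varepsilon\equiv1$ on $[\varepsilon,\infty)$, $p_\varepsilon'>0$ exactly on $(0,\varepsilon)$, and $\|p_\varepsilon'\|_\infty\le2/\varepsilon$. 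Since $0\le p_\varepsilon(w)\le1$ and $p_\varepsilon(w)\to\theta(w)$ pointwise, dominated convergence gives $\int_\Omega p_\varepsilon(w)\Div V\cd x\to\int_\Omega\theta(w)\Div V\cd x$, so it suffices to show $\liminf_{\varepsilon\to0}\int_\Omega p_\varepsilon(w)\Div V\cd x\ge0$. Integrating by parts and using that \emph{both} $u$ and $\hat u$ satisfy the no-flux condition \eqref{eq:p2} --- whence $V\cdot\nu=u\,\partial_\nu f-\hat u\,\partial_\nu\hat f=0$ on $\partial\Omega$ --- the boundary term drops and $\int_\Omega p_\varepsilon(w)\Div V\cd x=-\int_\Omega p_\varepsilon'(w)\,\nabla w\cdot V\cd x$.

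The key is a pointwise identity for $V$. Since $\nabla f=f_x(x,u)+f_u(x,u)\nabla u$ and $\Phi_u=-uf_u$, one has $u\nabla f=uf_x(x,u)-\Phi_u(x,u)\nabla u$, and rearranging the analogous expression for $\hat u\nabla\hat f$ gives
\[
V=-\Phi_u(x,u)\,\nabla w+E,\qquad E:=\bigl(uf_x(x,u)-\hat uf_x(x,\hat u)\bigr)-\bigl(\Phi_u(x,u)-\Phi_u(x,\hat u)\bigr)\nabla\hat u .
\]
On $\{0<w<\varepsilon\}$, which contains the support of $p_\varepsilon'(w)$, we have $u>\hat u\ge0$, hence $\Phi_u(x,u)=-uf_u(x,u)>0$ by \eqref{eq:f2}; therefore
\[
\int_\Omega p_\varepsilon(w)\Div V\cd x=\int_\Omega p_\varepsilon'(w)\,\Phi_u(x,u)\,|\nabla w|^2\cd x-\int_\Omega p_\varepsilon'(w)\,\nabla w\cdot E\cd x\ \ge\ -\int_\Omega p_\varepsilon'(w)\,\nabla w\cdot E\cd x .
\]
The decisive point is that $E$ vanishes wherever $u=\hat u$ --- i.e.\ on $\{w=0\}$ --- since both brackets do; hence $E$ is small on $\{0<w<\varepsilon\}$, and the last integral will tend to $0$.

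To finish in the regular regime: as long as the values of $u,\hat u$ lie in a compact subset of $(0,\infty)$, the maps $u\mapsto uf_x(x,u)$ and $u\mapsto\Phi_u(x,u)=-uf_u(x,u)$ are Lipschitz in $u$ uniformly in $x$ (here $f\in C^2$ by \eqref{eq:f1a}), so $|E|\le Cw$ on $\{0<w<\varepsilon\}$; then $|p_\varepsilon'(w)\,\nabla w\cdot E|\le(2/\varepsilon)|\nabla w|(C\varepsilon)\mathbf 1_{\{0<w<\varepsilon\}}\le2C|\nabla w|\in L^1(\Omega)$ while $p_\varepsilon'(w)\,\nabla w\cdot E\to0$ pointwise, so $\int_\Omega p_\varepsilon'(w)\,\nabla w\cdot E\cd x\to0$ by dominated convergence, giving $\liminf_{\varepsilon\to0}\int_\Omega p_\varepsilon(w)\Div V\cd x\ge0$. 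The hard part will be the degeneracy set $\{u=0\}\cup\{\hat u=0\}$, where $f_u$ and $\Phi_u$ may be unbounded, so that neither $|E|\le Cw$ nor even $\Div V\in L^1(\Omega)$ is immediate; there one must exploit the structural hypotheses --- \eqref{eq:f1b} and \eqref{eq:f1d} give that $uf$ and $uf_x$ extend continuously to $\overline\Omega\times[0,\infty)$ with $uf_x|_{u=0}=0$, and \eqref{eq:f5} controls $uf_u$, $uf_{xu}$ by an $L^1_{\mathrm{loc}}$ function, making $\Phi(x,u)$, $\Phi_x(x,u)$, $uf_x(x,u)$ continuous up to $u=0$ --- and rewrite $V=-\nabla\bigl(\Phi(x,u)-\Phi(x,\hat u)\bigr)+R$ with $R$ built only from the (bounded, diagonal-vanishing) differences of $\Phi_x$ and $uf_x$, controlling $\int_\Omega p_\varepsilon'(w)|\nabla w|\cd x$ through the coarea formula applied to the smooth function $w$. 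Alternatively, one proves \eqref{eq:leones} first for strictly positive $u,\hat u$, as above, and recovers the general case by approximation. This degenerate analysis is where essentially all the effort lies.
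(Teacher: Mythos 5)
Your proof takes a genuinely different route from the paper's. The paper argues geometrically: it sets $\Upsilon=[u>\hat u]$, assumes first that $0$ is a regular value of $u-\hat u$, rewrites the left-hand side as $\int_{\Upsilon\cap\Omega}\Div V\cd x$ and applies the Gauss--Green theorem; after the no-flux condition kills the outer boundary terms, what remains is $\int_{\partial\Upsilon\cap\Omega}u\,\nabla(f-\hat f)\cdot\nu_\Upsilon\cd\mathcal H^{d-1}$, which is $\ge 0$ because $\Upsilon=[f-\hat f<0]$ (monotonicity of $f$ in $u$) forces $\nabla(f-\hat f)$ to be an outward normal along $\partial\Upsilon$. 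The non-regular-value case is handled by shifting $u\mapsto u+\varepsilon_n$ (choosing $\varepsilon_n\downarrow 0$ so that $0$ is a regular value of $u+\varepsilon_n-\hat u$) and passing to the limit via the right-continuity of $\theta$. Your Kato-type argument instead regularizes the Heaviside, integrates by parts, and splits $V=-\Phi_u(x,u)\nabla w+E$ so that the leading term is pointwise nonnegative and $E$ vanishes on $\{w=0\}$; the decomposition, the sign of $\Phi_u$ on $\{w>0\}$, and the dominated-convergence step are all correct. The trade-off: your route avoids Sard's theorem and the geometric-measure-theoretic machinery (reduced boundaries, Gauss--Green measures) and exhibits the dissipation term explicitly, while the paper's route stays at the level of the divergence theorem and handles, in one stroke, both the possible irregularity of $\partial\Upsilon$ and the positivity of $u$ (since $u+\varepsilon_n\ge\varepsilon_n>0$).

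That last point is exactly where your proof, as written, is incomplete. You correctly flag that near $\{u=0\}\cup\{\hat u=0\}$ the maps $s\mapsto sf_x(x,s)$ and $s\mapsto\Phi_u(x,s)=-sf_u(x,s)$ need not be Lipschitz, so the bound $|E|\le Cw$ and hence the DCT step fail; but you only sketch two possible repairs (a $\Phi$-based rewriting of $V$, and approximation by strictly positive functions) without carrying either through. To be fair, the paper's own limit step (``$f_n\to f$ in $C^2(\overline\Omega)$'') also tacitly assumes $u>0$ on $\overline\Omega$, and in every actual application of the lemma — the $L^1$-contraction estimates for classical solutions with data pinched between $\kappa$ and $1/\kappa$, and the restricted contraction against the positive stationary states $u_c$ — both $u$ and $\hat u$ are bounded away from zero, so the strictly-positive case is all that is ever needed. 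Still, for a self-contained proof of the lemma as stated you should either restrict the hypothesis to strictly positive $u,\hat u$ or actually execute one of your two sketched repairs rather than asserting that ``this is where the effort lies.''
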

\begin{proof}
Without loss of generality, the functions $u$ and $\hat u$ are defined and 
smooth on $\mathbb{R}^d$. Consider the set $\Upsilon:= [u - \hat u > 0]$. 
First let us assume that $0$ is a regular value of the function $u - \hat u$,  
then the boundary of $\Upsilon$ is smooth. Employing de Giorgi's Gauss-Green 
formula \cite[Theorem 15.9]{Mag12} and the formula for the Gauss-Green measure 
of an intersection \cite[Theorem 16.3]{Mag12}, we compute
\begin{multline*}
\int_\Omega \theta(u - \hat u) \Div (u \nabla f - \hat u \nabla \hat f) \cd x
 = \int_{\Upsilon\cap\Omega} \Div (u \nabla f - \hat u \nabla \hat f) \cd x
\\ = \int_{\partial^*(\Upsilon\cap\Omega)} (u \nabla f - \hat u \nabla \hat f) \cdot \nu_{\Upsilon\cap\Omega} \cd \mathcal H^{d-1}
= \int_{\partial \Upsilon\cap\Omega} (u \nabla f - \hat u \nabla \hat f) \cdot \nu_{\Upsilon} \cd \mathcal H^{d-1} \\ +  \int_{\Upsilon\cap\partial \Omega} (u \nabla f - \hat u \nabla \hat f) \cdot \nu_{\Omega} \cd \mathcal H^{d-1} +  \int_{[\nu_{\Upsilon}=\nu_{\Omega}]} (u \nabla f - \hat u \nabla \hat f) \cdot \nu_{\Omega} \cd \mathcal H^{d-1},
\end{multline*}
where $\nu_{\Upsilon\cap\Omega}$ is the measure-theoretic outward unit normal vector along the reduced boundary $\partial^*(\Upsilon\cap\Omega)$ of the intersection \cite{Mag12}.  Due to the no-flux boundary condition, the last two integrals vanish. On~$\partial \Upsilon\cap\Omega$, we have $u = \hat u$ and consequently, $f = \hat f$.  Thus, we can 
write
\begin{equation}
\label{eq:leones-1}
\int_\Omega \theta(u - \hat u) \Div (u \nabla f - \hat u \nabla \hat f) \cd x
= \int_{\partial \Upsilon\cap\Omega}  u \nabla (f - \hat f) \cdot \nu_{\Upsilon} \cd \mathcal H^{d-1}
.
\end{equation}
Due to the monotonicity of~$f$, we have $\Upsilon = [f - \hat f < 
0]$.  We see then that whenever $\nabla (f - \hat f) \ne 0$ on~$\partial \Upsilon$, 
$\nabla(f - \hat f)$ is an outward normal vector along $\partial \Upsilon$.  Thus, $\nabla (f - \hat f) 
\cdot \nu_{\Upsilon} \ge 0$ and equality~\eqref{eq:leones-1} gives~\eqref{eq:leones}.

In the general case, take a decreasing sequence~$\varepsilon_n \to 0$ such 
that $0$ is a regular value of $u  - \hat u- \varepsilon_n$.  Set
\begin{equation*}
\hat u_n = \hat u + \varepsilon_n, \ \hat f_n = f(x,\hat  u_n(x)).
\end{equation*}
By the above, we have
\begin{equation}
\label{eq:leones-2}
\int_\Omega
\theta(u - \hat u_n) \Div(u \nabla f - \hat u_n \nabla \hat f_n) \cd x \ge 0
.
\end{equation}
As~$\theta$ is left-continuous, we have
\begin{equation*}
\theta(u - \hat u_n) \to \theta(u - \hat u) \text{ pointwise in } \Omega
;
\end{equation*}
moreover, it is clear that
\begin{equation*}
\hat f_n \to \hat f \text{ in } C^2(\overline \Omega)
.
\end{equation*}
Passing to the limit in~\eqref{eq:leones-2}, we obtain~\eqref{eq:leones}.
\end{proof}

\begin{lemma}[$L^1$-contraction for positive classical solutions]
\label{lem:l1c}
Let $u$ and $\hat u$ be classical solutions of~\eqref{eq:p1}--\eqref{eq:p3} on 
$[0, T]$ with different initial data.  Suppose that $u$ and $\hat u$ satisfy
\begin{gather*}
\kappa \le u \le \frac{1}{\kappa}, \ \kappa \le \hat u \le \frac{1}{\kappa} 
\quad \text{in } Q_T
\end{gather*}
with some $\kappa > 0$ and let $L_\kappa > 0$ be such that
\begin{equation}
\label{eq:l1c-1}
|u_1 f(x, u_1) - u_2 f(x, u_2) | \le L_\kappa | u_1 - u_2 | \quad x \in 
\overline \Omega, \ \forall u_1, u_2 \in \left(\kappa, \frac{1}{\kappa}
\right).
\end{equation}
Then for a.~a.\ $t > 0$,
\begin{equation}
\label{eq:l1c-2}
\partial_t \int_\Omega (u - \hat u)^+ \cd x
\le L_\kappa \int_\Omega (u - \hat u)^+ \cd x
.
\end{equation}
\end{lemma}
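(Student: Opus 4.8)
\emph{Proof strategy.} The plan is to run the standard $L^1$-contraction argument: subtract the two equations, test with the Heaviside function of the difference $u-\hat u$, and treat the resulting diffusive and reactive contributions separately --- the diffusive one is exactly the quantity controlled by Lemma~\ref{lem:leones}, while the reactive one is controlled by the local Lipschitz bound~\eqref{eq:l1c-1}. Writing~\eqref{eq:p1} for $u$ and for $\hat u$ in the form $\partial_t u=-\Div(u\nabla f)+uf$, subtracting, and abbreviating $w:=u-\hat u$, $f=f(x,u)$, $\hat f=f(x,\hat u)$, we obtain
\begin{equation*}
\partial_t w=-\Div(u\nabla f-\hat u\nabla\hat f)+(uf-\hat u\hat f)\qquad\text{in }Q_T.
\end{equation*}

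First I would record that, $u$ and $\hat u$ being classical solutions, $w$ together with $\partial_t w$ is continuous and bounded on $\overline{Q_T}$, so $t\mapsto\int_\Omega w^+(x,t)\cd x$ is locally Lipschitz and hence differentiable for a.a.\ $t$, with
\begin{equation*}
\frac{d}{dt}\int_\Omega w^+(x,t)\cd x=\int_\Omega\theta\bigl(w(x,t)\bigr)\,\partial_t w(x,t)\cd x\qquad\text{for a.a.\ }t.
\end{equation*}
This follows from dominated convergence applied to the difference quotients of $w^+$, together with the fact that for a.a.\ $t$ one has $\partial_t w(\cdot,t)=0$ a.e.\ on the coincidence set $[\,w(\cdot,t)=0\,]$ --- a routine consequence of the implicit function theorem and the coarea formula (where $\partial_t w\ne0$, the set $[w=0]$ is locally a $C^1$ graph $t=\tau(x)$, and $\tau^{-1}(t_0)$ is Lebesgue-null for a.a.\ $t_0$). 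Inserting the subtracted equation, we get for a.a.\ $t$
\begin{equation*}
\frac{d}{dt}\int_\Omega(u-\hat u)^+\cd x=-\int_\Omega\theta(u-\hat u)\,\Div(u\nabla f-\hat u\nabla\hat f)\cd x+\int_{[u>\hat u]}(uf-\hat u\hat f)\cd x.
\end{equation*}
The first integral on the right is $\le0$ by Lemma~\ref{lem:leones} applied to the slices $u(\cdot,t),\hat u(\cdot,t)$, which satisfy the no-flux condition~\eqref{eq:p2} (should their spatial regularity fall short of the $C^\infty$ assumed in Lemma~\ref{lem:leones}, one first mollifies in $x$ and passes to the limit, using $f_n\to f$ in $C^2$). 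On $[u>\hat u]$ we have $0<\kappa\le u,\hat u\le1/\kappa$, so~\eqref{eq:l1c-1} gives $uf-\hat u\hat f\le|uf(x,u)-\hat uf(x,\hat u)|\le L_\kappa(u-\hat u)$, whence the second integral is at most $L_\kappa\int_\Omega(u-\hat u)^+\cd x$. Combining the two bounds yields~\eqref{eq:l1c-2}.

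\textbf{Main obstacle.} The genuinely substantial inequality --- that the diffusive flux does not increase $\int_\Omega(u-\hat u)^+$ --- has already been isolated into Lemma~\ref{lem:leones}, so what remains here is essentially bookkeeping; the only delicate point is the rigorous justification of the chain-rule identity on the coincidence set (and the regularity matching with Lemma~\ref{lem:leones}). A robust alternative that sidesteps both issues is to replace $\theta$ by a smooth, convex, nondecreasing approximation $\beta_\delta$ of $s\mapsto s^+$ that vanishes on $(-\infty,0]$ and has $\supp\beta_\delta''\subset[0,\delta]$, differentiate the now genuinely classical quantity $\int_\Omega\beta_\delta(u-\hat u)\cd x$, integrate the diffusive term by parts (the boundary term vanishing by~\eqref{eq:p2}), and use the identity
\begin{equation*}
u\nabla f-\hat u\nabla\hat f=-\hat u\,c\,\nabla w+w(\nabla f-\hat u\nabla c),\qquad c(x)=-\int_0^1 f_u\bigl(x,\hat u(x)+s(u(x)-\hat u(x))\bigr)\cd s>0,
\end{equation*}
to see that $-\int_\Omega\beta_\delta''(w)\,\nabla w\cdot(u\nabla f-\hat u\nabla\hat f)\cd x\ge-O(\delta)$, the $O(\delta)$ error being controlled through the coarea formula since $\int\beta_\delta''=1$; letting $\delta\to0$ recovers the same estimate. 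Either way the proof closes.
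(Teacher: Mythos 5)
Your proof runs along exactly the same lines as the paper's: differentiate $t\mapsto\int_\Omega(u-\hat u)^+\cd x$, substitute the equation, split into the divergence term (handled by Lemma~\ref{lem:leones}) and the reaction term (handled by the Lipschitz bound~\eqref{eq:l1c-1}), and combine. The extra justification you supply for the chain-rule identity $\frac{d}{dt}\int w^+ = \int\theta(w)\partial_t w$ --- which the paper asserts without comment --- and the remark that the $C^\infty$ hypothesis in Lemma~\ref{lem:leones} does not literally match the regularity of a classical solution are both fair observations that the paper glosses over; your proposed fixes are reasonable. The alternative argument via a smooth convex approximation $\beta_\delta$ of $s\mapsto s^+$ and the Hadamard-type identity for $u\nabla f-\hat u\nabla\hat f$ is a genuinely different, self-contained route that avoids Lemma~\ref{lem:leones} entirely, though as written the $O(\delta)$ bound on the cross term would still need a line or two of care near critical points of $w$; as a backup it is sound in spirit, but the main body of your proof is the paper's proof.
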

\begin{proof}
We have:
\begin{align*}
\partial_t \int_\Omega (u - \hat u)^+ \cd x
& = \int_\Omega \theta(u - \hat u) (\partial_t u - \partial_t \hat u) \cd x
\\
& = - \int_\Omega \theta(u - \hat u) \Div(u \nabla f - \hat u \nabla \hat f) 
\cd x
\\
& + \int_\Omega \theta(u - \hat u)(uf - \hat u \hat f) \cd x =: - I_1 + I_2
,
\end{align*}
where $f$ and $\hat f$ stand for $f(x, u(x, t))$ and $f(x, \hat u(x, t))$, 
respectively.  By Lemma~\ref{lem:leones}, we have $I_1 \ge 0$.  To 
estimate~$I_2$, we use~\eqref{eq:l1c-1} and the observation that the integrand 
vanishes where $u - \hat u < 0$, thus obtaining
\begin{equation*}
I_2 \le L_\kappa \int_\Omega (u - \hat u)^+ \cd x
.
\end{equation*}
Inequality~\eqref{eq:l1c-2} follows.
\end{proof}

For $c \in \mathbb R$, define $u_c \in C^2(\overline \Omega)$ by
\begin{equation}
\label{eq:uc0}
f(x, u_c(x)) = c
.
\end{equation}
As $f$ is monotonous in~$u$, we see that the function~$u_c$ is unique, but it 
does not need to exist for a given~$c$.  Note that $u_0 = m$.

\begin{remark}
\label{rem:norm-uc}
There is a simple formula for the $L^\infty$-norm of~$u_c$:
\begin{equation}
\label{eq:uc}
\| u_c \|_{L^\infty(\Omega)}
=
\inf
\left\{
\xi \ge 0 \colon \sup_{x \in \Omega} f(x, \xi)
\le c
\right\}
.
\end{equation}
It follows from the fact that due to the monotonicity of~$f$, the inequality 
$\xi \ge \|u_c\|_{L^\infty(\Omega)}$ or, equivalently, $\xi \ge u_c(x)$ for 
all $x \in \Omega$, holds if and only if $f(x, \xi) \le f(x, u_c(x)) \equiv c$ 
for all $x \in \Omega$, i.~e., when
\begin{equation*}
\label{eq:bnd-1}
\sup_{x \in \Omega} f(x, \xi) \le c
.
\end{equation*}
\end{remark}
\begin{remark}
\label{rem:uc}
\emph{If~\eqref{eq:large-u} holds, for any $u \in L^\infty_+(\Omega)$ the 
function~$u_c$ with
\begin{equation}
\label{eq:uc-1}
c = - \esssup_{x \in \overline \Omega} f^-(x, u(x))
\end{equation}
is well-defined and $u \le u_c$ a.~e.\ in~$\Omega$.}  Indeed, if the second 
alternative in~\eqref{eq:large-u} holds, for any $x \in \overline \Omega$, the 
function $f(x, \xi)$ assumes all the values in the interval $(-\infty, 0]$ 
as~$\xi$ varies over $[m(x), \infty)$; in particular, $f(x, \xi)$ attains the 
value~$c$.  If, on the other hand, the first alternative in~\eqref{eq:large-u} 
holds, take $\xi_1 \ge \| u\|_{L^\infty}$ such that $c_1 := f(x, \xi_1)$ is 
independent of~$x$ and negative.  Clearly, for any fixed $x \in \overline 
\Omega$, the function $f(x, \xi)$ takes all the values in the interval $[c_1, 
0]$ as $\xi$ varies over $[m(x), \xi_1]$.  Now it suffices to observe that due 
to the monotonicity of~$f$, we have $c \in [c_1, 0]$.  One can prove in the 
same way that \emph{if~\eqref{eq:small-u} holds, for any function~$u$ 
essentially bounded away from~$0$ on~$\Omega$, there exists $u_c$ such that $u 
\ge u_c$ a.~e.\ in~$\Omega$, and $c\ge0$.}
\end{remark}
\begin{remark}
\label{rem:bnd}
It follows from Remarks~\ref{rem:uc} and~\ref{rem:norm-uc} that 
if~\eqref{eq:large-u} holds, the right-hand side of~\eqref{eq:linfty} is 
finite for any $u \in L^\infty_+(\Omega)$.
\end{remark}

\begin{lemma}[Restricted $L^1$-contraction]
\label{lem:l1cr}
Let $u$ be a classical solution of \eqref{eq:p1}--\eqref{eq:p3} on~$[0, 
\infty)$.  Then for $c \le 0$ we have
\begin{equation}
\label{eq:lc1}
\int_\Omega (u - u_c)^+ \cd x
\le
\int_\Omega (u^0 - u_c)^+ \cd x
,
\quad t > 0
\end{equation}
and likewise, for $c \ge 0$ we have
\begin{equation}
\label{eq:lc2}
\int_\Omega (u - u_c)^- \cd x
\le
\int_\Omega (u^0 - u_c)^- \cd x
,
\quad t > 0
\end{equation}
provided that $u_c$ exists.
\end{lemma}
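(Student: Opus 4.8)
\emph{Plan.} The idea is to run the argument of Lemma~\ref{lem:l1c}, comparing $u$ with the stationary profile $u_c$ defined by \eqref{eq:uc}. The point is that $u_c$ is a spatially flat level of $f$, so $\nabla_x f(x,u_c(x))=\nabla_x c=0$; writing $\hat f:=f(x,u_c(x))\equiv c$ we therefore have the trivial rewriting
\begin{equation*}
\partial_t u=-\Div(u\nabla f)+fu=-\Div\big(u\nabla f-u_c\nabla\hat f\big)+fu,
\end{equation*}
while $\partial_t u_c\equiv0$. Differentiating under the integral sign as in Lemma~\ref{lem:l1c} (the map $s\mapsto s^+$ is Lipschitz and $\theta(u-u_c)$ vanishes a.e.\ on $[u=u_c]$),
\begin{equation*}
\partial_t\int_\Omega(u-u_c)^+\cd x
=-\int_\Omega\theta(u-u_c)\,\Div\big(u\nabla f-u_c\nabla\hat f\big)\cd x
+\int_\Omega\theta(u-u_c)\,fu\cd x .
\end{equation*}

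For the divergence term (case $c\le0$) I would invoke Lemma~\ref{lem:leones} with $\hat u=u_c$: here $u_c\in C^2(\overline\Omega)$ is strictly positive because $c\le0=f(x,m(x))$ and $f$ is decreasing in its second argument, so $u_c\ge m>0$; and $u_c$ satisfies the no-flux condition automatically since $\nabla\hat f\equiv0$. Hence $\int_\Omega\theta(u-u_c)\Div(u\nabla f-u_c\nabla\hat f)\cd x\ge0$, so the first term above is $\le0$. (Minor technical point: Lemma~\ref{lem:leones} is stated for $C^\infty$ data; one either appeals to parabolic smoothing for $u(\cdot,t)$ and mollifies $u_c$, or simply observes that since $u_c\nabla\hat f\equiv0$ the proof of Lemma~\ref{lem:leones} goes through for the $C^2$ function $u_c$.) For the reaction term, on $[u>u_c]$ monotonicity of $f$ gives $f(x,u(x))<f(x,u_c(x))=c\le0$, and since $u>u_c\ge m>0$ there we get $fu<0$ pointwise, so $\int_\Omega\theta(u-u_c)fu\cd x\le0$. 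Therefore $\partial_t\int_\Omega(u-u_c)^+\cd x\le0$, i.e.\ $t\mapsto\int_\Omega(u-u_c)^+\cd x$ is nonincreasing, which together with $u(\cdot,0)=u^0$ gives \eqref{eq:lc1}.

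The case $c\ge0$ is symmetric: now $(u-u_c)^-=(u_c-u)^+$ and
\begin{equation*}
\partial_t\int_\Omega(u_c-u)^+\cd x
=\int_\Omega\theta(u_c-u)\,\Div\big(u\nabla f-u_c\nabla\hat f\big)\cd x
-\int_\Omega\theta(u_c-u)\,fu\cd x .
\end{equation*}
Applying Lemma~\ref{lem:leones} with the roles of $u$ and $u_c$ interchanged makes the divergence term $\le0$; and on $[u_c>u]$ monotonicity of $f$ yields $f(x,u(x))>f(x,u_c(x))=c\ge0$, so $fu\ge0$ there (where $u=0$ this is understood through the continuous extension of $uf$, which is nonnegative since $f>0$ near $u=0$ by \eqref{eq:f4}), whence $-\int_\Omega\theta(u_c-u)fu\cd x\le0$. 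Thus $\int_\Omega(u_c-u)^+\cd x$ is nonincreasing and \eqref{eq:lc2} follows.

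I expect the only real friction to be bookkeeping of signs and the minor regularity mismatch in applying Lemma~\ref{lem:leones} to $u_c$; the essential mechanism is that $u_c$ is a super- (resp.\ sub-) solution of \eqref{eq:p1} for $c\le0$ (resp.\ $c\ge0$) and that the reaction term $fu$ automatically carries the correct sign on the relevant super- or sublevel set of $u-u_c$.
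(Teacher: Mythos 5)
Your proof is correct and follows essentially the same route as the paper's: differentiate under the integral sign using the Heaviside function, observe that $\nabla_x f(x,u_c(x))\equiv 0$ so that Lemma~\ref{lem:leones} applies to kill the divergence term, and use monotonicity of $f$ in $u$ to control the sign of the reaction term on the relevant super-/sublevel set. You are a bit more explicit than the paper on two minor points — the $C^2$ vs.\ $C^\infty$ regularity mismatch when invoking Lemma~\ref{lem:leones} with $\hat u=u_c$, and the treatment of the $uf$ term at $u=0$ in the $c\ge 0$ case via the continuous extension from~\eqref{eq:f1b} — but these are technical footnotes to the same argument.
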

\begin{proof}
Let us prove~\eqref{eq:lc1} for $c \le 0$.  Computing the derivative of the 
left-hand side, for a.~a.~$t > 0$ we get
\begin{align*}
\partial_t
\int_\Omega(u - u_c)^+ \cd x
& = \int_\Omega \theta(u - u_c) \partial_t u \cd x
\\
& = - \int_\Omega \theta(u - u_c) \Div(u\nabla f)  \cd x
\\
& + \int_\Omega \theta(u - u_c) uf \cd x =: -I_1 + I_2
.
\end{align*}
As $\nabla f(x, u_c(x)) \equiv 0$, we can use Lemma~\ref{lem:leones} to get 
$I_1 \ge 0$.  Now, the integrand of~$I_2$ can only be non-zero where $u > 
u_c$, in which case $f \le c \le 0$ due to the monotonicity of~$f$; 
consequently, $I_2 \le 0$.  Thus, we have
\begin{equation*}
\partial_t
\int_\Omega(u - u_c)^+ \cd x
\le 0
\end{equation*}
and~\eqref{eq:lc1} follows.  Inequality~\eqref{eq:lc2} is proved in much the 
same way.
\end{proof}

\begin{lemma}
\label{lem:ex-smooth}
Suppose that~$f$ satisfies~\eqref{eq:large-u} and~\eqref{eq:small-u}.  Then for 
any smooth $u^0 \colon \overline \Omega \to (0, \infty)$ satisfying the 
non-flux boundary condition, problem \eqref{eq:p1}--\eqref{eq:p3} has a 
classical solution.
\end{lemma}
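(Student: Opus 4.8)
The plan is to rewrite \eqref{eq:p1} in the divergence form \eqref{eq:p1a}, namely $\partial_t u=\Delta\Phi-\Div(\Phi_x+uf_x)+uf$, view it as a quasilinear parabolic equation whose principal coefficient $\Phi_u=-uf_u$ is strictly positive on $(0,\infty)$ by \eqref{eq:f2}, and first produce a priori $L^\infty$-bounds from stationary barriers. The relevant barriers are the functions $u_c\in C^2(\overline\Omega)$ defined by $f(x,u_c(x))=c$: differentiating this relation gives $\nabla_x\big(f(x,u_c(x))\big)\equiv0$, so $-\Div\big(u_c\nabla f(\cdot,u_c)\big)+f(\cdot,u_c)u_c=c\,u_c$, whence $u_c$ is a classical stationary supersolution of \eqref{eq:p1} when $c\le0$ and a subsolution when $c\ge0$, and it satisfies the no-flux boundary condition \eqref{eq:p2} trivially (the barrier inequalities are the same computation as in the proofs of Lemma~\ref{lem:leones} and Lemma~\ref{lem:l1cr}). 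Since $u^0$ is smooth, strictly positive and thus bounded away from $0$ on $\overline\Omega$, assumptions \eqref{eq:large-u} and \eqref{eq:small-u} together with Remark~\ref{rem:uc} provide $c_-\le0\le c_+$ for which $u_{c_\pm}$ exist and $u_{c_+}\le u^0\le u_{c_-}$ on $\overline\Omega$; moreover $u_{c_+}\le u_{c_-}$ because $f$ decreases in $u$, and all three functions take their values in some fixed interval $[\kappa,1/\kappa]\subset(0,\infty)$.

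Next I would regularise. Fix such a $\kappa$ and replace the coefficients $\Phi(x,u)$, $\Phi_x(x,u)$, $(uf_x)(x,u)$ and the reaction term $(uf)(x,u)$ by smooth functions of $(x,u)\in\overline\Omega\times\R$ that coincide with them for $u\in[\kappa,1/\kappa]$ and are chosen so that the modified diffusion coefficient is bounded above and below by positive constants and all modified coefficients are globally bounded with bounded derivatives (extend affinely, or by constants, in $u$ outside $[\kappa,1/\kappa]$). The modified problem is then uniformly parabolic with smooth bounded coefficients, the boundary condition becomes a regular oblique-derivative condition, and $u^0$ satisfies the required zeroth-order compatibility condition by hypothesis (recall $\partial\Omega$ is smooth). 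Standard quasilinear parabolic theory now yields a unique classical solution of the modified problem on a maximal time interval; for instance one solves, in a fixed large ball of a parabolic Hölder space, the linear problems obtained by freezing the coefficients and applies Leray--Schauder, the required a priori estimate coming from Schauder together with De~Giorgi--Nash bounds once an $L^\infty$-bound is available.

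Finally I would remove the regularisation by comparison. Since $u_{c_-}$ takes values in $[\kappa,1/\kappa]$, where the modified equation coincides with \eqref{eq:p1a}, it is a classical supersolution of the modified problem on the whole domain, and likewise $u_{c_+}$ is a classical subsolution; the comparison principle for the uniformly parabolic, smooth-coefficient modified problem, applied with $u_{c_+}\le u^0\le u_{c_-}$ at $t=0$, then gives $u_{c_+}\le u\le u_{c_-}$ on the whole existence interval. This bound is time-independent, so the solution is in fact global, and since it takes its values in $[\kappa,1/\kappa]$ it solves the original problem \eqref{eq:p1}--\eqref{eq:p3} on $[0,\infty)$. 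I expect the main obstacle to be the bookkeeping of the second step: one must arrange a truncation that is simultaneously uniformly parabolic, globally regular, and \emph{left untouched on the range dictated by the barriers}, so that $u_{c_\pm}$ remain sub- and supersolutions of the regularised problem and the comparison step is not circular; the barrier computations and the invocation of the standard existence and comparison theory for quasilinear parabolic equations are then routine.
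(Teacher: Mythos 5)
Your proposal is correct and follows essentially the same route as the paper: a priori $L^\infty$-bounds from the stationary barriers $u_{c_\pm}$ (the paper derives them via the restricted $L^1$-contraction of Lemma~\ref{lem:l1cr}, you via the comparison principle directly, which amounts to the same thing), followed by an appeal to classical quasilinear parabolic existence theory once the solution is pinned in a compact subinterval of $(0,\infty)$ where the diffusion coefficient is non-degenerate. You spell out the truncation of the coefficients and the Leray--Schauder/Schauder machinery that the paper leaves implicit behind the phrase ``classical theory of quasilinear parabolic equations.''
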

\begin{proof}
Equation~\eqref{eq:p1} can be cast in the form
\begin{equation*}
\partial_t u = - uf_u \Delta u - \nabla u \cdot (f_x + f_u \nabla u)
- u (f_{xx} + 2f_{xu} \cdot \nabla u + f_{uu} | \nabla u |^2 - f)
.
\end{equation*}
If we show that a classical solution is \emph{a priori} bounded and stays away 
from~0, we can ignore the fact that the coefficient $- u f_u$ can be 
degenerate or singular at $u = 0, \infty$ and infer the existence of the 
solution from the classical theory of quasilinear parabolic equations.

Indeed, according to Remark~\ref{rem:uc}, we can find $u_{c_1}$ and $u_{c_2}$ 
such that $c_2 \le 0 \le c_1$ and
\begin{equation*}
u_{c_1}(x) \le u^0(x) \le u_{c_2}(x) \quad (x \in \Omega)
.
\end{equation*}
Then it follows from Lemma~\ref{lem:l1cr} that
\begin{equation*}
u_{c_1}(x) \le u(x, t) \le u_{c_2}(x, t) \quad (x, t) \in \Omega \times (0, 
\infty)
,
\end{equation*}
providing the required bounds.
\end{proof}

\subsection{Positive initial data}

If the initial data~\eqref{eq:p3} is bounded away from~$0$, we approximate it 
with smooth functions and prove the existence and uniqueness of weak solutions 
to~\eqref{eq:p1}--\eqref{eq:p3} stated in Theorem~\ref{lem:sp} below.

\begin{lemma}
\label{lem:energy}
Suppose that $u \in L_+^\infty(Q_T)$ satisfies the energy 
inequality~\eqref{eq:energy} in the sense of measures; then
\begin{gather}
\| \energy(u) \|_{L^\infty(0,T)} \le \esslimsup_{t \to +0} \energy(u(t)) + CT,
\label{eq:nrg1}
\\
\| \nabla \Phi(\cdot, u(\cdot)) \|_{L^2(Q_T)} \le 2(\esslimsup_{t \to +0} 
\energy(u(t)) + CT),
\label{eq:nrg2}
\end{gather}
where $C > 0$ is determined by an upper bound on $\|u\|_{L^{\infty}(\Omega)}$.
\end{lemma}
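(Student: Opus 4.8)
The plan is to absorb the two lower-order terms on the right-hand side of the energy inequality~\eqref{eq:energy} into the dissipative term $-|\nabla\Phi|^2$, at the cost of an additive constant depending on $u$ only through $M:=\|u\|_{L^\infty(Q_T)}$. This reduces the lemma to reading off both estimates from the resulting inequality $\partial_t\energy(u)+\tfrac12\int_\Omega|\nabla\Phi|^2\cd x\le C$ (understood in the sense of measures) by testing against well-chosen cutoffs in time.

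First I would record pointwise bounds on the lower-order quantities. Since $u\in L^\infty_+(Q_T)$, the superposition operator associated with $\Phi$ is $L^\infty$-bounded (Section~\ref{ss:ee}); moreover~\eqref{eq:f5} gives $\xi|f_{xu}(x,\xi)|\le g(\xi)$ with $g\in L^1_{\mathrm{loc}}[0,\infty)$, whence $|\Phi_x(x,u)|\le\int_0^u g\le\int_0^M g<\infty$ for $0\le u\le M$, and by~\eqref{eq:f1b} the functions $uf$ and $uf_x$ are continuous, hence bounded, on the compact set $\overline\Omega\times[0,M]$. Thus there is $c_0=c_0(M)$ such that $|\Phi_x+uf_x|\le c_0$ and $|uf\,\Phi|\le c_0$ a.e.\ in $Q_T$, and Cauchy--Schwarz together with Young's inequality yield, pointwise,
\begin{equation*}
-|\nabla\Phi|^2+(\Phi_x+uf_x)\cdot\nabla\Phi+uf\,\Phi\le-\tfrac12|\nabla\Phi|^2+\tfrac12c_0^2+c_0 .
\end{equation*}
Integrating in $x$ and substituting into~\eqref{eq:energy}, I would obtain a constant $C=C(M)$ so that for every nonnegative $\chi\in C_c^\infty(0,T)$
\begin{equation}
\label{eq:nrg-star}
\tfrac12\iint_{Q_T}\chi(t)\,|\nabla\Phi|^2\cd x\cd t-\int_0^T\chi'(t)\,\energy(u(t))\cd t\le C\int_0^T\chi(t)\cd t .
\end{equation}

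To obtain~\eqref{eq:nrg1} I would drop the nonnegative first term in~\eqref{eq:nrg-star}; the resulting inequality says exactly that $\partial_t\energy(u)\le C$ in $\mathcal D'(0,T)$, so $t\mapsto\energy(u(t))-Ct$ coincides a.e.\ with a non-increasing function $h$ on $(0,T)$ (as observed after~\eqref{eq:entropy}). Since $\energy(u)\ge0$, the left limit $h(0^+)$ exists; because $h$ is non-increasing and equals $\energy(u(t))-Ct$ for a.e.\ $t$, we have $h(0^+)=\esslimsup_{\tau\to+0}h(\tau)=\esslimsup_{\tau\to+0}\energy(u(\tau))$. Hence $\energy(u(t))=h(t)+Ct\le h(0^+)+Ct$ for a.e.\ $t\in(0,T)$, which is~\eqref{eq:nrg1}.

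Finally, for~\eqref{eq:nrg2} I would test~\eqref{eq:nrg-star} against a nonnegative $\chi_\delta\in C_c^\infty(0,T)$ equal to $1$ on $[\delta,T-\delta]$, supported in $(\delta/2,T-\delta/2)$, nondecreasing on $(0,\delta)$ and nonincreasing on $(T-\delta,T)$. On $(0,\delta)$ one has $\chi_\delta'\ge0$ and $\int_0^\delta\chi_\delta'\cd t=1$, so $\int_0^\delta\chi_\delta'\,\energy(u)\cd t\le\esssup_{(0,\delta)}\energy(u)$; on $(T-\delta,T)$ one has $\chi_\delta'\le0$, hence $\int_{T-\delta}^T\chi_\delta'\,\energy(u)\cd t\le0$ thanks to $\energy(u)\ge0$; and $\int_0^T\chi_\delta\cd t\le T$. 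Therefore
\begin{equation*}
\tfrac12\iint_{Q_T}\chi_\delta\,|\nabla\Phi|^2\cd x\cd t\le CT+\esssup_{(0,\delta)}\energy(u) .
\end{equation*}
Letting $\delta\to+0$ and using Fatou's lemma on the left together with the definition of the essential limit superior on the right, I would conclude $\|\nabla\Phi(\cdot,u(\cdot))\|_{L^2(Q_T)}^2\le2\big(\esslimsup_{\tau\to+0}\energy(u(\tau))+CT\big)$, which yields~\eqref{eq:nrg2}. The routine part is the first step; the step requiring some care is the passage from the measure-sense inequality~\eqref{eq:nrg-star} to the pointwise-in-time statements — the identification of $h(0^+)$ with $\esslimsup_{\tau\to+0}\energy(u(\tau))$ and the limiting procedure with the cutoffs $\chi_\delta$.
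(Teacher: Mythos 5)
Your proof is correct and rests on the same two ingredients as the paper's: Young's inequality to absorb the lower-order terms into $-\tfrac12|\nabla\Phi|^2$, and the elementary fact that a non-positive distributional derivative forces a.e.\ monotonicity. The only difference is procedural. The paper first turns the measure-sense inequality into a two-point inequality
$\energy(u(t_1))+\tfrac12\int_{t_0}^{t_1}\int_\Omega|\nabla\Phi|^2\cd x\cd t\le\energy(u(t_0))+C(t_1-t_0)$
valid for a.e.\ $t_0<t_1$, and then obtains both~\eqref{eq:nrg1} and~\eqref{eq:nrg2} by passing $t_0\to0$. You instead apply Young first at the level of test functions, extract~\eqref{eq:nrg1} from the monotonicity argument after discarding the dissipation, and handle~\eqref{eq:nrg2} separately by testing against cutoffs $\chi_\delta$ and applying Fatou; both routes work and land in the same place. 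Two small remarks. Your justification that ``since $\energy(u)\ge0$, the left limit $h(0^+)$ exists'' is not the right reason: $h(0^+)\in(-\infty,+\infty]$ exists simply because $h$ is non-increasing, and the nonnegativity of $\energy$ plays no role in that claim (if $h(0^+)=+\infty$, inequality~\eqref{eq:nrg1} is vacuous). Finally, as you correctly note at the end, the argument yields a bound on $\|\nabla\Phi\|_{L^2(Q_T)}^2$ rather than on the norm itself; the missing square in~\eqref{eq:nrg2} is evidently a typo in the paper, since the paper's own proof also produces the squared norm.
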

\begin{proof}
The function
\begin{equation*}
t \mapsto
\energy(u(t)) - \int_0^t
\left(
- \int_{\Omega} | \nabla \Phi |^2 \cd x
+ \int_{\Omega} (\Phi_x + uf_x) \cdot \nabla \Phi \cd x
+ \int_{\Omega} uf\Phi \cd x \right) \rd t
\end{equation*}
has a non-positive derivative in the sense of measures, so it a.~e.\ coincides 
with a non-increasing function.  In other words, for a.~a.\ $t_0, t_1 \in (0, 
T)$, $t_0 < t_1$, we have
\begin{equation*}
\energy(u(t_1)) - \energy(u(t_0))
- \int_{t_0}^{t_1}
\left(
- \int_{\Omega} | \nabla \Phi |^2 \cd x
+ \int_{\Omega} (\Phi_x + uf_x) \cdot \nabla \Phi \cd x
+ \int_{\Omega} uf\Phi \cd x \right) \rd t
\le 0
.
\end{equation*}

An upper bound on $\|u\|_{L^\infty(Q_T)}$ defines essential upper bounds on 
$uf$, $\Phi = \Phi(x, u(x, t))$, $\Phi_x$, and $uf_x$, so for a.~a.\ $t \in 
(t_0, t_1)$ we can estimate
\begin{multline*}
\int_{\Omega} (\Phi_x + uf_x) \cdot \nabla \Phi \cd x
+ \int_{\Omega} uf\Phi \cd x
\\
\le
\frac 12 \int_{\Omega} | \nabla \Phi |^2 \cd x
+ \frac 12 \int_{\Omega} | \Phi_x + uf_x |^2 \cd x
+ \int_{\Omega} uf\Phi \cd x
\\
\le
\frac 12 \int_{\Omega} | \nabla \Phi |^2 \cd x
+C
,
\end{multline*}
whence
\begin{equation*}
\energy(u(t_1))
+ \frac 12 \int_{t_0}^{t_1} \int_{\Omega} | \nabla \Phi |^2 \cd x \cd t
\le
\energy(u(t_0)) + C(t_1 - t_0)
.
\end{equation*}
Passing to the essential upper limit as $t_0 \to 0$ and estimating $t_1 - t_0 
\le T$, we obtain
\begin{equation*}
\energy(u(t_1))
+ \frac 12 \int_{0}^{t_1} \int_{\Omega} | \nabla \Phi |^2 \cd x \cd t
\le
\esslimsup_{t \to +0} \energy(u(t)) + CT
,
\end{equation*}
whence \eqref{eq:nrg1} and \eqref{eq:nrg2} follow.
\end{proof}

\begin{theorem}[Solvability for positive data]
\label{lem:sp}
Suppose that~$f$ satisfies~\eqref{eq:f1a}--\eqref{eq:f1d} as well 
as~\eqref{eq:large-u} and~\eqref{eq:small-u}.  Then for any $u^0 \in 
L_+^\infty$ such that
\begin{equation*}
\kappa \le u^0 \le \frac 1 \kappa \quad \text{a.~e.\ in } \Omega
\end{equation*}
with some constant $\kappa > 0$, there exists a unique weak solution $$u \in 
L_+^\infty(\Omega \times [0, \infty)) \cap C([0, 
\infty); L^1(\Omega))$$ satisfying the following properties: i) the upper 
bound~\eqref{eq:linfty} and lower bound~\eqref{eq:lower}; ii) the energy and 
entropy dissipation inequalities as well as~\eqref{eq:init-energy} and 
\eqref{eq:init-entropy}; iii) the restricted contraction
\begin{gather}
\int_\Omega (u - u_c)^+ \cd x
\le \int_\Omega (u^0 - u_c)^+ \cd x \quad (c \le 0)
\label{eq:sp-d}
,
\\
\int_\Omega (u - u_c)^- \cd x
\le \int_\Omega (u^0 - u_c)^- \cd x \quad (c \ge 0)
\label{eq:sp-e}
\end{gather}
whenever $u_c$ is defined; iv) if $\hat u$ is another such solution with the 
initial data $\hat u^0$, the $L^1$-contraction holds:
\begin{equation}
\label{eq:sp-c}
\| (u(t) - \hat u(t))^+ \|_{L_1(\Omega)}
\le
\mathrm{e}^{L_\kappa t}
\| (u^0 - \hat u^0)^+ \|_{L_1(\Omega)}
,
\end{equation}
where $L_\kappa$ is defined by~\eqref{eq:l1c-1}.
\end{theorem}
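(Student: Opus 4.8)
The plan is to obtain the solution as an $L^1$-limit of classical solutions with regularized data. The only delicate point in the regularization is that Lemma~\ref{lem:ex-smooth} requires the no-flux condition~\eqref{eq:p2}, so I would not smooth $u^0$ directly. Instead, since $\kappa \le u^0 \le 1/\kappa$, the \emph{potential} $F^0 := f(\cdot, u^0(\cdot))$ lies in $L^\infty(\Omega)$; I let $F^0_n := \mathrm e^{\tau_n \Delta_N} F^0$ with $\tau_n \downarrow 0$ (the Neumann heat semigroup), so that $F^0_n$ is smooth, $\partial_\nu F^0_n = 0$ on $\partial\Omega$, $\|F^0_n\|_{L^\infty} \le \|F^0\|_{L^\infty}$, and $F^0_n \to F^0$ in $L^1(\Omega)$ and a.e., and set $u^0_n(x)$ to be the unique solution of $f(x, u^0_n(x)) = F^0_n(x)$. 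Assumptions~\eqref{eq:large-u} and~\eqref{eq:small-u} ensure that the (bounded) values of $F^0$ lie inside $\bigcap_{x \in \overline\Omega} \operatorname{range} f(x, \cdot)$ with room to spare, so by the implicit function theorem ($f_u < 0$) each $u^0_n$ is smooth, satisfies the no-flux condition since $F^0_n$ does, and $\kappa_0 \le u^0_n \le 1/\kappa_0$ with $\kappa_0$ depending only on $\kappa$, $f$ and $\Omega$. Moreover $u^0_n \to u^0$ a.e.\ and in $L^1(\Omega)$, whence (dominated convergence) $\energy(u^0_n) \to \energy(u^0)$, $\entropy(u^0_n) \to \entropy(u^0)$, $\|\min(u^0_n, m)\|_{L^1} \to \|\min(u^0, m)\|_{L^1}$, and $\esssup_x f^-(x, u^0_n(x)) \to \esssup_x f^-(x, u^0(x))$. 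I then apply Lemma~\ref{lem:ex-smooth} to get classical solutions $u_n$ of~\eqref{eq:p1}--\eqref{eq:p3} on $[0,\infty)$ with data $u^0_n$.

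Next I would collect $n$-independent estimates. By Lemma~\ref{lem:l1cr} together with Remark~\ref{rem:uc} (which uses~\eqref{eq:large-u} and~\eqref{eq:small-u}) there are $c_1(n) \le 0 \le c_2(n)$, bounded in $n$, with $u_{c_2(n)} \le u_n(\cdot,t) \le u_{c_1(n)}$ on $\Omega\times(0,\infty)$, so $0 < \kappa_1 \le u_n \le R$ uniformly in $n$; taking $c_1(n) = -\esssup_x f^-(x, u^0_n(x))$ as in~\eqref{eq:uc-1} prepares the passage to~\eqref{eq:linfty}. Each $u_n$ satisfies the energy identity~\eqref{eq:energy-id} and, being positive, the entropy identity~\eqref{eq:entropy-id}; the $L^\infty$-bound controls $uf$, $\Phi$, $\Phi_x$, $uf_x$, so, as in the proof of Lemma~\ref{lem:energy}, $\energy(u_n(t)) \le \energy(u^0_n) + Ct$ and $\|\nabla\Phi(\cdot,u_n(\cdot))\|_{L^2(Q_T)} \le C(T)$ uniformly in $n$ (with $C$ depending only on $R$). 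Since $\Phi_u = -uf_u$ is bounded above and below by positive constants on $\overline\Omega\times[\kappa_1,R]$, this also bounds $\{u_n\}$ in $L^2(0,T;H^1(\Omega))$. Finally --- and this is what produces a genuine limit --- because $u_n$ and $u_m$ lie in the common range $[\kappa_1,R]$, Lemma~\ref{lem:l1c} applies to the pair $(u_n,u_m)$ and gives $\|(u_n(t)-u_m(t))^\pm\|_{L^1(\Omega)} \le \mathrm e^{Lt}\|(u^0_n-u^0_m)^\pm\|_{L^1(\Omega)}$; as $\{u^0_n\}$ is Cauchy in $L^1(\Omega)$, $\{u_n\}$ is Cauchy in $C([0,T];L^1(\Omega))$ for every $T$. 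Let $u$ be the limit; then $u \in C([0,\infty);L^1(\Omega))$, $\kappa_1 \le u \le R$, and $u_n \to u$ a.e.\ in $Q_T$, hence in $L^p(Q_T)$ for all $p<\infty$.

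Then I would pass to the limit. Each $u_n$ is a weak solution (multiply~\eqref{eq:p1a} by a test function and integrate by parts using~\eqref{eq:p2} and $\varphi(\cdot,T)=0$). In~\eqref{eq:def1} the lower-order nonlinearities $\Phi_x(\cdot,u_n)+u_nf_x(\cdot,u_n)$ and $u_nf(\cdot,u_n)$ converge strongly in $L^2(Q_T)$ (continuous, uniformly bounded, a.e.\ convergent), while $\nabla\Phi(\cdot,u_n)\rightharpoonup\nabla\Phi(\cdot,u)$ weakly in $L^2(Q_T)$ (the weak limit is identified by the a.e.\ convergence of $\Phi(\cdot,u_n)$); hence $u$ solves~\eqref{eq:def1}. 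Passing to the limit in the measure-form identities for $u_n$ then yields the energy inequality~\eqref{eq:energy} and the entropy dissipation inequality~\eqref{eq:entropy}: the \emph{good} terms converge (strong $\times$ weak, respectively strong), while $\int_\Omega|\nabla\Phi|^2$ and $\entropyproduction(u_n)$ are weakly lower semicontinuous --- for the weighted integral in $\entropyproduction$ one combines the strong (and bounded) convergence $1/u_n \to 1/u$ with the weak convergence of $-\nabla\Phi(\cdot,u_n)+\Phi_x(\cdot,u_n)+u_nf_x(\cdot,u_n)$. Likewise, passing to the limit in $\energy(u_n(t))\le\energy(u^0_n)+Ct$ and $\entropy(u_n(t))\le\entropy(u^0_n)$ gives~\eqref{eq:init-energy} and~\eqref{eq:init-entropy}. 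The bound~\eqref{eq:linfty} follows from $u_n \le u_{c_1(n)}$ and the convergence $c_1(n)\to-\esssup_x f^-(x,u^0(x))$ via Remark~\ref{rem:norm-uc}; the lower $L^1$-bound~\eqref{eq:lower} from $\|u_n(t)\|_{L^1}\ge\|\min(u^0_n,m)\|_{L^1}$ (a consequence of~\eqref{eq:lc2} with $c=0$, since $u_0=m$) and $L^1$-convergence; and the restricted contractions~\eqref{eq:sp-d}, \eqref{eq:sp-e} from passing to the limit in~\eqref{eq:lc1}, \eqref{eq:lc2}.

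For~\eqref{eq:sp-c} and uniqueness: if $\hat u$ is another solution built this way from data $\hat u^0$ bounded away from $0$ and $\infty$, its approximants $\hat u_n$ and the $u_n$ have ranges in a common compact interval, so Lemma~\ref{lem:l1c} gives $\|(u_n(t)-\hat u_n(t))^+\|_{L^1}\le\mathrm e^{L_\kappa t}\|(u^0_n-\hat u^0_n)^+\|_{L^1}$, and passing to the limit gives~\eqref{eq:sp-c}; taking $\hat u^0=u^0$ shows that the constructed solution is the unique one with the listed properties. I expect the main obstacle to be the construction of the regularized data --- keeping it smooth, strictly positive with $n$-independent two-sided bounds, compatible with the no-flux condition, and fine enough to transfer~\eqref{eq:linfty} to the limit --- together with the weak lower semicontinuity of the entropy production with the non-constant weight $1/u_n$; the remaining compactness and limit-passage arguments are routine.
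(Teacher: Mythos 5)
Your proof follows the paper's strategy exactly --- regularize the initial data to smooth, strictly positive, no-flux compatible data; solve classically via Lemma~\ref{lem:ex-smooth}; extract an $L^1$-Cauchy sequence via the contraction Lemma~\ref{lem:l1c}; and pass to the limit in the weak formulation, the energy and entropy inequalities, and the contractions --- and it is correct. Two steps genuinely differ from the paper's text. First, the paper merely asserts the existence of smooth $u^0_n$ with $\kappa \le u^0_n \le 1/\kappa$, satisfying the no-flux compatibility, and converging to $u^0$. You observe that the compatibility $\partial_\nu f(x, u^0_n(x)) = 0$ on $\partial\Omega$ is a nonlinear constraint on $u^0_n$ which a direct mollification of $u^0$ would not produce, and your construction --- Neumann-heat-mollify the potential $F^0 = f(\cdot,u^0)$ and invert $f$ --- delivers simultaneously the smoothness, the boundary compatibility, and (via the maximum principle, together with the fact that $[\essinf F^0,\esssup F^0]\subset[c_1,c_2]$ is contained in the range of $f(x,\cdot)$ for every $x$ by~\eqref{eq:large-u},~\eqref{eq:small-u} and Remark~\ref{rem:uc}) uniform two-sided bounds. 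This fills a step the paper leaves implicit. Second, for the weighted entropy-production term the paper truncates the weight to $\max(u_n,\delta)$, applies the varying-measure Banach--Alaoglu Lemma~\ref{lem:variant_banach_alaoglu_vector_fields}, and lets $\delta\to 0$; this is written to be reusable in Theorem~\ref{th:ex}, where the limit $u$ may vanish. You instead exploit that under~\eqref{eq:small-u} the $u_n$ are uniformly bounded away from zero (Lemma~\ref{lem:l1cr}), so $1/u_n$ is uniformly bounded and converges a.e.; then $u_n^{-1/2}(-\nabla\Phi_n + \Phi_{xn} + u_nf_{xn}) \rightharpoonup u^{-1/2}(-\nabla\Phi + \Phi_x + uf_x)$ weakly in $L^2(Q_T)$, and ordinary weak lower semicontinuity of the $L^2$-norm gives the inequality. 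This is simpler and correct in the present positive-data setting, at the price of not carrying over to Theorem~\ref{th:ex} as written. One small imprecision: you claim $c_1(n) \to -\esssup_x f^-(x,u^0(x))$, but essential suprema are not stable under a.e.\ convergence in general. What you actually get, from the maximum-principle bound $\essinf F^0 \le F^0_n \le \esssup F^0$, is the one-sided estimate $c_1(n) \ge c_1$, hence $u_{c_1(n)} \le u_{c_1}$ pointwise --- which is all that is needed to pass~\eqref{eq:linfty} to the limit via Remark~\ref{rem:norm-uc}.
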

\begin{proof}
Let $\{u_n^0\}$ be a sequence of smooth functions satisfying the no-flux 
boundary condition such that
\begin{equation}
\label{eq:sp1}
\kappa \le u^0_n(x) \le \frac 1 \kappa \quad \text{in } \Omega
\end{equation}
and
\begin{equation}
\label{eq:sp2}
u^0_n \to u^0 \quad \text{in $L^1(\Omega)$ and a.~e.\ in $\Omega$}
.
\end{equation}
Let $u_n$ be the classical solution of~\eqref{eq:p1}--\eqref{eq:p3} on $[0, 
\infty)$ with the initial data $u^0_n$.  For any $T > 0$, it follows from 
Lemma~\ref{lem:l1c} that
\begin{equation*}
\| u_n - u_m \|_{C([0,T]; L^1(\Omega))}
\le \mathrm e^{L_\kappa T} \| u^0_n - u^0_m \|_{L^1}
,
\end{equation*}
so $\{u_n\}$ is a Cauchy sequence in $C([0, T]; L^1(\Omega))$.  As $T$ is 
arbitrary, we see that $\{u_n\}$ converges in $C([0, \infty); L^1(\Omega))$ to 
some function~$u$.  We claim that it is the sought-for solution.

By Remark~\ref{rem:uc}, there exists $u_c$ ($c \le 0$) such that $u_c \ge 
1/\kappa$; then $u_c$ dominates the initial data $u^0_n$ and thus, the 
solutions~$u_n$ as well, which follows from Lemma~\ref{lem:l1cr}.  
Consequently, the sequence~$\{u_n\}$ is bounded in~$L^\infty(\Omega \times (0, 
\infty))$, so it converges to~$u$ weakly* in this space, whence $u \in 
L^\infty(\Omega \times (0, \infty))$.

Put
\begin{align*}
f_n & = f(x, u_n(x, t)), & f_{xn} & = f_x(x, u_n(x, t)), \\
\Phi_n & = \Phi(x, u_n(x, t)), & \Phi_{xn} & = \Phi_x(x, u_n(x, t)), \\
\Psi_n & = \Psi(x, u_n(x, t)), & E_n & = E(x, u_n(x, t)).
\end{align*}
Fix $T > 0$.  As the sequence $\{u_n\}$ is bounded in $L^\infty(Q_T)$, so are 
the sequences $\{u_n f_n\}$, $\{u_n f_{xn}\}$, $\{\Phi_n\}$, $\{\Phi_{xn}\}$, 
$\{\Psi_n\}$, and $\{E_n\}$.  Thus, there is no loss of generality in assuming
\begin{equation}
\label{eq:sp-conv}
\left.
\begin{aligned}
u_n & \to u \\
u_n f_n & \to uf \\
u_n f_{xn} & \to uf_x \\
\Phi_n & \to \Phi \\
\Phi_{xn} & \to \Phi_x \\
\Psi_n & \to \Psi
\\
E_n & \to E
\end{aligned}
\right\}
\begin{tabular}{l}
a.~e.\ in  $Q_T$, \\
strongly in any $L^p(Q_T)$, $1 \le p < \infty$, \\
weakly* in $L^\infty(Q_T)$, \\
and in the sense of distributions,
\end{tabular}
\end{equation}
where we write $\Phi$ for $\Phi(\cdot, u(\cdot))$, etc.  It follows 
from~\eqref{eq:sp-conv} that $\nabla \Phi_n \to \nabla \Phi$ in the sense of 
distributions.  The approximate solutions satisfy the energy inequality 
and~\eqref{eq:init-energy} while their initial energy is bounded, so we see 
from~\eqref{eq:nrg2} that the sequence $\nabla \Phi_n$ is bounded in 
$L^2(Q_T)$.  Consequently, $\Phi \in L^2(0, T; H^1(\Omega))$ and
\begin{equation}
\label{eq:sp-conv-a}
\nabla \Phi_n \to \nabla \Phi \quad \text{weakly in } L^2(Q_T)
.
\end{equation}

Let us check that $u$ is a weak solution of \eqref{eq:p1}--\eqref{eq:p3} on 
$[0, T]$.  Take an admissible test function~$\varphi$.  Writing the weak 
setting for the approximate solution, we have
\begin{equation}
\label{eq:sp3}
\int_0^T
\int_\Omega
(u_n \partial_t \varphi + ( - \nabla \Phi_n + \Phi_{xn} + u_n f_{xn}) \cdot 
\nabla \varphi + f_n u_n \varphi) \cd x \cd t
=
\int_\Omega u^0_n(x) \varphi(x, 0) \cd x
.
\end{equation}
It follows from~\eqref{eq:sp2}, \eqref{eq:sp-conv}, and \eqref{eq:sp-conv-a} 
that we can pass to the limit in~\eqref{eq:sp3} and obtain~\eqref{eq:def1} 
for~$u$.  Thus, $u$ is indeed a weak solution.

Let us show that $u$ satisfies the energy inequality on $[0, T]$ in the sense 
of measures.  Taking a smooth nonnegative test function $\varphi \in C^\infty$ 
vanishing outside of $[0, T]$, we write the energy inequality in the sense of 
measures for the approximate solutions:
\begin{multline*}
- \iint_{Q_T} \Psi_n \varphi'(t) \cd x \cd t
\le
- \iint_{Q_T} |\nabla \Phi_n|^2 \varphi(t) \cd x \cd t
\\
+ \iint_{Q_T} \varphi(t) (\Phi_{xn} + u_n f_{xn}) \cdot \nabla \Phi_n \cd x 
\cd t
+ \iint_{Q_T} u_n f_n \Phi_n \varphi(t) \cd x cd t
\end{multline*}
Convergences~\eqref{eq:sp-conv} ensure that we can pass to the limit in all 
the terms but for the first one on the right-hand side.  As for the latter, it 
follows from~\eqref{eq:sp-conv-a} that $\sqrt \varphi \, \nabla \Phi_n \to 
\sqrt \varphi \, \nabla \Phi$ weakly in $L^2(Q_T)$, whence
\begin{equation*}
\iint_{Q_T} \varphi | \nabla \Phi |^2 \cd x \cd t
\le
\liminf_{n \to \infty}
\iint_{Q_T} \varphi | \nabla \Phi_n |^2 \cd x \cd t
,
\end{equation*}
and the energy inequality follows.

Let us check~\eqref{eq:init-energy}.  The approximate solutions satisfy
\begin{equation*}
\esslimsup_{t \to +0} \energy(u_n(t)) \le \energy(u_n^0)
\end{equation*}
so by virtue of~\eqref{eq:nrg1} we obtain
\begin{equation*}
\esssup_{t \in (0, \varepsilon)} \energy(u_n(t)) \le \energy(u_n^0) + 
C\varepsilon
.
\end{equation*}
It follows from~\eqref{eq:sp2} and \eqref{eq:sp-conv} that
\begin{gather*}
\energy(u_n) \to \energy(u) \quad \text{weakly* in } L^\infty(0, \varepsilon)
,
\\
\energy(u^0_n) \to \energy(u^0)
,
\end{gather*}
so we get
\begin{align*}
\esssup_{t \in (0, \varepsilon)} \energy(u(t))
& \le
\liminf_{n \to \infty} \esssup_{t \in (0, \varepsilon)} \energy(u_n(t))
\\
& \le
\lim_{n \to \infty} \energy(u_n^0) + C\varepsilon
\\
& =\energy(u^0) + C\varepsilon
.
\end{align*}
Now sending $\varepsilon \to 0$ we recover~\eqref{eq:init-energy}.

Let us show that $u$ satisfies the entropy dissipation inequality on $[0, T]$ 
in the sense of measures.  Let $\varphi \in C^\infty$ be a smooth nonnegative 
test function vanishing outside of $[0, T]$.  The approximate solutions 
satisfy the entropy dissipation inequality in the sense of measures, so we 
have
\begin{equation*}
- \iint_{Q_T} E_n \varphi'(t) \cd x \cd t
\le
- \iint_{Q_T} \varphi(t) u_n f_n^2 \cd x \cd t
- \iint_{u_n > 0} \frac{\varphi(t)}{u_n} | - \nabla \Phi_n + \Phi_{xn} + u_n 
f_{xn}|^2 \cd x \cd t
.
\end{equation*}
Consequently, for any $\delta > 0$ we have
\begin{multline}
\label{eq:sp4}
- \iint_{Q_T} E_n \varphi'(t) \cd x \cd t
\le
- \iint_{Q_T} \frac{\varphi(t)}{\max(u_n, \delta)} (u_n f_n)^2 \cd x \cd t
\\
- \iint_{Q_T} \frac{\varphi(t)}{\max(u_n, \delta)}
| - \nabla \Phi_n + \Phi_{xn} + u_n f_{xn}|^2 \cd x \cd t
.
\end{multline}
Observe that
\begin{gather}
\label{eq:mery}
\frac{\varphi(t)}{\max(u_n, \delta)}
\to
\frac{\varphi(t)}{\max(u, \delta)}
\begin{tabular}{l}
a.~e.\ in  $Q_T$, \\
strongly in any $L^p$, $1 \le p < \infty$, \\
and weakly* in $L^\infty(Q_T)$,
\end{tabular}
\\ \label{eq:vk1}
v_n:=- \nabla \Phi_n + \Phi_{xn} + u_n f_{xn}
\to - \nabla \Phi + \Phi_{x} + u f_{x} \quad \text{weakly in 
$L^2(\Omega)$}
\end{gather}
We claim that \begin{multline} \iint_{Q_T} \frac{\varphi(t)}{\max(u, \delta)}
| - \nabla \Phi + \Phi_{x} + u f_{x}|^2 \cd x \cd t
\\ \leq \liminf_{n\to \infty}
\iint_{Q_T} \frac{\varphi(t)}{\max(u_n, \delta)}
| - \nabla \Phi_n + \Phi_{xn} + u_n f_{xn}|^2 \cd x \cd t
.\label{f:leon}\end{multline} Then, taking into account~\eqref{eq:sp-conv}, we can pass to the 
limit in~\eqref{eq:sp4} obtaining
\begin{multline*}
- \iint_{Q_T} E \varphi'(t) \cd x \cd t
\le
- \iint_{Q_T} \frac{\varphi(t)}{\max(u, \delta)} (u f)^2 \cd x \cd t
\\
- \iint_{Q_T} \frac{\varphi(t)}{\max(u, \delta)}
| - \nabla \Phi + \Phi_{x} + u f_{x}|^2 \cd x \cd t
.
\end{multline*}
On the set $\{(x, t) \in Q_T \colon u(x, t) = 0\}$ we have $uf_x = 0$ (by 
virtue of~\eqref{eq:f1d}), $\Phi_x = 0$ and $\Phi = 0$, whence also $\nabla 
\Phi = 0$ a.~e.\ on this set.  Thus, we can write
\begin{multline*}
- \iint_{Q_T} E \varphi'(t) \cd x \cd t
\le
- \iint_{Q_T} \frac{\varphi(t)}{\max(u, \delta)} (u f)^2 \cd x \cd t
\\
- \iint_{u > 0} \frac{\varphi(t)}{\max(u, \delta)}
| - \nabla \Phi + \Phi_{x} + u f_{x}|^2 \cd x \cd t
\end{multline*}
Letting $\delta \to 0$, by Beppo Levi's theorem we obtain the energy 
inequality.

To prove the technical claim \eqref{f:leon}, we use a variant of the Banach-Alaoglu theorem in varying $L^2(\rd\mu^n)$ spaces:
\begin{lemma}
\label{lem:variant_banach_alaoglu_vector_fields}
Let $\mathcal O\subset \R^N$ be an open set, $\mu_n$ a sequence of finite non-negative Radon measures narrowly converging to $\mu$, and $v_n$ a sequence of vector fields on $\mathcal O$.
If
$$
\|v_n\|_{L^2(\mathcal O,\rd\mu_n)}\leq C,
$$
then there exists $v\in L^2(\mathcal O,\rd\mu)$ such that, up to extraction of some subsequence,
\begin{equation} \label{leo1}
\forall \,\boldsymbol\zeta\in\mathcal C^\infty_c(\mathcal O):\qquad \lim\limits_{n\to\infty}\int_{\mathcal O}v_n\cdot \boldsymbol\zeta\, \rd \mu_n=\int_{\mathcal O}v\cdot \boldsymbol\zeta\, \rd \mu
\end{equation}
and
\begin{equation} \label{leo2}
\|v\|_{L^2(\mathcal O,\rd\mu)}\leq \liminf\limits_{n\to\infty}\| v_n\|_{L^2(\mathcal O,\rd\mu_n)}.
\end{equation} \end{lemma}

The proof of this fact by optimal transport techniques can be found in \cite{AGS06}; this lemma also follows from a variant of the Banach-Alaoglu theorem \cite[Proposition 5.3]{KMV16A}.
We will apply this lemma with $\mathcal O=Q_T$, $v_n$ from \eqref{eq:vk1}, and the sequence of measures $\rd\mu_n(t,x):=\frac{\varphi(t)}{\max(u_n, \delta)}\cd x\cd t$, which converges narrowly to $\rd\mu(t,x)=\frac{\varphi(t)}{\max(u, \delta)}\cd x\cd t$ due to the strong convergence \eqref{eq:mery}.
Extracting a subsequence if needed, we see that there is a vector-field $v\in L^2(\mathcal O,\rd\mu)$ verifying \eqref{leo1} and \eqref{leo2}.
On the other hand, by \eqref{eq:mery} and \eqref{eq:vk1},  $$v_n \frac{\varphi(t)}{\max(u_n, \delta)}\to (- \nabla \Phi + \Phi_{x} + u f_{x}) \frac{\varphi(t)}{\max(u, \delta)}$$ weakly in $L^1(Q_T)$.
Evoking \eqref{leo1}, we find that 
$$
\int_{\mathcal O} v \cdot \boldsymbol\zeta \cd\mu	=\int_{\mathcal O} (- \nabla \Phi + \Phi_{x} + u f_{x})\cdot \boldsymbol\zeta\cd\mu 
$$ for all test functions $\boldsymbol \zeta$.
By density, we conclude that $v=- \nabla \Phi + \Phi_{x} + u f_{x}$ in $L^2(\mathcal O,\rd\mu)$, and \eqref{f:leon} follows from \eqref{leo2}.

Inequality~\eqref{eq:init-entropy} is proved in the same way 
as~\eqref{eq:init-energy} given that it holds for the approximate solutions.

Inequalities \eqref{eq:sp-d}--\eqref{eq:sp-c} follow from correspondent 
inequalities for approximate solutions (Lemmas~\ref{lem:l1c} 
and~\ref{lem:l1cr}), as we obviously have
\begin{gather*}
\left.
\begin{aligned}
(u_n(t) - u_c)^{\pm} & \to (u(t) - u_c)^{\pm}
\\
(u_n(t) - \hat u_n(t))^+ & \to (u(t) - \hat u(t))^+
\end{aligned}
\right\}
\quad \text{in } L^1(\Omega),\ \forall t \ge 0
,
\end{gather*}
where the approximations $\hat u_n$ are constructed in the same way as $u_n$.

Contraction~\eqref{eq:sp-c} implies the uniqueness of~$u$.

To obtain the upper bound~\eqref{eq:linfty}, we define $c \le 0$ 
by~\eqref{eq:uc-1} and thus have $u^0 \le u_c$ on~$\Omega$, whence in view of 
contraction~\eqref{eq:sp-d},
\begin{equation*}
u(x, t) \le u_c(x), \quad (x, t) \in \Omega \times (0, \infty)
.
\end{equation*}
Recalling the formula~\eqref{eq:uc} for the norm of~$u_c$, we obtain the upper 
bound.

To obtain the lower $L^1$-bound~\eqref{eq:lower}, we take $u_c = m$ 
in~\eqref{eq:sp-e}, obtaining
\begin{align*}
\| u(t) \|_{L^1(\Omega)}
& \ge
\| \min(u(t), m) \|_{L^1(\Omega)}
= \int_\Omega (m - (u(t) - m)^-) \cd x
\\
& \ge \int_\Omega (m - (u^0 - m)^-) \cd x
= \| \min(u(t), m) \|_{L^1(\Omega)}
,
\end{align*}
as required.
\end{proof}

\subsection{Nonnegative initial data}
\label{ss:nid}

If initial data~\eqref{eq:p3} is only nonnegative, we approximate it with 
positive functions and reuse the proof of Theorem~\ref{lem:sp} to establish the 
existence of solutions to~\eqref{eq:p1}--\eqref{eq:p3} as stated in 
Theorem~\ref{th:ex} (but not uniqueness, owing to the loss of contraction).

\begin{proof}[Proof of Theorem~\ref{th:ex}]
Take a decreasing sequence $\varepsilon_n \to 0$ and set
\begin{equation*}
u^0_n = u^0 + \varepsilon_n
.
\end{equation*}
By Theorem~\ref{lem:sp}, there exists a weak solution $u_n$ of 
\eqref{eq:p1}--\eqref{eq:p3} with the initial data~$u^0_n$.  
Contraction~\eqref{eq:sp-c} ensures the comparison principle for this sequence 
of solutions, whence $u_{n + 1} \le u_n$ a.~e.\ in $\Omega \times (0, 
\infty)$.  Consequently, there exists the monotone limit $u \in 
L^\infty(\Omega \times (0, \infty))$ and moreover, we obviously have the 
convergences~\eqref{eq:sp-conv}.  From this moment on, the proof copies that 
of Theorem~\ref{lem:sp} except that~\eqref{eq:sp-d} and~\eqref{eq:sp-e} hold 
almost everywhere rather then everywhere.
\end{proof}

We conclude by proving Theorems~\ref{th:eep} and~\ref{th:convergence}.

\begin{proof}[Proof of Theorem~\ref{th:eep}]
Let $D = \left\{(x,\Phi(x,u))\colon x\in \Omega, u>0 \right\}$ and consider the function $\Xi \colon 
D \to [0, \infty)$ implicitly defined by the equation
\begin{equation*}
\Phi(x, \Xi(x, \phi)) = \phi
.
\end{equation*}
As $\Phi$ is monotonous with respect to its second argument, $\Xi$ is uniquely 
defined.  Clearly, $\Xi$ is $C^2$.

Fix $u \in U$.  We claim that there exists a sequence of functions $\Phi_n \in 
C(\overline \Omega) \cap C^\infty(\Omega)$ such that
\begin{gather*}
(x, \Phi_n(x)) \in D \quad (x \in\Omega),
\\
\Phi_n \to \Phi(\cdot, u(\cdot)) \quad \text{in } H^1 \text{and a.~e.\ in } 
\Omega
\end{gather*}
Indeed, take a sequence $\{\delta_n\}$, where $\delta_n > 0$ and $\delta_n \to 
0$, put $\widetilde \Phi_n(x) = \Phi(x,u(x)) + \delta_n$, and let $\widetilde 
\Phi_n^\varepsilon$ be the mollification of~$\widetilde \Phi_n$.  Observe 
that~$\widetilde \Phi_n$ is strictly positive and so is $\widetilde 
\Phi_n^\varepsilon$. It suffices to show that for any $n$ sufficiently large there 
exists $\varepsilon_n > 0$ such that whenever $\varepsilon < \varepsilon_n$, 
we have
\begin{equation}
\label{eq:eep-11}
\{ (x, \widetilde \Phi_n^\varepsilon(x)) \colon x \in \Omega \} \subset D
.
\end{equation}

If the second alternative in~\eqref{eq:large-u} holds, for every $x\in \Omega$ we have \begin{equation*} \Phi(x,u)=\Phi(x,1) +\int_u^1 \xi f_u(x, \xi) \cd \xi\ge \Phi(x,1) +\int_u^1 f_u(x, \xi) \cd \xi=\Phi(x,1) + f(x,1)-f(x,u)\to +\infty\end{equation*} as $u\to +\infty$. This implies that $D = 
\Omega\times(0, \infty)$, so~\eqref{eq:eep-11} obviously holds with any~$\varepsilon$.

Assume the first alternative in~\eqref{eq:large-u}.  Take $\xi_0 \ge \|
u \|_{L^\infty(\Omega)}$ such that $f(x, \xi)$ does not depend on~$x$ if $\xi 
\ge \xi_0$ and set
\begin{equation*}
a = - \int_{\xi_0}^{\xi_0 + 1} u f_u(x, u) \cd x > 0
.
\end{equation*}
We have:
\begin{align*}
\Phi(x, \xi_0 + 1) - \widetilde \Phi_n(x)
& = \Phi(x, \xi_0 + 1) - \Phi(x, u(x)) - \delta_n
\\
& \ge \Phi(x, \xi_0 + 1) - \Phi(x, \xi_0) - \delta_n
= a - \delta_n
.
\end{align*}
Thus, for large $n$ we have

\begin{align*}
\widetilde \Phi_n(x) \leq \Phi(x, \xi_0 + 1) - \frac a 2
.
\end{align*} Upon mollification, \begin{align*}
\widetilde \Phi^\varepsilon_n(x) \leq \Phi^\varepsilon(x, \xi_0 + 1) - \frac a 2
.
\end{align*}
For a fixed $n$, the function $ \Phi(\cdot, \xi_0 + 1)$ is continuous on~$\overline 
\Omega$, so the mollifications $ \Phi^\varepsilon(\cdot, \xi_0 + 1)$ converge to 
it uniformly on $\overline \Omega$ as $\varepsilon \to 0$.  
Consequently, \begin{equation}
(x,\widetilde \Phi^\varepsilon_n(x) ) \in \{
(x, \phi) \in \Omega \times (0, \infty) \colon \phi \le \Phi(x, \xi_0 + 1)
\}
\subset D
\end{equation} for all $x\in \Omega$, proving~\eqref{eq:eep-11}.

Taking  a sequence $\{\Phi_n\}$ as above, we can set $u_n(x) = \Xi(x, 
\Phi_n(x))$, so that $\Phi_n(x) = \Phi(x, u_n(x))$.  Clearly, $u_n \in 
C^2(\Omega)$ and $u_n > 0$.  Further, the sequence $\{u_n\}$ is bounded in 
$L^\infty(\Omega)$ because so is $\{\Phi_n\}$, and due to the continuity of 
$\Xi$ we have
\begin{equation*}
u_n \to u \quad \text{a.~e.\ in } \Omega
.
\end{equation*}
As a consequence, for $f_n = f(x, u_n(x))$ and $E_n = E(x, u_n(x))$ we have
\begin{equation}
\label{eq:eep-conv}
\left.
\begin{aligned}
u_n & \to u \\
u_n f_n & \to uf \\
u_n f_{xn} & \to uf_x \\
\Phi_{xn} & \to \Phi_x \\
E_n & \to E
\end{aligned}
\right\}
\begin{tabular}{l}
a.~e.\ in  $\Omega$ \\
and in any $L^p(\Omega)$, $1 \le p < \infty$
,
\end{tabular}
\end{equation}
where we write $f$ for $f(\cdot, u(\cdot))$, etc.  In particular, there is no 
loss of generality in assuming a lower bound
\begin{equation*}
\| u_n \|_{L^1(\Omega)} \ge c
:= \frac12 \inf_{u \in U} \| u \|_{L^1(\Omega)} > 0
\end{equation*}
(positivity by virtue of~\eqref{eq:eep-a}), where $c$ is obviously independent 
not only of~$u_n$ but of $u$ as well.

Define
\begin{equation*}
\widetilde U
= \{ w \in C^1(\Omega) \colon w > 0, \| w \|_{L^1(\Omega)} \ge c\}
.
\end{equation*}
By Theorem~\ref{th:eepab}, there exist a function
\begin{equation*}
v(\xi) = \frac{\xi^2}{\max(\xi, \sigma)},
\end{equation*}
where $\sigma > 0$, and a constant $C > 0$ such that
\begin{equation*}
\int_\Omega E(x, w(x)) \cd x
\le C
\int_\Omega v(w(x)) (f(x, w(x)) + | \nabla f(x, w(x)) |^2) \cd x
\quad (w \in \widetilde U)
.
\end{equation*}
In particular, as $u_n \in \widetilde U$, we see that
\begin{equation}
\label{eq:eep1}
\int_\Omega E_n \cd x
\le C
\int_\Omega v_n (f_n + | \nabla f_n |^2) \cd x
,
\end{equation}
where $v_n = v(u_n(x))$.

Let us check that we can pass to the limit in~\eqref{eq:eep1}.  First, it 
follows from~\eqref{eq:eep-conv} that
\begin{equation*}
\int_\Omega E_n \cd x \to \int_\Omega E \cd x
.
\end{equation*}
Next, note that we clearly have
\begin{equation*}
\frac{1}{\max(u_n, \sigma)}
\to
\frac{1}{\max(u, \sigma)}
\quad \text{a.~e.\ in $\Omega$ and weakly* in $L^\infty(\Omega)$}
\end{equation*}
and thus, again using~\eqref{eq:eep-conv}, we obtain
\begin{equation*}
\int_\Omega v_n f_n^2 \cd x
= \int_\Omega \frac{1}{\max(u_n, \sigma)} (u_n f_n)^2 \cd x
\to \int_\Omega \frac{1}{\max(u, \sigma)} (u f)^2 \cd x
.
\end{equation*}
Finally, as $u_n$ is smooth and positive, we can write
\begin{align*}
\int_\Omega v_n | \nabla f_n |^2 \cd x
& = \int_\Omega \frac{1}{\max(u_n, \sigma)} | - \nabla \Phi_n + \Phi_{xn} + 
u_n f_{xn} |^2 \cd x
\\
& \to \int_\Omega \frac{1}{\max(u, \sigma)} | - \nabla \Phi + \Phi_{x} + u 
f_{x} |^2 \cd x
.
\end{align*}
On the set $[u = 0]$ we have $uf_x = 0$ by \eqref{eq:f1d}, $\Phi_x = 0$, and 
$\Phi = 0$, the last equality implying $\nabla \Phi = 0$ a.~e.\ on $[u = 0]$.  
Thus, we can write
\begin{align*}
\int_\Omega v_n | \nabla f_n |^2 \cd x
\to \int_{[u > 0]} \frac{1}{\max(u, \sigma)} | - \nabla \Phi + \Phi_{x} + u 
f_{x} |^2 \cd x
.
\end{align*}
To sum up, we have
\begin{equation*}
\int_\Omega E \cd x
\le
C
\left(
\int_\Omega \frac{u^2}{\max(u, \sigma)} f^2 \cd x
+ \int_{[u > 0]} \frac{1}{\max(u, \sigma)} | - \nabla \Phi + \Phi_{x} + u 
f_{x} |^2 \cd x
\right)
,
\end{equation*}
which is even stronger than~\eqref{eq:eep-b}.
\end{proof}

\begin{proof}[Proof of Theorem~\ref{th:convergence}]
Let $U \subset L^\infty_+$ be the set of functions such that for any $v \in 
U$, we have $\Phi(\cdot, v(\cdot)) \in H^1(\Omega)$ and 
$\|v\|_{L^1(\Omega)} \ge c$.  By Theorem~\ref{th:eep} we have the 
entropy-entropy production inequality~\eqref{eq:eep-b} for~$U$.

Let $u$ be a weak solution of~\eqref{eq:p1}--\eqref{eq:p3} with the initial 
data satisfying~\eqref{eq:convergence-b}.  It follows from the lower 
$L^1$-bound~\eqref{eq:lower} that $u(t) \in U$ for a.~a.\ $t > 0$.  Combining 
the entropy dissipation and entropy-entropy production inequalities, we obtain
\begin{equation*}
\partial_t \entropy(u(t)) \le - C_U^{-1} \entropy(u(t)) \quad \text{a.~a. } t > 0
.
\end{equation*}
Letting $e(t) = \entropy(u(t)) \mathrm{e}^{C_U^{-1} t}$, we see that $\partial_t 
e(t) \le 0$ in the sense of measures, whence $e$ a.~e.\ coincides with a 
nonincreasing function.  Moreover,
\begin{equation*}
\esssup_{t > 0} e(t)
= \esslimsup_{t \to 0} e(t)
= \esslimsup_{t \to 0} \entropy(u(t)) \mathrm{e}^{C_U^{-1}t}
\le \entropy(u^0)
\end{equation*}
by virtue of~\eqref{eq:init-entropy}, so $e(t) \le \entropy(u^0)$ for a.~a.\ 
$t > 0$, yielding \eqref{eq:convergence-a} with $\gamma = C_U^{-1}$.
\end{proof}

\subsection*{Acknowledgment} The idea of this paper originated from conversations of the second author with Goro Akagi and Yann Brenier during a stay at ESI in Vienna. He would like to thank Goro Akagi and Yann Brenier for the inspiring discussions and correspondence, Ulisse Stefanelli for the invitation to the thematic program \emph{Nonlinear Flows} at ESI, and ESI for hospitality. The research was partially supported by the Portuguese Government through FCT/MCTES and by the ERDF through PT2020 (projects UID/MAT/00324/2019, PTDC/MAT-PUR/28686/2017 and TUBITAK/0005/2014).

\subsection*{Conflict of interest: none}

\end{document}